\documentclass[11pt]{amsart}
\usepackage[T1]{fontenc}
\usepackage{float}
\usepackage{graphicx}
\usepackage[printonlyused]{acronym}
\usepackage{hyperref}
\usepackage{xcolor}
\usepackage[margin=0.8in]{geometry}
\usepackage{amsmath}
\usepackage{amsthm}
\usepackage{multicol}
\usepackage{bm}
\usepackage[]{graphicx}
\usepackage{subfig}
\usepackage{color}
\usepackage{amssymb, epsfig}
\usepackage{url}
\usepackage{hyperref}
\usepackage{bbm}    
\usepackage{mathrsfs}
\newtheorem{theorem}{Theorem}[section]
\newtheorem{lemma}[theorem]{Lemma}
\newtheorem{Corollary}[theorem]{Corollary}

\newtheorem{proposition}[theorem]{Proposition}

\usepackage[utf8]{inputenc}
\usepackage{xcolor}
\usepackage{bbm}
\usepackage[style=numeric, citestyle=numeric-comp]{biblatex}
\DefineBibliographyStrings{english}{
  in = {} 
}

\DeclareFieldFormat[article]{title}{#1}
\DeclareFieldFormat[book]{title}{#1}
\DeclareFieldFormat[inbook]{title}{#1}
\DeclareFieldFormat[incollection]{title}{#1}
\DeclareFieldFormat[inproceedings]{title}{#1}
\DeclareFieldFormat[thesis]{title}{#1}
\DeclareFieldFormat[misc]{title}{#1} 
\DefineBibliographyStrings{english}{
  in = {} 
}

\DeclareFieldFormat{labelnumber}{[#1]} 

\DeclareFieldFormat{labelnumber}{#1}
\DeclareFieldFormat{pages}{#1} 
\DeclareFieldFormat[article]{volume}{\textbf{#1}}

\title{ Long-time behavior for systems of Fisher-KPP type with interacting components}
\author{Alexandra Stavrianidi$^*$}
\thanks{$^*$Institute for Analysis and Numerics, University of M\"unster, Germany; alex.st@uni-muenster.de}
\date{}
\begin{document}
\keywords{Reaction–diffusion systems, branching Brownian motion, Fisher–KPP equation, traveling waves, logarithmic delay, front propagation}

\subjclass{Primary 35K57; Secondary 35C07, 35B40, 60J80}

\maketitle
\begin{abstract}
We study the long-time behavior of a triangular system of Fisher--KPP type with $k$ interacting components, associated with a reducible multitype branching Brownian motion with $k$ types of particles. For this cascading system, we prove convergence in shape of each component to the minimal-speed Fisher--KPP traveling wave and determine the front asymptotics up to the constant order. This yields a PDE proof of Conjecture 1.2 from \cite{belloumpaper} on the convergence in distribution of the centered maximum particle in a cascading branching Brownian motion. We also derive asymptotic front-location estimates for such systems with general Fisher--KPP nonlinearities.

\end{abstract}

\section{Introduction}

We study the long-time behavior of a triangular system of Fisher–KPP type equations with interacting components,

\begin{equation} \label{cascadinggeneralf}
\begin{split}
&v^{i}_{t}= v^{i}_{xx}+f(v^{i})+\alpha v^{i+1}\left(1-v^{i}\right) \text{ for }  i \in \{1,\ldots,k-1\}, \quad{ t>0, x \in \mathbb{R},} \\
& v^{k}_{t}= v^{k}_{xx}+f(v^{k}),  \quad{ t>0, x \in \mathbb{R}},\\
&v^{i}(0,x)=v^{i}_{0}(x)  \text{ for }  i \in \{1,\ldots,k\}, x \in \mathbb{R}.
\end{split}
\end{equation}

where the reaction term $f \in C^{2}([0,1])$ is of Fisher-KPP type, i.e.

\begin{equation} \label{fkppconditions}
f(0)=f(1)=0, \quad f^{\prime}(0)>0, \quad f^{\prime}(1)<0, \quad  0 < f(u) \le f'(0)u, \quad u \in (0,1).
\end{equation}
We consider initial conditions $v^{i}_0(x)$ that are compact perturbations of the Heaviside step function, i.e., there exist $x_1,x_2$ so that $v^{i}(0,x)=1$ for all $x \leq x_1$, and $v^{i}(0,x)=0$ for all $x \geq x_2$ with $x_1 \leq x_2$ and $0\leq v^{i}(0,x) \leq 1$ for all $x \in \mathbb{R}$. Then we have $0<v^{i}(t,x)<1 $ for all $t>0$ and we interpret $v^{i}$ as population densities, allele frequencies, or chemical concentrations. The system is monotone and models a mutualistic interaction between the species, as the presence of $v^{i+1}$ enhances the growth of $v^{i}$ for $1 \leq i \leq k-1$. 

The motivation to study this system comes from a reducible multitype cascading branching Brownian motion (BBM) introduced in \cite{mallein_belloum, belloumpaper} in which particles of type $i$ produce particles of type $i+1$ for $1 \leq i \leq k-1$, while particles of type $k$ only produce particles of type $k$. In the special case $f(v)=v-v^2$, system \eqref{cascadinggeneralf} describes the evolution of the McKean functionals associated with this cascading BBM. The main goal of this paper is to characterize the long-time behavior of system \eqref{cascadinggeneralf}, and provide a PDE proof of Conjecture 1.2 from \cite{belloumpaper} on the convergence in distribution of the centered maximum particle in this cascading BBM.

We begin by providing some preliminaries on the Fisher-KPP equation and its connection with branching Brownian motion and then we state our main results.

\subsection{Preliminaries on the Fisher-KPP equation and branching Brownian motion}

The equation for $v^{k}$ in system \eqref{cascadinggeneralf} is the classic Fisher-KPP equation, introduced in 1937 by Fisher to model the spread of advantageous genes \cite{Fisher}. Kolmogorov, Petrovskii and Piskunov studied the long-time behavior of this equation \cite{KPPpaper}, and since then the dynamics have been understood in great detail. The stable colonized state $1$ invades the unstable uncolonized state $0$, as the region where $u \approx 1$ displaces the region $u \approx 0$. In one dimension, this is described mathematically by the following theorem on the existence of traveling waves connecting the two equilibria.

\begin{theorem}{(\cite{KPPpaper})} \\
The Fisher-KPP equation admits traveling wave solutions of the form $v(t,x)=U_{c}(x-ct)>0$ satisfying the boundary conditions $
U_{c}(-\infty)=1, U_{c}(+ \infty)=0
$ for all speeds $c \geq c_{*}=2 \sqrt{f'(0)}$. Moreover, $U_{c}$ is decreasing and unique up to translation.
\end{theorem}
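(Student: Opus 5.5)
Substituting $v(t,x)=U(x-ct)$ into $v_t=v_{xx}+f(v)$ gives the second-order ODE
\[
U''+cU'+f(U)=0,\qquad U(-\infty)=1,\ U(+\infty)=0,
\]
which I will treat as a planar system $U'=V$, $V'=-cV-f(U)$ and analyze by phase-plane methods. The equilibria are $(0,0)$ and $(1,0)$. Linearizing at $(1,0)$, the eigenvalues solve $\lambda^2+c\lambda+f'(1)=0$; since $f'(1)<0$ this is a saddle. Its one-dimensional unstable manifold has a branch entering the quadrant $\{U<1,V<0\}$. Linearizing at $(0,0)$ gives $\lambda^2+c\lambda+f'(0)=0$, with roots $\lambda_\pm=\tfrac{-c\pm\sqrt{c^2-4f'(0)}}{2}$. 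For $c<c_*=2\sqrt{f'(0)}$ these are complex, so the origin is a stable focus and any trajectory tending to it spirals, forcing $U$ to change sign. This gives necessity: no positive wave exists for $c<c_*$. Speeds $c\le 0$ are excluded as well, for example by multiplying the ODE by $U'$ and integrating, which gives $c\int (U')^2=-\int_0^1 f>0$, hence $c>0$. For $c\ge c_*$ the roots are real and negative, so the origin is a stable node.

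\emph{Existence for $c\ge c_*$.} I will build a positively invariant triangle
\[
T=\{(U,V):0\le U\le 1,\ -\mu U\le V\le 0\}
\]
with $\mu$ chosen appropriately, taking $\mu=c/2$ or $\mu=-\lambda_+$. I then check the vector field on each side. On $V=0$, $0<U<1$, we have $V'=-f(U)<0$, so the flow enters $T$. On $U=1$ with $V<0$, $U'<0$, so again the flow enters. The key side is $V=-\mu U$: the flow points inward iff $V'\ge -\mu U'$, i.e. $-cV-f(U)\ge \mu\mu U$ after substituting $V=-\mu U$, which reduces to
\[
c\mu U-f(U)\ge \mu^2U .
\]
Using $f(U)\le f'(0)U$, it suffices that $\mu^2-c\mu+f'(0)\le0$, which holds for some real $\mu$ exactly when $c^2\ge 4f'(0)$, i.e. $c\ge c_*$. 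I expect this to be the main step, and it is where the hypothesis $f(u)\le f'(0)u$ is used.

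\emph{Conclusion of existence.} The unstable-manifold branch from $(1,0)$ enters $T$ and is trapped there. Inside $T$ we have $U'=V\le 0$, so $U$ is monotone. There is no other equilibrium in $T$, and monotone bounded trajectories converge to equilibria, so the branch converges to $(0,0)$. Equivalently one can use the Lyapunov function $E=V^2/2+F(U)$, which satisfies $E'=-cV^2$. This gives a heteroclinic orbit with $U$ decreasing and $0<U<1$; strictness follows because the orbit is not an equilibrium, so $U'<0$ in the interior of $T$.

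\emph{Uniqueness up to translation.} Any decreasing wave with $U(-\infty)=1$ must leave $(1,0)$ along its unstable manifold. That manifold has a single branch in $\{U<1\}$, so the trajectory is unique and the wave is determined up to a shift of the independent variable. For an arbitrary positive wave, $U'<0$ holds in any case: a critical point would force, via the ODE and $f>0$, a local minimum with $U<1$, which is incompatible with the limits together with a maximum-principle argument. So the uniqueness argument covers every positive wave.
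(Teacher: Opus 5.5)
Your argument is correct. It is the standard phase-plane proof: a saddle at $(1,0)$, and a trapping triangle bounded by $V=0$, $U=1$ and $V=-\mu U$ with $\mu^2-c\mu+f'(0)\le 0$, which is exactly where $c\ge c_*$ and $f(u)\le f'(0)u$ enter. The paper gives no proof of this statement and simply quotes it from \cite{KPPpaper}, so there is no in-paper argument to compare with.

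A few details need tidying.

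\begin{itemize}
\item The energy identity has a sign slip. Multiplying by $U'$ and integrating gives $c\int_{\mathbb{R}}(U')^2\,d\xi=\int_0^1 f(u)\,du>0$, not $-\int_0^1 f$.

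\item Your critical-point argument is stated backwards. If $U'(\xi_0)=0$ with $0<U(\xi_0)<1$, the ODE gives $U''(\xi_0)=-f(U(\xi_0))<0$, which is a strict local \emph{maximum}, not a minimum. The limit $U(-\infty)=1>U(\xi_0)$ then forces an interior local minimum on $(-\infty,\xi_0)$. There again $U'=0$ and $U''=-f(U)<0$, and that is the contradiction. Also, $U$ cannot touch $1$, since $(1,0)$ is an equilibrium.

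\item The invariance of the triangle needs one more line in the tangent case. At $c=c_*$, and for $\mu=-\lambda_+$ at any $c$, one has $c\mu-\mu^2=f'(0)$ exactly. On the side $V=-\mu U$ the field can then be tangent wherever $f(U)=f'(0)U$, and a non-strict \emph{points inward} condition does not by itself give invariance. Fix it as follows: $g=V+\mu U$ satisfies $g'=(\mu-c)g+\big[(c\mu-\mu^2)U-f(U)\big]\ge(\mu-c)g$ for $0\le U\le 1$, so $g$ cannot become negative.

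\item Non-existence for $c<c_*$ is not part of the statement. If you keep it, justify the spiraling through the angle equation, $\theta'=-(f'(0)U^2+cUV+V^2)/r^2+O(r)$. Its leading term is bounded above by a negative constant when $c^2<4f'(0)$, so a positive trajectory cannot converge to the origin.
\end{itemize}
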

Traveling wave solutions of the Fisher-KPP equation satisfy the ODE problem
\begin{equation}
\begin{split}
&U_{c}''+cU_{c}'+f(U_{c})=0 \\
&U_{c}(-\infty)=1, U_{c}(+\infty)=0.
\end{split}
\end{equation}

Their asymptotics (after fixing the translation) are the following: 

For $c>c_{*}$, there exists $k_{c}>0$ and $\delta>0$ such that for $\lambda_{c}=\frac{c-\sqrt{c^{2}-4f'(0)}}{2},$

\begin{equation*}
U_{c}(x)=k_{c} e^{-\lambda_{c} x}+O\left(e^{-\left(\lambda_{c}+\delta\right) x}\right) \text { as } x \rightarrow+\infty.
\end{equation*}

and for the minimal speed $c_{*}$, there exists $k_{*} \in \mathbb{R}$ such that for $\lambda_{*}=\frac{c_{*}}{2},$

\begin{equation} \label{travelingwaveasymptotics}
U_{c_{*}}(x)=\left(x+k_{*}\right) e^{-\lambda_{*} x}+O\left(e^{-\left(\lambda_{*}+\delta\right) x}\right) \text { as } x \rightarrow+\infty. 
\end{equation}

The minimal speed traveling wave is special because it attracts the solutions to Fisher-KPP with front-like initial data.

\begin{theorem} {(\cite{KPPpaper})} \label{convergenceinshape}
\newline 
Let $v(t, x)$ be the solution to the Fisher-KPP equation with  $v(0,x)=\mathbbm{1}_{\{x \leq 0\}}$. There exists a frame $m(t)$ with $m(t)=c_{*}t+o(t)$ such that $
v(t, x+m(t)) \rightarrow U_{c_{*}}\left(x\right) \text { as } t \rightarrow+\infty, \text { uniformly.}$
\end{theorem}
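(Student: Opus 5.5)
The plan is the classical Kolmogorov--Petrovskii--Piskunov argument: fix the level $1/2$ to define the frame, prove that the profile becomes steeper as time goes on, and show that any limit profile must be a traveling wave, hence $U_{c_*}$.

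\emph{Setup and monotonicity in $x$.} For the Heaviside datum $v(0,\cdot)=\mathbbm{1}_{\{x\le 0\}}$, the comparison principle gives $0<v<1$ for $t>0$. Comparing $v(t,\cdot)$ with its shifts $v(t,\cdot+h)$, $h>0$, shows that $x\mapsto v(t,x)$ is strictly decreasing, with $v(t,-\infty)=1$ and $v(t,+\infty)=0$. So for each $t>0$ there is a unique $m(t)$ with $v(t,m(t))=1/2$. We set $w(t,x)=v(t,x+m(t))$.

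\emph{Steepness comparison.} This is the heart of the proof. Let $\phi$ be any solution with decreasing front-like data, for instance a translate of $U_c$ or a time shift $v(s,\cdot)$. At $t=0$, $v$ is steeper than $\phi$: any translate of $\phi$ crosses $v(0,\cdot)$ exactly once. By the zero-number (Sturm) argument for the linear parabolic equation satisfied by the difference of two solutions, this single crossing persists for all $t>0$.

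KPP phrase this as follows. Write $x=X(t,u)$ for the inverse of $v$ and compare $|v_x|$ at equal levels $u$. Then $|v_x|(t,X(t,u))$, and its analogue for the time-shifted solution, is monotone in $t$. Taking $\phi=v(s,\cdot)$ shows that $p(t,u):=|v_x(t,X(t,u))|$ is monotone in $t$ for each fixed $u\in(0,1)$.

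We also need a bound on $p$. Comparing with $U_{c_*}$, namely that $v$ is steeper than $U_{c_*}$ at every time, gives $p(t,u)\ge |U_{c_*}'|(U_{c_*}^{-1}(u))$ in the appropriate direction. Parabolic estimates give a uniform bound on the other side. Hence $p(t,u)$ converges as $t\to\infty$ to some limit $P(u)$.

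\emph{Identification of the limit.} By parabolic regularity, $w(t,\cdot)$ is precompact in $C^2_{loc}$. Combined with the convergence of $p$, this shows that the level-set parametrization of every limit profile is the same function $P$. Hence all subsequential limits coincide and equal a single decreasing profile $W$ with $W(0)=1/2$, and $w(t,\cdot)\to W$ locally uniformly.

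Next we show $m(t)/t\to c_*$. The upper bound $m(t)\le c_*t+C$ follows from the supersolution $\min(1,Ce^{-\lambda_*(x-c_*t)})$, using $f(u)\le f'(0)u$. The lower bound $m(t)\ge (c_*-\varepsilon)t$ follows from compactly supported subsolutions built from the linearization, namely eigenfunctions on large intervals moving at speed $c_*-\varepsilon$. Because $p$ converges, $\dot m(t)=-v_t/v_x$ at level $1/2$ converges, necessarily to $c_*$.

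Passing to the limit in $w_t=w_{xx}+\dot m\, w_x+f(w)$ then gives $W''+c_*W'+f(W)=0$ with $W(\pm\infty)\in\{0,1\}$. The limits at $\pm\infty$ are identified using monotonicity and the fact that the only equilibria are $0$ and $1$. By the uniqueness part of the preceding theorem, $W=U_{c_*}$ (normalized so that $U_{c_*}(0)=1/2$).

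Finally, we upgrade local uniform convergence to uniform convergence. Monotonicity of both $w$ and $U_{c_*}$, together with their common limits $1$ and $0$ at $\mp\infty$, give this directly: for large $|x|$ both are within $\varepsilon$ of the limit values.

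\emph{Main obstacle.} The main obstacle is the steepness comparison and the resulting monotonicity of $p(t,u)$. The rest is compactness and uniqueness. I would handle it via the intersection-number lemma: for $z=v-\phi(\cdot+h)$, which solves $z_t=z_{xx}+c(t,x)z$ with bounded $c$, the number of sign changes of $z$ is nonincreasing in $t$. This is applied to every translate simultaneously, with care at the discontinuity of the initial datum, handled by starting at a small positive time.
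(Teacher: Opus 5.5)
The paper does not prove this statement; it quotes it from Kolmogorov--Petrovskii--Piskunov (and Rothe for general front-like data). Your outline is the classical KPP steepness argument: a single crossing with every translate and time shift, monotonicity of $p(t,u)$ in $t$ with the lower bound $p\ge |U_{c_*}'|\circ U_{c_*}^{-1}$, and identification of the limit as the minimal-speed wave. The zero-number principle plays the role of KPP's Sturm-type intersection lemma, so the argument is correct in substance.

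The one step that does not deliver what you claim as written is the lower bound $m(t)\ge (c_*-\varepsilon)t$. A small compactly supported subsolution only gives $v\ge\delta$ near $(c_*-\varepsilon)t$, not $v\ge 1/2$. You can fix this with a hair-trigger/ODE boost to reach level $1/2$. More simply, once $W$ is a monotone wave of speed $c=\lim\dot m$ connecting $1$ to $0$, the nonexistence of such waves for $c<c_*$ forces $c\ge c_*$, and your supersolution bound then gives $c=c_*$.
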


This theorem was proven for more general front-like initial data in \cite{rothe}. The frame $m(t)$ where the solution is bounded away from $0$ and $1$ at time $t$ is referred to as the equation front. The asymptotics of $m(t)$ are of great interest as they help us track the progress of the invasion. In \cite{KPPpaper} the asymptotics were derived up to linear order, but in \cite{Bramson, bramson2} Bramson showed that the invasion lags logarithmically in time, \textit{i.e.} there exists a constant $x_{\infty}$ that depends on the initial data such that $m(t)=c_{*}t-\frac{3}{2 \lambda_{*}} \ln t+x_{\infty}+o(1)$.  This proof relied on the connection between the Fisher-KPP equation and branching Brownian Motion.
The beautiful connection between Fisher-KPP and branching Brownian motion was discovered independently by McKean, Skorohod, Ikeda, Nagasawa and Watanabe in  \cite{McKean, skorohod,watanabe}.

Branching Brownian motion (BBM) is a process that begins with a single particle at position $x$ at time $0$, performing a Brownian motion and carrying an exponentially distributed clock. When that clock rings, the particle splits into $k$-many particles with probability $p_{k}$, and each “child” particle performs an independent copy of this process. At time $t$, there is a cloud of exponentially many particles on average and questions arise on how BBM invades its environment. Consider the BBM functional $v(t, x)=\mathbb{E}_{x}\left(\prod_{k=1}^{\mathcal{N}_{t}} g\left(X_{k}(t)\right)\right)$ for a $g$ bounded, where $\mathcal{N}_t$ denotes the set of particles alive at time $t$ and $X_{k}(t)$ denote their positions. Then, $u(t,x)=1-v(t,x)$ solves the Fisher-KPP equation with initial condition $u(0,x)=1-g(x)$ and nonlinearity 
$f(u)= \beta (1-u) - \beta \sum_{k=1}^{\infty} p_{k}(1-u)^{k}$, where $\beta$ is the branching rate. These are the so-called McKean nonlinearities, and they are a strict subclass of Fisher-KPP nonlinearities \cite{votingmodels2}.

Prescribing  $u(0,x)=\mathbbm{1}_{\{x \leq 0\}}$ as the initial condition for $u$, so that $g(x)= \mathbbm{1}_{\{x \geq 0\}}$  we have that $u(t,x)= \mathbb{P}_{0}\left( \max_{k \in \mathcal{N}_t} X_{k}(t) \geq x\right)$. Therefore the cumulative distribution function of the rightmost particle $M(t)=\max_{k \in \mathcal{N}_t} X_{k}(t)$ for a BBM starting at $0$ satisfies the Fisher-KPP equation with the Heaviside initial condition. Hence Theorem $\ref{convergenceinshape}$ implies that $M(t)-m(t)$ converges in law to a random variable whose distribution is given by the traveling wave $U_{c_*}$. Moreover, $m(t)$ is the median location of the maximum particle. Consequently, probabilistic methods can also be applied to determine the asymptotics of $m(t)$, as Bramson did to obtain the $\frac{3}{2}\ln(t)$ delay. This $\frac{3}{2}\ln(t)$ is known as the Bramson correction and it is universal, as it appears in numerous other contexts, such as in the extreme value theory of log-correlated fields \cite{arguin}.

Aiming to study the long-time behavior of the Fisher-KPP equation with analytic tools, Hamel, Nolen, Roquejoffre, and Ryzhik provided a PDE proof of the Bramson correction \cite{short_proof} which holds for all  Fisher-KPP nonlinearities. Later, Nolen, Roquejoffre, and Ryzhik refined the asymptotics of $m(t)$ up to order $O(\ \frac {1}{t^{1-\gamma}})$ for any $\gamma>0$ and uncovered the algebraic rate of convergence to the traveling wave \cite{refined_asymptotics}. In 2019, Graham determined the asymptotics of $m(t)$ up to $O(\frac{1}{t})$, and showed that below that order, the asymptotics of $u$ can't be described as shifts of the wave \cite{precise_asymptotics}. The sharpest asymptotics (normalizing $f'(0)=1$) are

\begin{equation}
m(t):=2 t-\frac{3}{2} \ln t+ C_{1}-\frac{3 \sqrt{\pi}}{\sqrt{t}}+ \frac{9}{8}(5-6 \log 2) \frac{\log t}{t}+\frac{C_{2}}{t}
\end{equation}

where the constants $C_1,C_2$ depend on the initial data, while the universal coefficients were previously predicted by Ebert and van Saarloos \cite{ebertvansaarloos, vansaarloos} and Berestycki, Brunet, and Derrida \cite{berestyckibrunetderrida}. Our paper extends this line of research to Fisher-KPP systems with interacting components.

\subsection{Main results}

The main result of the paper concerns the location of the front of the $v^{i}$ component of system \eqref{cascadinggeneralf} with $f(v)=v-v^2$.

\begin{theorem} \label{convergencetothewavecascading}
Let $\{v^{i}\}_{i=1}^{k}$ solve system~\eqref{cascadinggeneralf} with $f(v)=v-v^2$.  
Then the component $v^{i}$ converges in shape to the minimal-speed Fisher--KPP traveling wave $U_{c_{*}}(x)$, $c_{*}=2$ in the following sense:  
there exists a constant $x^{i}_{\infty} \in \mathbb{R}$, depending on the initial data, and a moving frame $m^{i}(t)$ such that
\[
m^{i}(t) = 2t - \tfrac{3}{2}\ln t + (k-i)\ln t -x^{i}_{\infty}+ o(1),
\quad \text{as } t \to \infty,
\]
and
\[
\lim_{t \to \infty} v^{i}(t, x + m^{i}(t)) = U_{c_{*}}(x)
\quad \text{uniformly in } x \text{ on compact sets}.
\]

In particular, $x^{i}_{\infty}=-\ln(\alpha^{i}_{\infty})=-\ln(\frac{\alpha^{k-i}}{(k-i)!})-x^{k}_{\infty}$, where $x^{k}_{\infty}$ is the constant correction of the frame of the single Fisher-KPP equation.
\end{theorem}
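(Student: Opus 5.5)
Proceed by downward induction on $i$, from $i=k$ to $i=1$, working in the self-similar variables used by Hamel--Nolen--Roquejoffre--Ryzhik \cite{short_proof}. The case $i=k$ is the single Fisher--KPP equation: by \cite{Bramson, short_proof}, $v^k(t,x+2t-\tfrac32\ln t+x^k_\infty)\to U_{c_*}(x)$. More precisely, in the moving frame $x\mapsto x+2t$, the function $e^{x}v^k$ behaves in the diffusive zone $x\sim\sqrt t$ like $C\,x\,t^{-3/2}e^{-x^2/4t}$, with a constant $C$ tied to $e^{-x^k_\infty}$. For $i<k$, set $w^i(t,x)=e^{x}v^i(t,x+2t)$. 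The linearized equation for $v^i$ ahead of its front, $v^i_t=v^i_{xx}+v^i+\alpha v^{i+1}$, becomes
\[
w^i_t=w^i_{xx}+\alpha w^{i+1}+(\text{nonlinear terms}).
\]
This is a heat equation forced by the next component. Each forcing step multiplies the amplitude by $\alpha t$ and divides by the counting factor, so
\[
w^i\approx \frac{(\alpha t)^{k-i}}{(k-i)!}\,w^k .
\]
This gives the extra $(k-i)\ln t$ shift and the constant $\alpha^i_\infty=\alpha^{k-i}e^{-x^k_\infty}/(k-i)!$.

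\textbf{Step 1 (explicit computation for the linear cascade).} Since the linear system is triangular and the forcing has the same Gaussian profile, one can check directly that $t^{j}/j!$ times a heat-equation solution $h$ solves $\partial_t u-u_{xx}=t^{j-1}/(j-1)!\,h$. Iterating, the solution of the linear cascade on a half-line (Dirichlet condition at $x=\text{front}$, as in \cite{short_proof}) with source $w^k$ is
\[
\frac{(\alpha t)^{k-i}}{(k-i)!}\,w^k+\text{lower order}.
\]
The lower-order terms come from the initial data and the transient of $w^{k}$, and are $O(t^{k-i-1})$ relative to the leading term.

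\textbf{Step 2 (sub- and supersolutions).} This is done in the HNRR style. A supersolution is obtained from the linearized problem, since $f(v)\le v$ and $1-v^i\le1$. For a subsolution, the nonlinearities are controlled by a Dirichlet boundary placed at $x=-\tfrac{3}{2}\ln t+(k-i)\ln t+\text{const}$ in the moving frame, together with the bound $v^i,v^{i+1}\lesssim$ the linear bound there. This yields
\[
m^i(t)=2t-\tfrac32\ln t+(k-i)\ln t+O(1).
\]
Convergence along subsequences to a traveling wave, together with the fact that the profile near the front is a multiple of $x e^{-x}$ (minimal speed, tail of $U_{c_*}$), then gives convergence in shape. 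Here one uses that the coupling term $\alpha v^{i+1}(1-v^i)$ is negligible at the $v^i$ front: relative to $v^i$ it is $O(\alpha t^{-1})$, because $v^{i+1}$ is a factor $t$ smaller there. The limit equation is therefore pure Fisher--KPP.

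\textbf{Step 3 (pinning the constant).} Following the refinement in \cite{refined_asymptotics}, match the diffusive-zone asymptotics $e^{x}v^i\sim \alpha^i_\infty t^{k-i}\,x\,t^{-3/2}e^{-x^2/4t}$ with the wave tail $(x+k_*)e^{-x}$. This reads off the shift $x^i_\infty=-\ln\alpha^i_\infty$, yielding the $o(1)$ statement and the formula $x^i_\infty=-\ln(\alpha^{k-i}/(k-i)!)-x^k_\infty$.

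\textbf{Main obstacle.} The hard step is Step 3. One must show that the perturbative errors do not affect the constant order. These errors come from the nonlinearity, from the factor $(1-v^i)$, and from the $O(t^{k-i-1})$ corrections of Step 1. Each must be shown to contribute $o(t^{k-i})$ to the amplitude in the region $x\sim\sqrt t$.

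I would first try bootstrapping down the induction. The refined $o(1)$ front estimate for $v^{i+1}$ gives forcing errors of relative size $O(t^{-1/2+\epsilon})$. Integrated in time, these errors remain $o(1)$ relative to the leading factor $t^{k-i}/(k-i)!$, since the forcing integral is dominated by late times $s\sim t$.
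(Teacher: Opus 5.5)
Your heuristic for the constant (the forced cascade producing the factor $(\alpha t)^{k-i}/(k-i)!$) matches the paper's principal-mode ODEs $q^i_\tau+(k-i)q^i=\alpha q^{i+1}$, and your supersolution argument matches the paper's upper bound. However, Steps 2 and 3 each have a genuine gap.

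\textbf{Step 2.} The subsolution you describe does not work for $i\le k-2$.
\begin{itemize}
\item Placing the Dirichlet boundary at the front does not make the nonlinearity negligible. There $v^i=O(1)$, and the linearized Dirichlet solution is $O(1)$ near the boundary for all times. So the error $f'(0)U-f(U)\le MU^2$ is not integrable in time, and the $\gamma(t)$ device of \cite{short_proof} degenerates ($\gamma\to0$).
\item The HNRR route puts the boundary further ahead, where the linear solution decays like $t^{-3/2}$, and then reaches the front with a traveling-wave subsolution $\widetilde U(x-\xi_i(t))$. Here, however, $\xi_i(t)=2t+(k-i-\tfrac32)\ln t$ moves faster than $2t$. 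This produces the term $-\tfrac{k-i-3/2}{t}\widetilde U'>0$, which has the wrong sign, and the forcing $\alpha v^{i+1}(1-\widetilde U)$ does not obviously absorb it.
\item This is exactly the obstruction the paper records after Theorem \ref{cascadingupperbound}. Your $O(1)$ lower bound, and with it the subsequential compactness you build on, is therefore not established.
\end{itemize}

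\textbf{Step 3.} ``Matching'' the diffusive-zone asymptotics with the wave tail is not a valid step on its own.
\begin{itemize}
\item The self-similar asymptotics hold for $\eta=x/\sqrt t\sim1$, with errors that are not small relative to $\eta$ when $x=O(1)$. So the two regimes do not overlap.
\item Subsequential convergence with bounded shifts does not exclude an oscillating shift.
\item Your bootstrap only controls the amplitude in the diffusive zone, not its transfer to the front.
\end{itemize}
The missing idea is an intermediate scale $x\sim t^\gamma$, handled in two stages.
\begin{itemize}
\item First, the paper proves for the nonlinear problem that $p^i(\tau,\eta)=(\alpha^i_\infty+o(1))\eta_+e^{-\eta^2/4}+O(e^{-(1/2-\gamma')\tau})e^{-\eta^2/6}$. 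It uses barriers whose Dirichlet boundaries sit at $x=\mp t^\gamma$ in the log-shifted frame, where the nonlinear terms are super-exponentially small; this is what replaces your Step 2. The resulting asymptotics remain valid at $x=t^\gamma$.
\item Second, it solves the equations on $x\le t^\gamma$ with boundary data $(\alpha^i_\infty\pm\varepsilon)t^\gamma e^{-t^\gamma}(1+o(1))$. It shows the difference from $e^xU_{c_*}(x+\zeta^i_\pm(t))$ tends to zero via the supersolution $t^{-\lambda}\cos(x/t^{\gamma+\delta})$. This works because the Dirichlet eigenvalue $\sim t^{-2\gamma}$ dominates the $O(t^{-1+\gamma'})$ drift and coupling errors when $\gamma<1/3$.
\end{itemize}
At that scale the sign of the drift term is irrelevant, which is how the obstruction in your Step 2 is bypassed and how the single limiting shift $x^i_\infty=-\ln\alpha^i_\infty$ is pinned down.
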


Interestingly, the coupling parameter $\alpha$ does not enter the logarithmic correction for the front of $v^{i}$, but appears in the choice of the specific translation of the traveling wave that $v^{i}$ converges to. The long-time behavior of the $v^{1}$ component of the cascading system is connected to the asymptotic location of the maximum particle of the associated cascading branching Brownian motion. Theorem \ref{convergencetothewavecascading} provides a PDE proof for Conjecture 1.2 from \cite{belloumpaper}.
Theorem \ref{convergencetothewavecascading} can be generalized to $C^2$ Fisher-KPP nonlinearities with $f'(v) \leq f'(0), v \in (0,1).$
\\ 

For general Fisher-KPP nonlinearities satisfying \eqref{fkppconditions} we also show results on the asymptotic location of the fronts. In the case $k=2$, system \eqref{cascadinggeneralf} becomes

\begin{equation} \label{mysystem}
\begin{split}
& u_{t}= u_{x x}+f(u)+\alpha v (1-u), \quad{t>0, x \in \mathbb{R}}, \\
& v_{t} =v_{x x}+f(v), \quad{ t>0, x \in \mathbb{R}}, \\
& u(0,x)=u_{0}(x), v(0,x)=v_{0}(x), \quad{ x \in \mathbb{R}}.
\end{split}
\end{equation}

\begin{theorem} \label{levelsets}
Let $(u,v)$ be the solution to the system~\eqref{mysystem}, and let $E_{m}(t)=\{x>0:\, u(t,x)=m\}$. 
For every $m \in (0,1)$, there exists $C \ge 0$ such that
\[
E_{m}(t) \subset 
\Big[c_{*} t-\tfrac{1}{2\lambda_{*}} \ln t - C,\,
      c_{*} t-\tfrac{1}{2\lambda_{*}} \ln t + C \Big]
\quad \text{for all sufficiently large } t,
\]
where $c_{*}=2\sqrt{f'(0)}$ and $\lambda_{*}=\sqrt{f'(0)}$.
\end{theorem}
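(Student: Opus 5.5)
We will prove the two inclusions by comparison, working in the moving frame $x = c_* t - \frac{1}{2\lambda_*}\ln t + y$. We use that $f$ is KPP, so $f(u)\le f'(0)u$, and that system \eqref{mysystem} is monotone.

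\textbf{Input from the $v$ equation.} The $v$ equation is a scalar Fisher--KPP equation. By the results of Bramson and Hamel--Nolen--Roquejoffre--Ryzhik \cite{short_proof}, $v$ has its front at
\[
\bar m(t)=c_*t-\tfrac{3}{2\lambda_*}\ln t+O(1).
\]
We will also need the sharper upper bound coming from the linearized Dirichlet problem in the frame $\bar m(t)$:
\[
v(t,\bar m(t)+z)\le C(1+z)e^{-\lambda_* z}\,e^{-z^2/Ct}, \qquad z\ge 0.
\]
A matching lower bound holds on $z\in[1,\sqrt t]$. The heuristic is then as follows. In $u$, the forcing $\alpha v(1-u)$ adds mass $\alpha\int_0^t v\,ds$ to the linearized equation $w_t=w_{xx}+f'(0)w$. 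Conjugating by $e^{\lambda_*(x-c_*t)}$ turns this into a heat equation with a source of size roughly $t^{3/2}\cdot(\text{local mass})$. The $u$ front is found where $(x-c_*t)e^{-\lambda_*(x-c_*t)}\,t^{?}$ is of order one. The $v$ mass at height $z$ grows like $t^{3/2}\cdot t^{-3/2}=O(1)$ uniformly in time. Integrating this source in time therefore gains a factor $t$ in the conjugated variable relative to the Bramson profile $t^{-3/2}$, which turns the $\tfrac32\ln t$ delay into $\tfrac12\ln t$. This is the same mechanism that produces the $(k-i)\ln t$ shifts in Theorem \ref{convergencetothewavecascading}.

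\textbf{Upper bound.} Since $f(u)\le f'(0)u$ and $1-u\le 1$, $u$ is a subsolution of the linear inhomogeneous equation
\[
w_t=w_{xx}+f'(0)w+\alpha v .
\]
We write $w$ by Duhamel's formula as the linear evolution of $u_0$ plus $\alpha\int_0^t e^{(t-s)(\partial_{xx}+f'(0))}v(s)\,ds$. For the second term we bound $v(s,\cdot)$ by the global linear bound $v(s,x)\le C e^{-\lambda_*(x-c_*s)}\min\bigl(1,\,s^{-3/2}(1+(x-c_*s)_+)\bigr)$, together with the trivial bound $v\le1$ behind the front. The estimate is then an explicit Gaussian computation. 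Evaluated at $x=c_*t-\frac1{2\lambda_*}\ln t+y$, $y\ge C$, it gives $w\le C'e^{-\lambda_* y}$. Using the Gaussian factor this improves to $w\le C'(1+y)e^{-\lambda_* y}$ for $y$ up to $\sqrt t$, with Gaussian decay beyond.

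A simple $L^\infty$-type linear bound is too crude in the region behind the $u$ front. To handle it we apply the bound only for $y\ge 0$ and note that $u<1$ there. The conclusion is that $u(t,c_*t-\frac1{2\lambda_*}\ln t+y)<m$ for all $y\ge C(m)$ and all large $t$.

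\textbf{Lower bound.} This is the main obstacle. We would follow the subsolution strategy of \cite{short_proof}. On the moving half-line $x\ge c_*t-\frac1{2\lambda_*}\ln t$, set $u=e^{-\lambda_*(x-\cdot)}\phi$ and use $u(1-u)$-type control: $f(u)\ge f'(0)u-Mu^2$, and $1-u\ge \frac12$ where $u\le\frac12$.

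We then construct a subsolution for $\phi$ in self-similar variables $\xi=y/\sqrt t$ on $[0,\sqrt t]$, with Dirichlet condition at $y=0$. The source term coming from the lower bound on $v$ is of size $c\,t^{-3/2}\cdot t^{3/2}\cdot(\ldots)$ on $1\le z\le\sqrt t$. The subsolution should take the form
\[
\phi\ge c\,y\,e^{-y^2/4t}\quad\text{for } y\in[1,\sqrt t],
\]
with the quadratic error $Mu^2$ absorbed as in \cite{short_proof} by working slightly ahead of the front, where $u\le e^{-\lambda_* y}$ is small.

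This yields $u\ge c>0$ at $y=O(1)$. To raise the level from $c$ up to an arbitrary $m<1$, we argue as follows. The $u$ equation is Fisher--KPP with a nonnegative extra source. Once $u\ge c$ on an interval of fixed length that advances at speed $c_*$ up to logarithmic corrections, a standard spreading and comparison argument for the scalar equation $u_t\ge u_{xx}+f(u)$ lifts $u$ above $m$ within a bounded distance behind. This gives $E_m(t)\subset[\,c_*t-\frac1{2\lambda_*}\ln t-C,\infty)$.

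The delicate step is the lower bound on the source. We must show that $\int_0^t v$, propagated by the linear semigroup, produces enough mass. This requires lower bounds on $v$ ahead of its own front, over the window $z\in[1,\sqrt s]$ and uniformly in $s\in[t/2,t]$. It is the one place where more than the $O(1)$ front location of $v$ is needed, and we would extract it from the subsolution in \cite{short_proof}, which provides exactly this profile.
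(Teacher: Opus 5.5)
\textbf{The upper bound fails as written.} Comparing $u$ with the whole-line solution $w$ of $w_t=w_{xx}+f'(0)w+\alpha v$ is legitimate. The trouble is that $w$ exceeds $u$ by a factor of order $t^{1/2}$ at the claimed front. Write $w(t,c_*t+z)=e^{-\lambda_* z}\phi(t,z)$. The forced part of $\phi$ then solves the free heat equation $\phi_t=\phi_{zz}+\alpha e^{\lambda_* z}v(t,c_*t+z)$.

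Ahead of $c_*s$ the source behaves like $z\,s^{-3/2}e^{-z^2/4s}$. It therefore injects mass of order $s^{-1/2}$ per unit time at $z\sim\sqrt s$, and in the free problem that mass diffuses back to $z=O(1)$ without loss. For the model source $\alpha z_+s^{-3/2}e^{-z^2/4s}$ one computes exactly
\[
\phi(t,0)=\frac{\alpha}{\sqrt{\pi}\,t}\int_0^t s^{-1/2}(t-s)^{1/2}\,ds=\frac{\alpha\sqrt{\pi}}{2}.
\]
By \cite{short_proof}, $v(s,c_*s+z)\ge\kappa\, z e^{-\lambda_* z}s^{-3/2}$ for $0\le z\le\sigma\sqrt s$. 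So the true forced term, and a fortiori the one built from your upper bound on $v$, is bounded below by a positive constant independent of $t$. Hence
\[
w\bigl(t,c_*t-\tfrac{1}{2\lambda_*}\ln t+y\bigr)\gtrsim t^{1/2}e^{-\lambda_* y},
\]
and your Duhamel argument only yields $\max E_m(t)\le c_*t+C$.

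Your heuristic (gaining a factor $t$ over $t^{-3/2}$) is really a Dirichlet heuristic. Restricting the evaluation to $y\ge 0$ does not rescue it: the excess comes from paths that pass near or behind the $u$-front at intermediate times, where the true solution is capped by $u\le 1$. The missing idea is to build that cap into the comparison function as an absorbing boundary. The paper does exactly this:
\begin{itemize}
\item it imposes a zero Dirichlet condition on $c_*t-\frac{1}{2\lambda_*}\ln(t+t_0)$;
\item it forces the linearized $u$-equation by the Dirichlet solution for $v$ in the frame $c_*t-\frac{3}{2\lambda_*}\ln(t+t_0)$, evaluated $\frac{1}{\lambda_*}\ln(t+t_0)$ further ahead;
\item Proposition \ref{asymptoticslinearlemma} shows the resulting solution keeps the profile $xe^{-\lambda_* x}[Ce^{-x^2/4(t+t_0)}+h]$, with neither growth nor decay;
\item it then uses $\min(1,BV^{1})$, set to $1$ behind a fixed point, as a supersolution. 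This is valid precisely because $u\le 1$ and $BV^{2}$ dominates the shifted $v$-component.
\end{itemize}

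\textbf{The lower-bound sketch has a related defect.} Suppose the Dirichlet boundary of your linear subsolution sits at the $u$-front, or only $O(1)$ ahead of it. Then the linear solution there does not decay in time; this is exactly the content of Proposition \ref{asymptoticslinearlemma}. The factor $\gamma$ needed to absorb $Mu^2$ then satisfies $\gamma'\approx -M\gamma^2$, so it decays like $1/t$ and costs a full $\frac{1}{\lambda_*}\ln t$. ``Slightly ahead'' must therefore mean more than $\frac{1}{\lambda_*}\ln t$ ahead, so that the linear solution decays integrably in time. But then the subsolution is small there and cannot directly give $u\ge c$ at $y=O(1)$.

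The paper handles this in two steps.
\begin{enumerate}
\item It puts the boundary at $c_*t+\frac{1}{\lambda_*}\ln(t+t_0)$, where $U=O(t^{-3/2})$ (Proposition \ref{lemmanewframes}). It controls the forcing through $\gamma V\le \widetilde v/2$ from \cite{short_proof}; this is where the lower bound on $v$ ahead of its own front, which you correctly identified, enters. The result is only $u(t,c_*t+\xi)\gtrsim \xi e^{-\lambda_*\xi}t^{-1/2}$ at $\xi=\sigma\sqrt t$.
\item It then compares $u$ on $[0,c_*t+\sigma\sqrt t]$ with $\widetilde U_{c_*}(x-c_*t+\frac{1}{2\lambda_*}\ln t+x_2)$, the minimal-speed wave for a modified nonlinearity $f_1$ vanishing at $(1+m)/2$. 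Its tail matches that boundary value, and the extra term $\frac{1}{2\lambda_* t}\widetilde U_{c_*}'\le 0$ makes it a subsolution. This choice of $f_1$ handles every level $m$ at once, so no separate spreading argument is needed.
\end{enumerate}
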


We note that this system can be associated with a two-type BBM with particles that have the same diffusivities and branching rates.  The asymptotics that are uncovered in \cite{belloumpaper} for the median location of the maximum particle of this two-type BBM are in line with our Theorem \ref{levelsets}  on the location of the front of $u$ in \eqref{mysystem}. We note, however, that our result holds for all initial conditions that are compact perturbations of the Heaviside function and for all Fisher-KPP nonlinearities, including those that cannot be associated with a branching process.

We also obtain an upper bound on the front location of each component $v^{i}$ in the cascading system~\eqref{cascadinggeneralf} with a general Fisher-KPP nonlinearity satisfying \eqref{fkppconditions}.

\begin{theorem} \label{cascadingupperbound}
Let $\{v^{i}\}_{i=1}^{k}$ be the solution to the system~\eqref{cascadinggeneralf}.  
For any $m \in (0,1)$, let $E^{i}_{m}(t)=\{x>0:\, v^{i}(t,x)=m\}$.  
Then there exists $C \geq 0$ such that
\begin{equation} \label{maxlevelsetcascading}
\max E^{i}_{m}(t) 
\le c_{*}t - \frac{\big(\tfrac{3}{2}-k+i\big)}{\lambda_{*}} \ln t + C, 
\quad \text{for all sufficiently large } t,
\end{equation}
where $c_{*}=2\sqrt{f'(0)}$ and $\lambda_{*}=\sqrt{f'(0)}$.
\end{theorem}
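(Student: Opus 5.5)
We argue by downward induction on $i$, starting from $i=k$, using supersolutions built from the linearized problem. The key simplification is that $v^{i}$ only needs an upper bound. Since $f(v)\le f'(0)v$ and $1-v^{i}\le 1$, each component satisfies
\[
v^{i}_t \le v^{i}_{xx}+f'(0)v^{i}+\alpha v^{i+1}.
\]
Moving to the frame $x\mapsto x+c_*t$ and conjugating by $e^{-\lambda_* x}$ removes both the drift and the linear growth. Writing $v^{i}(t,x+c_*t)=e^{-\lambda_* x}w^{i}(t,x)$, we get
\[
w^{i}_t\le w^{i}_{xx}+\alpha w^{i+1}.
\]
This is a cascade of heat equations with sources, and we also have the trivial bound $v^{i}\le 1$.

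\textbf{Base case $i=k$.} For $v^{k}$ this is the single Fisher--KPP equation. The standard argument of \cite{short_proof} gives a Dirichlet-type supersolution $\bar w^{k}$ on $x\ge \lambda_*^{-1}\tfrac32\ln t$ (or, equivalently, Bramson's upper bound). Concretely, $w^{k}$ is bounded by a multiple of $(x+C)\,t^{-3/2}e^{-x^2/(Ct)}$ for $x\ge 0$ in a suitably shifted frame. The shift $-\tfrac{3}{2\lambda_*}\ln t$ is built into that frame.

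\textbf{Induction step.} Assume that in the frame $\xi=x-c_*t+\tfrac{3/2-(k-i-1)}{\lambda_*}\ln t$ we have
\[
v^{i+1}\le C(1+\xi_+)e^{-\lambda_*\xi}\,\Phi(\xi/\sqrt t)+Ce^{-\lambda_*\xi}\mathbbm 1_{\xi<0}.
\]
In $w$-variables this reads $w^{i+1}(t,x)\lesssim t^{-3/2+(k-i-1)}(x+1)e^{-x^2/Ct}$ for $x\ge 0$. We now treat the source $\alpha w^{i+1}$ in the heat inequality for $w^{i}$ by Duhamel. The source roughly has a $t^{-3/2}$-type Dirichlet profile multiplied by $t^{k-i-1}$. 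Integrating it in time against the heat kernel (with an absorbing boundary near the line where $v^{i}\le 1$ becomes the binding constraint) produces one extra power of $t$: since $\int_0^t s^{k-i-1}\,ds\sim t^{k-i}/(k-i)$, this also matches the $\alpha^{k-i}/(k-i)!$ of Theorem~\ref{convergencetothewavecascading}. Hence $w^{i}(t,x)\lesssim t^{-3/2+(k-i)}(x+1)e^{-x^2/Ct}$. Undoing the conjugation gives $v^{i}(t,x+c_*t)\le m$ once $x\ge -\tfrac{(3/2-k+i)}{\lambda_*}\ln t+C$, which is \eqref{maxlevelsetcascading}.

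\textbf{Making this rigorous.} We will construct explicit supersolutions on a moving half-line $x\ge L(t)$, where $L(t)$ is the level $-\tfrac{(3/2-k+i)}{\lambda_*}\ln t$ up to a constant. We then compare using the parabolic maximum principle for the scalar linear inequality for $v^{i}$. This uses that $v^{i+1}$ enters only as a known source, so no system comparison is needed. On the left boundary we use $v^{i}\le 1$, and the supersolution is arranged to be $\ge1$ there after multiplying by a large constant. The initial data are compactly supported on the right, so for $t\ge 1$ they are dominated by the Gaussian profile. The candidate supersolution is
\[
\bar w^{i}=A\,t^{k-i}\,\frac{x-L(t)+1}{(t+1)^{3/2}}\,e^{-(x-L(t))^2/(8t)}\quad \text{plus a correction}.
\]

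\textbf{Main obstacle.} The main obstacle is the moving boundary. The term $L'(t)\partial_x\bar w$ is of order $\ln$-correction$/t$, and the source term must be absorbed with the right power $t^{k-i}$. The factor $t^{k-i}$ in $\bar w^{i}$ contributes $(k-i)t^{-1}\bar w^{i}$ to the time derivative. This has to dominate $\alpha \bar w^{i+1}$, which is of order $t^{-1}\bar w^{i}$ times $\alpha(1+o(1))$, and that holds only after enlarging $A$ relative to the constant for $i+1$. It also has to dominate the boundary drift terms. We would first check these inequalities separately in the diffusive zone $x-L\sim\sqrt t$ and in the near-boundary zone $x-L=O(1)$. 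If needed, we would replace the Gaussian by the exact Dirichlet heat kernel on the half-line, as in \cite{short_proof}, to handle the sign of the error terms.
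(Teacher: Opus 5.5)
Your scalar comparison with $v^{i+1}$ as a given source is sound, and so is the conjugated inequality $w^i_t\le w^i_{xx}+\alpha w^{i+1}$ in the single frame $x\mapsto x+c_*t$. The power counting $t^{k-i-3/2}$ and the boundary normalization $\bar w^i(t,L_i(t))\gtrsim e^{\lambda_*L_i(t)}$ are also fine.

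The gap is in the step you yourself flag. You claim that $\alpha\bar w^{i+1}$ is of order $\alpha t^{-1}\bar w^i(1+o(1))$, so that the factor $t^{k-i}$ absorbs it once $A$ is large. This is false near the absorbing line.

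\begin{itemize}
\item The absorbing line of $w^{i+1}$ is $L_{i+1}(t)=L_i(t)-\lambda_*^{-1}\ln t$. So at $x=L_i(t)+y$ with $y=O(1)$, your inductive bound reads $w^{i+1}\approx C_{i+1}t^{k-i-5/2}(1+y+\lambda_*^{-1}\ln t)$.
\item For your ansatz, every term of $(\partial_t-\partial_{xx})\bar w^i$ there (the $t^{k-i}$ factor, the $(t+1)^{-3/2}$ factor, the drift $L'(t)\partial_x\bar w^i$, the curvature) is $O(t^{-1}\bar w^i)=A\,t^{k-i-5/2}\,O(1+y)$.
\item Hence the supersolution inequality fails in the strip $0\le y\lesssim\ln t$ for all sufficiently large $t$, whatever the fixed constant $A$. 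The cap $\min(1,\cdot)$ only removes $y\lesssim\lambda_*^{-1}\ln A$.
\end{itemize}

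This is not an artifact of a lossy hypothesis. For $i+1=k$, the lower bound $v^k(t,c_*t-\frac{3}{2\lambda_*}\ln t+y')\ge\kappa y'e^{-\lambda_*y'}$ of \cite{short_proof} shows that the true source has exactly this size. Sharpening the Gaussian or the constants therefore cannot help.

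So the ``correction'' is doing essential work that the proposal does not supply. You would need one of the following:
\begin{itemize}
\item a concave corrector in the strip $y\lesssim\ln t$ whose $-\partial_{xx}$ provides the missing $t^{k-i-5/2}\ln t$, while staying of relative size $o(1)$ and preserving the form of bound needed at the next induction step; or
\item an integrated argument showing that this excess is transient. Heuristically its cumulative effect at $y=O(1)$ is only $O(\ln t/\sqrt t)$ relative to the main term.
\end{itemize}

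The paper avoids pointwise domination of the source altogether. It takes $V^i$ to be the exact solution of the coupled linear Dirichlet system in the frames $\xi_i(t)$, with coupling $\alpha V^{i+1}(t,x+\lambda_*^{-1}\ln(t+t_0))$. Then $BV^i$ is a supersolution for $\tilde v^i$ automatically, by linearity and $f(s)\le f'(0)s$, as soon as $BV^{i+1}\ge\tilde v^{i+1}$ on the shifted half-line.

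The only analytic input is then Proposition~\ref{asymptoticslinearlemma}, proved in self-similar variables. There the frame mismatch that breaks your ansatz becomes the shift $\delta(\tau)=O(\tau e^{-\tau/2})$, controlled in $L^2$ by $\delta(\tau)\|w^{i+1}_\eta\|_{L^2}$. The projections onto $e_0$ obey $q^i_\tau+(k-i)q^i\approx\alpha q^{i+1}$.

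If you fall back, as you suggest, on the exact Dirichlet problem as in \cite{short_proof}, you are led to precisely this route. The spectral and energy analysis then carries the proof, not an explicit Gaussian supersolution.
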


We note that the corresponding lower bound cannot be obtained using the same argument as in the case $k=2$.  
The technical obstruction is that $k-i-\tfrac{3}{2}>0$ for $k \ge 3$ and $i \leq k-2$, which invalidates the subsolution construction.

The proof strategy relies on analyzing the linearized system with Dirichlet boundary conditions in appropriately chosen moving frames, allowing for the construction of subsolutions and supersolutions for the nonlinear problem.  
The moving boundary is chosen so that the solution to the linearized problem neither grows nor decays in time.  
Through spectral analysis, we obtain precise asymptotics for the linearized system, which are then matched with the Fisher--KPP traveling-wave asymptotics to pinpoint the front locations of the nonlinear system.  
The arguments build on techniques developed by Hamel, Nolen, Roquejoffre, Ryzhik, and Graham \cite{convergencetothewave,precise_asymptotics,short_proof,refined_asymptotics}.

\subsection{Connections with related work on Fisher-KPP systems with interacting components}
Relevant work on Fisher-KPP systems with interacting components includes Holzer's investigation \cite{holzer_proof} of the anomalous spreading phenomenon exhibited in the system 

\begin{equation} \label{holzer}
\begin{split}
 u_{t} & =\sigma^2 u_{x x}+\beta u(1-u)+\alpha v \\
v_{t} & =v_{x x}+v(1-v) 
\end{split}
\end{equation}

In the uncoupled regime ($\alpha=0$) we have two independent Fisher-KPP equations so that $u,v$ spread with the Fisher-KPP speeds $c_u=2\sqrt{ \beta \sigma^2}$, and $c_{v}=2$ respectively. For  $\alpha>0$, and for $(\sigma^2, \beta)$ in a certain regime, the presence of species $v$ enhances the growth of $u$ so that in the long time limit, the two populations separate. Holzer showed that for $(\sigma^2, \beta)$ in said regime, $u$ spreads at a speed that exceeds the maximum of $c_u$ and $c_v$, at the so-called anomalous speed, $c_{anom}$. 
This fact is remarkable, given that $v$ is close to $0$ at $c_{anom}t$, (the front of $v$ is located far behind, at $2t-\frac{3}{2}\log t$), but it still manages to force $u$ enough to move its front at $c_{anom}t$. Faye and Peltier \cite{faye_anomalous_for_monostable} showed that this anomalous spreading persists for more general monostable nonlinearities $f$ and more general interaction terms.

Our system \eqref{mysystem} corresponds to the point $\sigma^2=\beta=1$ of the phase diagram of \cite{holzer_proof} where the boundaries intersect. It manifests the same phenomenon of population separation, but it is more subtle, as the separation appears in the sublinear order of the front asymptotics. Indeed, $u$ spreads at the Fisher--KPP minimal speed $c_{*}$.  
However, the forcing by $v$ has a subtle effect on the propagation of $u$, increasing the logarithmic correction of its front from $-\frac{3}{2 \lambda_{*}}\ln(t)$ to $-\frac{1}{2 \lambda_{*}}\ln(t)$. The fronts of $u$ and $v$ are then $\frac{1}{\lambda_{*}}\ln t$ distance apart, and separate in the long-time limit.

To the best of our knowledge, this work provides the first PDE determination of a sublinear front correction of $\tfrac{1}{2}\ln t$ in a system of reaction--diffusion equations with interacting components.  
In fact, the usual Bramson correction of $\frac{3}{2}\ln(t)$ has been shown to be universal among a large class of systems of reaction-diffusion equations with interacting components \cite{montiepaper}. Interesting works on systems with interacting components that exhibit the usual Bramson correction  include \cite{threecomponentreactiondiffusion, longtimesismodel} which study a farmer-hunter-gatherer (three component) model and a SIS epidemiological model respectively. Moreover, for irreducible multi-type branching Brownian motions, convergence to traveling wave solutions with the classical Bramson correction has been established in \cite{extremalprocessofirreduciblebbm}.  In that sense, this triangular system is special.

Other interesting works on coupled systems of Fisher-KPP type that exhibit anomalous spreading phenomena include \cite{lotkavolterrainvasion}, \cite{hartungadvancingpopulation}, \cite{holzelandscheel}.

\noindent\textbf{Organization of the paper.}
The paper is organized as follows. In Section \ref{Section 2} we establish the connection between the system~\eqref{cascadinggeneralf} with $f(v)=v-v^2$ and a cascading branching Brownian motion, providing a probabilistic interpretation of Theorem~\ref{convergencetothewavecascading}.  
Section \ref{Section 3} contains some PDE heuristics for the derivation of the logarithmic correction. Section \ref{section 4} contains the proofs of Theorems \ref{levelsets} and \ref{cascadingupperbound} on the long-time front asymptotics for the systems~\eqref{mysystem} and~\eqref{cascadinggeneralf} with general Fisher-KPP nonlinearities.  
Section \ref{Section 4} is devoted to the proof of Theorem~\ref{convergencetothewavecascading} with $f(v)=v-v^2$ and its implication for the cascading branching Brownian motion.

\section{Connection with multitype branching Brownian motion} \label{Section 2}
We will now describe a connection between the cascading system of equations \eqref{cascadinggeneralf} and a cascading branching Brownian motion, as defined in \cite{belloumpaper}.

We consider $k$-many types of particles on $\mathbb{R}$. All types of particles move according to a Brownian motion with variance $\sigma^2=2$.  Particles of type $i \in \{1,\ldots,k-1\} $ branch at rate $1$ into two particles of type $i$ and at rate $\alpha$ into one particle of type $i$ and one particle of type $i+1$.
Particles of type $k$ only branch at rate $1$ into two particles of type $k$.\\

\begin{figure}[h!]
\centering
\begin{minipage}{0.20\textwidth}
  \includegraphics[width=\linewidth]{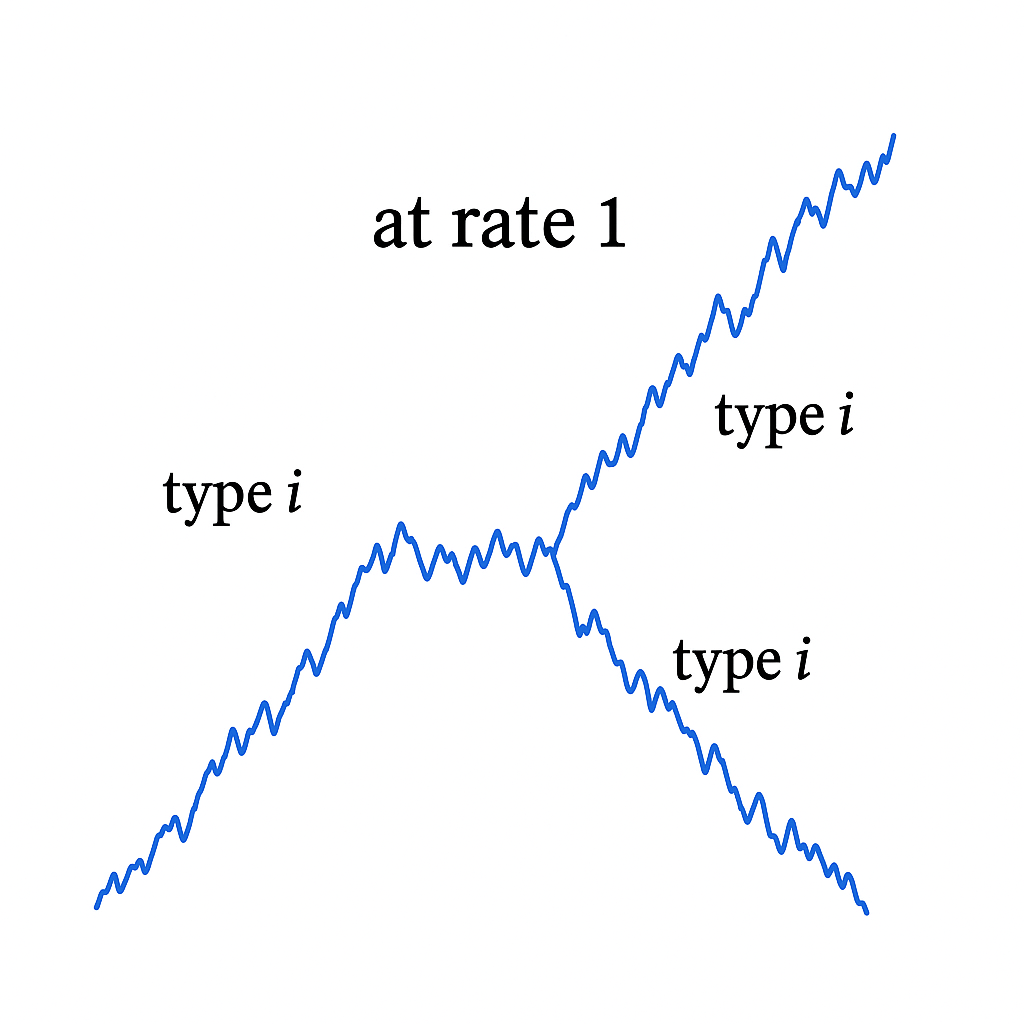}
\end{minipage}\hfill
\begin{minipage}{0.20\textwidth}
  \includegraphics[width=\linewidth]{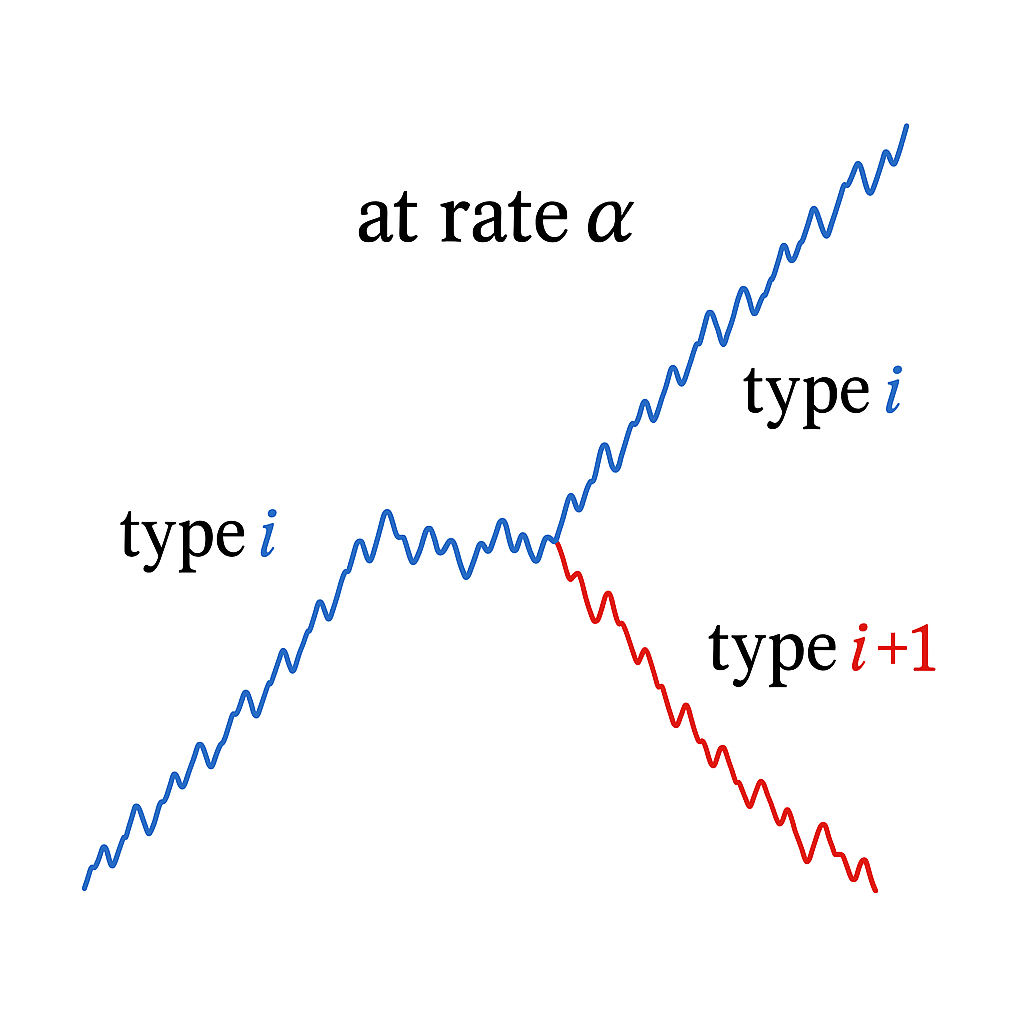}
\end{minipage}\hfill
\begin{minipage}{0.20\textwidth}
  \includegraphics[width=\linewidth]{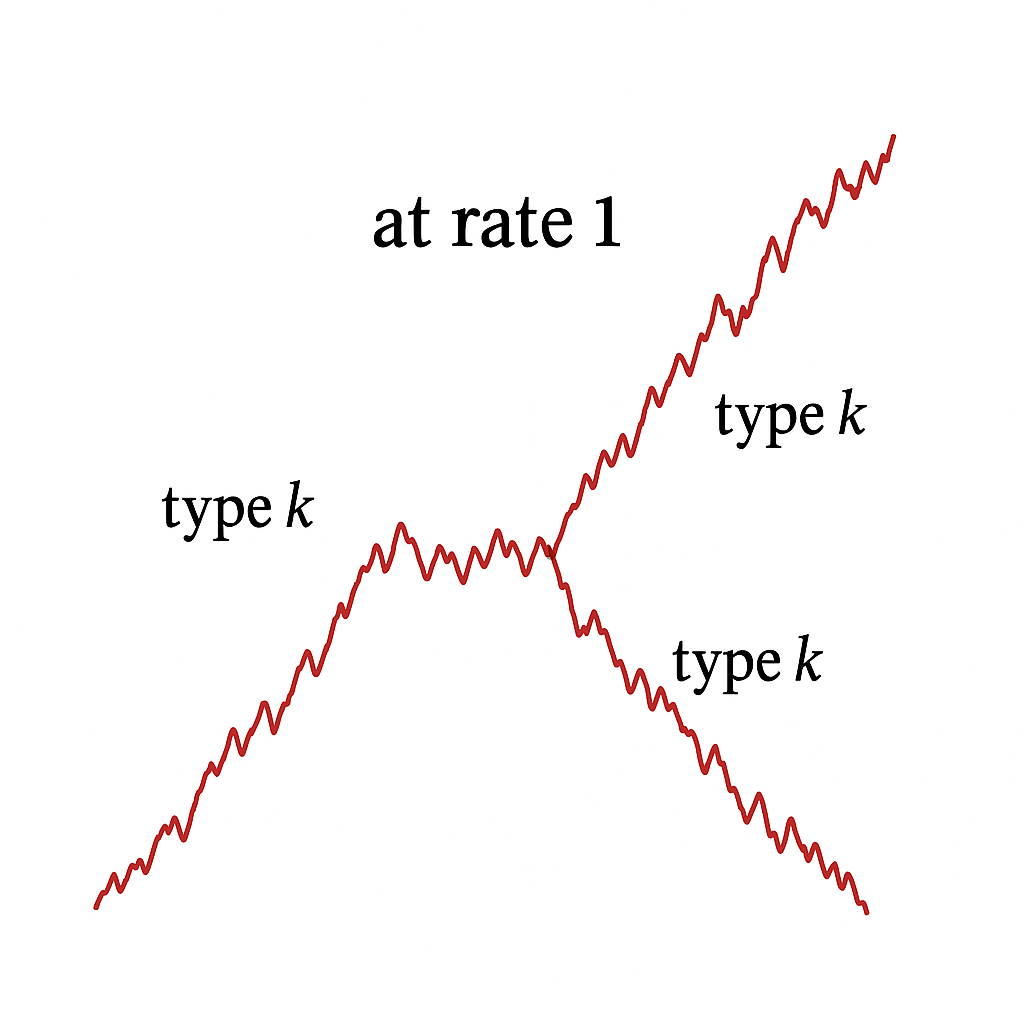}
\end{minipage}
\end{figure}

For all $t \geq 0$, we denote by $\mathcal{N}_{t}^{(i)}, i \in \{1,\ldots,k\}$ the set of particles of type $i$ at time $t$, and by $\mathcal{N}_{t}$ the set of all particles at time $t$.  We look at the following McKean functionals, where $\mathbb{E}^{(i,x)}$ denotes expectation for the process started with a single particle of type $i$ at position $x$.

$$
\begin{aligned}
& u^{i}(t, x)=\mathbb{E}^{(i,x)}\left(\prod_{u \in \mathcal{N}_{t}^{(i)}} f^{i}\left(X_{u}(t)\right) \prod_{u \in \mathcal{N}_{t}^{(i+1)}} f^{i+1}\left(X_{u}(t)\right) \ldots \prod_{u \in \mathcal{N}_{t}^{(k)}} f^{k}\left(X_{u}(t)\right)     \right). \\
\end{aligned}
$$

Here $u^{i}(0,x)=f^{i}(x)$ is the initial condition for each component. Since particles of type $k$ only branch into particles of type $k$, $u^{k}(t, x)=\mathbb{E}^{(k,x)}\left(\prod_{u \in \mathcal{N}_{t}^{(k)}} f^{k}\left(X_{u}(t)\right) \right).$ As a result, $u^{k}(t,x)$ satisfies the classic Fisher-KPP equation

$$
\begin{aligned}
u^{k}_{t}=u^{k}_{xx}+u^{k}(1-u^{k}) .
\end{aligned}
$$

Each particle of type $i \in \{1,\ldots,k-1\}$ branches into two particles of type $i$ when clock $\tau_1$ rings, and into one particle of type $i$ and one particle of type $i+1$ when clock $\tau_2$ rings with $P(\tau_1>t)=e^{-t}, P(\tau_2>t)=e^{-\alpha t}.$ Each particle of type $k$ branches into two particles of type $k$ when $\tau_1$ rings.  We write a renewal equation for $u^{i}(t,x)$ for $i \in \{1,\ldots,k-1\}$ conditioning on the first branching event. Until time $t$, we have one of the following cases.

\begin{enumerate}
\item{ The initial particle of type $i$ has not branched yet. 
This happens if none of $\tau_1,\tau_2$ have rang, which has probability $P(\tau_1>t)P(\tau_2>t)=e^{-(\alpha+1)t}$.}
\item{ The initial particle of type $i$  branched at some time $s<t$ into two particles of type $i$. This happens if the clock $\tau_1$ rang first and then we have two independent branching Brownian motions, both starting with particles of type $i$ at position $x+B_{s}$ and running for time $t-s$.
$$
\begin{aligned}
\mathbb{P}\left( \{\tau_{1} \in d s \} \cap \{\tau_{1}<\tau_{2} \}\right)=\mathbb{P}\left(\tau_{1} \in d s\right) \mathbb{P} \left( \tau_{1}<\tau_{2}  | \tau_{1}\in d s \right)=e^{-(1+\alpha)s}d s
\end{aligned}
$$}
\item{The initial particle of type $i$ branched into one particle of type $i$ and one particle of type $i+1$. 
This happens if the clock $\tau_2$ rang first at some time $s<t$, and then we have two independent branching Brownian motions, one with a particle of type $i$, the other with a particle of type $i+1$, starting at position $x+B_s$ and running for time $t-s$.
\newline 
$$
\begin{aligned}
\mathbb{P}\left(\{\tau_{2} \in d s \} \cap \{ \tau_{2}<\tau_{1} \} \right)=\mathbb{P}\left(\tau_{2} \in d s\right) \mathbb{P} \left( \tau_{2}<\tau_{1}  | \tau_{2}\in d s \right)=\alpha e^{-(1+\alpha)s}d s
\end{aligned}
$$
}
\end{enumerate}
\medskip
We get the following renewal equation.

\begin{equation}
\begin{split}
& u^{i}(t, x) =\mathbb{E}^{(i,x)}\left(f^{i}\left(B_{t}\right)\right) \mathbb{P}\left(\tau_{1}>t\right)\mathbb{P}\left(\tau_{2}>t\right) +\int_{0}^{t} \mathbb{E}^{(i,x)}\left(\left[u^{i}\left(t-s, B_{s}\right)\right]^{2}\right) \mathbb{P}\left( \{\tau_{1} \in d s \} \cap \{\tau_{1}<\tau_{2} \}\right) \\
&+\int_{0}^{t} \mathbb{E}^{(i,x)}\left(u^{i}\left(t-s,B_{s}\right)u^{i+1}\left(t-s, B_{s}\right)\right) \mathbb{P}\left(\{\tau_{2} \in d s \} \cap \{ \tau_{2}<\tau_{1} \} \right)  \\
& =\mathbb{E}^{(i,x)} \left(f^{i}\left(B_{t}\right)\right) e^{-(1+\alpha)t} + \int_{0}^{t} \mathbb{E}^{(i,x)} \left(\left[u^{i} \left(t-s,B_{s}\right)\right]^{2}+\alpha u^{i}\left(t-s,B_{s}\right)u^{i+1}\left(t-s, B_{s}\right) \right) e^{-(1+\alpha)s} d s .
\end{split}
\end{equation}

Let $\left(e^{\mathcal{L} t}\right)_{t \ge 0}$ denote the semigroup generated by $\mathcal{L} =\partial_{xx} - (1+\alpha).$ We rewrite the above equation as 

\begin{equation}
\begin{split}
&u^{i}(t,x) =\left[e^{\mathcal{L} t} f^{i}(\cdot)\right](x)+ \int_{0}^{t} \left[e^{\mathcal{L} s} \left(\left( u^{i}\right)^{2}+\alpha u^{i}u^{i+1}\right)(t-s, \cdot) \right](x) d s.
\end{split}
\end{equation}

By the Duhamel formula we obtain that $u^{i}$ satisfies the initial value problem

\begin{equation}
\begin{split}
& u^{i}_{t}= u^{i}_{xx}-(1+\alpha)u^{i} +\left(u^{i} \right)^{2} +\alpha u^{i}u^{i+1}\\
& u^{i}(0, x)=f^{i}(x)
\end{split}
\end{equation}

For $v^{i}=1-u^{i}$ our system of equations then becomes

\begin{equation} \label{cascadingsystemequations}
\begin{split}
&v^{i}_{t}= v^{i}_{xx}+v^{i}-\left(v^{i}\right)^2 +\alpha v^{i+1}\left(1-v^{i}\right) \text{ for }  i \in \{1,\ldots,k-1\} \\
& v^{k}_{t}= v^{k}_{xx}+v^{k}-\left(v^{k}\right)^2 \\
&v^{i}(0,x)=1-f^{i}(x)
\end{split}
\end{equation} 

We have thus derived system \eqref{cascadinggeneralf} with nonlinearity $v-v^2$. Moreover,  for $f^{i}(x)=\mathbbm{1}_{x>0},$

\begin{equation} \label{cdfcascadingbbmequation}
\begin{split}
& v^{1}(t, x)=1-\mathbb{P}^{(1,x)}\left\{\text { all } X_{u}(t) \geq 0\right\} =\mathbb{P}^{(1,x)}\left\{\min _{u \in \mathcal{N}_{t}} X_{u}(t) \leq 0\right\}= \mathbb{P}^{(1,0)}\left\{\max _{u \in \mathcal{N}_{t}} X_{u}(t) \geq x\right\}.   \\
\end{split}
\end{equation}

Therefore $v^{1}(t,x)$ is the cumulative distribution function of the maximum particle of the cascading BBM starting from a particle of type $1$ at the origin.  Conjecture 1.2 from \cite{belloumpaper} predicts that the maximum particle of this process centered at
$m(t)=2t-\frac{3}{2}\ln(t)+(k-1)\ln(t)$ converges in distribution to a randomized Gumbel. Theorem \ref{convergencetothewavecascading} on the location of the front of $v^{1}$  confirms this. 

Note that for $k=2$ we have a BBM with $2$ types of particles, and we recover the system \eqref{mysystem}. This two-type BBM has been studied by Belloum in \cite{belloumpaper} with probabilistic techniques. It is an edge case of the system studied by Belloum and Mallein and later by Ma and Ren in \cite{mallein_belloum, ren_and_ma}, with parameters $\sigma^2=1, \beta=1$ which correspond to the two types of particles having the same diffusivity and branching rate. The point $(\sigma^2,\beta)=(1,1)$ is the intersection of the three boundaries of the phase diagrams of \cite{mallein_belloum} and \cite{holzer_numerical} (see Figure 1 of \cite{mallein_belloum}, Figure 1 of \cite{holzer_numerical}). The median asymptotics obtained in \cite{belloumpaper} for the maximum of this two-type branching Brownian motion agree with the front location predicted by Theorem \ref{levelsets} for the u-component of \eqref{mysystem}. Our result applies in a broader PDE setting: it allows general compact perturbations of the Heaviside initial condition and arbitrary Fisher--KPP nonlinearities, including those outside the McKean class and hence without an underlying branching representation.

\section{PDE heuristics } \label{Section 3}
Let us begin by describing some PDE heuristics based on simple computations that justify the $\frac{1}{2} \ln (t)$ correction that appears in Theorem \ref{levelsets} for system \eqref{mysystem}. 

\subsection{PDE heuristics}

In the region $\left\{x \geq c_{*} t\right\}$, $u,v$ are both small, so the solution $(u,v)$ of system \eqref{mysystem} is close to the solution of the linearized system with zero Dirichlet boundary condition imposed at $c_{*}t$:
$$
\begin{aligned}
&\widetilde{u}_{t}=\widetilde{u}_{x x}+f'(0)\widetilde{u}+\alpha \widetilde{v}, \quad t>0, \quad x > c_{*} t \\
&\widetilde{v}_{t}=\widetilde{v}_{x x}+ f'(0)\widetilde{v}, \quad t>0, \quad x > c_{*}t \\
&\widetilde{u}(t,c_{*}t)=\widetilde{v}(t,c_{*}t)=0 \\
\end{aligned}
$$
In particular, $ \omega(t,x)=e^{\lambda_{*}x}\widetilde{v}\left(t, x+c_{*}t\right)$ solves the heat equation on the positive half line:
$$
\omega_{t}=\omega_{x x}, x>0, \text{ } \omega(t,0)=0.
$$
and $\zeta(t, x)=e^{ \lambda_{*} x} \widetilde{u}\left(t, x+c_{*} t\right),$ solves the forced heat equation on the positive half line:
$$
\zeta_{t}=\zeta_{x x}+\alpha \omega(t,x), x>0, \zeta(t,0)=0.
$$
It can be shown that $\zeta(t, x)$ is of order $1$ at the position $x=O(\sqrt{t})$ as $t \rightarrow+\infty$, which implies that

$$
\widetilde{u}\left(t, c_{*} t+\sqrt{t}\right) \sim  e^{- \lambda_{*} \sqrt{t}}.
$$
Indeed, since $\omega$ solves the heat equation on the positive half-line with a compactly supported initial condition, we have the approximation

\begin{equation}
\begin{split}
&\omega(x, t)=\frac{1}{\sqrt{4 \pi t}} \int_{0}^{\infty}\left[e^{-\frac{(x-y)^{2}}{4 t}}-e^{-\frac{(x+y)^{2}}{ 4 t}}\right] \omega_0(y) d y \approx \frac{e^{-\frac{x^2}{4t}}}{\sqrt{4 \pi t}} \int_{0}^{\infty} e^{-\frac{y^2}{4t}}
\left[e^{\frac{2xy}{4 t}} -e^{-\frac{2xy}{ 4 t}}\right] \omega_0(y) d y \approx \\
& \approx C \frac{e^{-\frac{x^2}{4t}}}{\sqrt{4 \pi t}} \int_{0}^{\infty} e^{-\frac{y^2}{4t}}
\frac{xy}{t} \omega_0(y) d y \approx C \frac{x e^{-\frac{x^2}{4t}}}{t^{\frac{3}{2}}}.
\end{split}
\end{equation}

Using this approximation for $\omega$ we can write for $\zeta$

$$
\zeta(t, x) \approx K \int_{0}^{t} \int_{0}^{\infty} \frac{1}{\sqrt{t-s}}\left(e^{-\frac{|x-y|^{2}}{4 (t-s)}}-e^{-\frac{|x+y|^{2}}{4 (t-s)}}\right) \frac{y e^{-y^{2} / 4 s}}{s^{3 / 2}} d y d s=I_{1}-I_{2} .
$$

Let us write out the exponentials:

$$
\Phi_{1}(x, y)=\frac{|x-y|^{2}}{(t-s)}+\frac{y^{2}}{s}= \frac{|x|^{2}}{(t-s)}-\frac{|x|^{2}}{(t-s)^{2} \varphi(t, s)}+\varphi(t, s)\left(y-Y_{1}(t, s, x)\right)^{2}
$$

where we set $
\varphi(t, s)=\frac{1}{(t-s)}+\frac{1}{s}=\frac{t}{s(t-s)}
$ and  $Y_{1}(t, s, x)=\frac{x}{(t-s)\varphi(t, s)}$. \\

Thus, $I_{1}$ can be written as

\small
\begin{equation}
\begin{split}
I_{1} & =K \int_{0}^{t} \int_{0}^{\infty} \frac{1}{s^{3 / 2} \sqrt{t-s}} y e^{-\frac{\Phi_{1}(x, y)}{4}} d y d s \\
& =K \int_{0}^{t} \frac{\exp \left(-\frac{|x|^{2}}{4 (t-s)}+\frac{|x|^{2}}{4(t-s)^{2} \varphi(t, s)}\right) }{s^{3 / 2} \sqrt{t-s}} d s \int_{0}^{\infty} y \exp \left(-\frac{\varphi(t, s)\left(y-Y_{1}(t, s, x)\right)^{2}}{4}\right) d y \\
&= K \int_{0}^{t} \frac{\exp \left(-\frac{|x|^{2}}{4 (t-s)}+\frac{|x|^{2}}{4(t-s)^{2} \varphi(t, s)}\right)}{s^{3 / 2} \sqrt{t-s}}  d s \int_{-Y_{1}(t, s, x)}^{\infty}(y+Y_{1}(t, s, x)) \exp \left(-\frac{\varphi(t, s) y^{2}}{4}\right) d y \\
& =I_{11}+I_{12} .
\end{split}
\end{equation}

We analyze $I_{11}$ and $I_{12}$ further

\begin{equation}
\begin{split}
I_{11} & = K \int_{0}^{t} \frac{1}{s^{3 / 2} \sqrt{t-s}} \exp \left(-\frac{|x|^{2}}{4 (t-s)}+\frac{|x|^{2}}{4(t-s)^{2} \varphi(t, s)}\right)  \int_{-\sqrt{\varphi(t, s)} Y_{1}(t, s, x)}^{\infty} y \exp \left(-\frac{y^{2}}{4}\right) \frac{d y d s}{\varphi(t, s)} \\
& =2 K \int_{0}^{t} \frac{1}{s^{3 / 2} \sqrt{t-s}} \exp \left(-\frac{|x|^{2}}{4 (t-s)}+\frac{|x|^{2}}{4 (t-s)^{2} \varphi(t, s)}\right) \frac{1}{\varphi(t, s)} \exp \left(-\frac{\varphi(t,s) Y_{1}^{2}(t, s, x)}{4}\right) d s \\
& =2 K \int_{0}^{t} \frac{1}{s^{3 / 2} \varphi(t, s) \sqrt{t-s}} \exp \left(-\frac{|x|^{2}}{4 (t-s)}\right) d s .
\end{split}
\end{equation}

\begin{equation}
\begin{split}
I_{12} & =K \int_{0}^{t} \frac{ Y_{1}(t, s, x)}{s^{3 / 2} \sqrt{t-s}} \exp \left(-\frac{|x|^{2}}{ 4(t-s)}+\frac{\varphi(t, s) Y_{1}^{2}(t, s, x)}{4}\right) \int_{-Y_{1}(t, s, x)}^{\infty} \exp \left(-\frac{\varphi(t, s) y^{2}}{4}\right) d y d s \\
& =K \int_{0}^{t} \frac{ Y_{1}(t, s, x)}{s^{3 / 2} \sqrt{(t-s) \varphi(t, s)}} \exp \left(-\frac{|x|^{2}}{4 (t-s)}\right) \mathcal{E}\left(-\sqrt{\varphi(t, s)} Y_{1}(t, s, x)\right) d s .
\end{split}
\end{equation}

Here, we have set $
\mathcal{E}(y)=e^{y^{2} / 4} \int_{y}^{\infty} e^{-z^{2} / 4} d z.
$

Next, let us look at the term $I_{2}$. The corresponding phase is

$$
\Phi_{2}(x, y)=\frac{|x+y|^{2}}{(t-s)}+\frac{y^{2}}{s}= \frac{|x|^{2}}{(t-s)}-\frac{|x|^{2}}{(t-s)^{2} \varphi(t, s)}+\varphi(t, s)\left(y-Y_{2}(t, s, x)\right)^{2} .
$$
with $Y_{2}(t, s, x)=\frac{-x}{(t-s) \varphi(t, s)}.$ Thus,

\begin{equation}
\begin{split}
I_{2} & = K \int_{0}^{t} \frac{\exp \left(-\frac{|x|^{2}}{4 (t-s)}+\frac{|x|^{2}}{4 (t-s)^{2} \varphi(t, s)}\right)}{s^{3 / 2} \sqrt{t-s}} d s \int_{-Y_{2}(t, s, x)}^{\infty}\left(y+Y_{2}(t, s, x)\right) \exp \left(-\frac{\varphi(t, s) y^{2}}{4}\right) d y \\
& =I_{21}+I_{22}
\end{split}
\end{equation}

We analyze $I_{21}$ and $I_{22}$

\begin{equation}
\begin{split}
I_{21} & =2 K \int_{0}^{t} \frac{1}{s^{3 / 2} \varphi(t, s) \sqrt{t-s}} \exp \left(-\frac{|x|^{2}}{4(t-s)}\right) d s .
\end{split}
\end{equation}

\begin{equation}
\begin{split}
I_{22} & = K \int_{0}^{t} \frac{ Y_{2}(t, s, x)}{s^{3 / 2} \sqrt{(t-s) \varphi(t, s)}} \exp \left(-\frac{|x|^{2}}{4 (t-s)}\right) \mathcal{E}\left(-\sqrt{\varphi(t, s)} Y_{2}(t, s, x)\right)
\end{split}
\end{equation}

We note that $ I_{11}=I_{21} $, so these two terms cancel exactly. Therefore, going back to the expression for $\zeta(t,x)$:

\begin{equation} \label{zetaapprox}
\begin{split}
& \zeta(t, x)=I_{12}-I_{22}=K \int_{0}^{t} \frac{ Y_{1}(t, s, x)}{s^{3 / 2} \sqrt{(t-s) \varphi(t, s)}} \exp \left(-\frac{|x|^{2}}{4 (t-s)}\right) \mathcal{E}\left(-\sqrt{\varphi(t, s)} Y_{1}(t, s, x)\right) d s \\
& -K \int_{0}^{t} \frac{ Y_{2}(t, s, x)}{s^{3 / 2} \sqrt{(t-s) \varphi(t, s)}} \exp \left(-\frac{|x|^{2}}{4 (t-s)}\right) \mathcal{E}\left(-\sqrt{\varphi(t, s)} Y_{2}(t, s, x)\right) \\
& =K \int_{0}^{t} \frac{1}{s^{3 / 2} \varphi(t, s) \sqrt{(t-s)}} \exp \left(-\frac{|x|^{2}}{4 (t-s)}\right)\left(\mathcal{G}\left(\sqrt{\varphi(t, s)} Y_{1}(t, s, x)\right)-\mathcal{G}\left(\sqrt{\varphi(t, s)} Y_{2}(t, s, x)\right)\right) d s .
\end{split}
\end{equation}

Here, we have defined $
\mathcal{G}(y)=y \mathcal{E}(-y)=y e^{y^{2} / 4} \int_{-y}^{\infty} e^{-z^{2} / 4} d z.
$ We note that 
\begin{equation} \nonumber
\begin{split}
& \sqrt{\varphi(t, s)} Y_{1}(t, s, x)-\sqrt{\varphi(t, s)} Y_{2}(t, s, x)=\frac{2 \sqrt{s}}{\sqrt{(t-s)t}} x .
\end{split}
\end{equation}

For $x \sim O(1)$, we approximate

$$
\mathcal{G}\left(\sqrt{\varphi(t, s)} Y_{1}(t, s, x)\right)-\mathcal{G}\left(\sqrt{\varphi(t, s)} Y_{2}(t, s, x)\right) \approx \mathcal{G}^{\prime}\left(\sqrt{\varphi(t, s)} Y_{1}(t, s, x)\right) \frac{2 x}{(t-s) \sqrt{\varphi(t, s)}}
$$
and for $
\sqrt{\varphi(t, s)} Y_{1}(t, s, x)=\frac{x}{(t-s) \sqrt{\varphi(t, s)}} = \frac{\sqrt{s}}{\sqrt{(t-s)t}} x
$, we approximate $
\mathcal{G}^{\prime}(y) \approx C e^{\frac{y^{2}}{4}}.
$

Using this approximation in $\ref{zetaapprox}$ gives us

\begin{equation} \label{finalzetaapproximation}
\begin{split}
\zeta(t, x) & =K \int_{0}^{t} \frac{1}{s^{3 / 2} \varphi(t, s) \sqrt{(t-s)}} \exp \left(-\frac{|x|^{2}}{4 (t-s)}\right)\left(\mathcal{G}\left(\sqrt{\varphi(t, s)} Y_{1}(t, s, x)\right)-\mathcal{G}\left(\sqrt{\varphi(t, s)} Y_{2}(t, s, x)\right)\right) d s \\
& \approx C K \int_{0}^{t} \frac{1}{s^{3 / 2} \varphi(t, s) \sqrt{(t-s)}} \exp \left(-\frac{|x|^{2}}{4(t-s)}\right) \frac{2 x}{(t-s) \sqrt{\varphi(t, s)}}  \exp \left(\frac{\varphi(t, s) Y_{1}^{2}(t, s, x)}{4}\right) d s  \\
&= C K \int_{0}^{t} \frac{s^{3 /2} (t-s)^{ 3 /2 }}{s^{3 / 2} t^{3 / 2} } \exp \left(-\frac{|x|^{2}}{4(t-s)}\right) \frac{2 x}{(t-s)^{3/2} } \exp \left(\frac{|x|^2 s}{4t(t-s)}\right) d s \\
&= C K \int_{0}^{t} \frac{2x}{ t^{3 / 2} } \exp \left(-\frac{|x|^{2}}{4(t-s)}\right) \exp \left(\frac{|x|^2 s}{4t(t-s)}\right) d s \\
&=C K \frac{2x}{ t^{1 / 2} }  \exp \left(-\frac{|x|^{2}}{4t}\right) 
\end{split}
\end{equation}

From $\ref{finalzetaapproximation}$ we see that $\zeta$ is of order $1$ at the position $x=O (\sqrt{t})$. Thus 
$
\widetilde{u}\left(t, c_{*} t+\sqrt{t}\right) \sim  e^{- \lambda_{*} \sqrt{t}}.
$

To match this value, the front must be shifted to the position $m(t)= c_{*}t-\frac{1}{2 \lambda_{*}} \ln t$,
\small
$$
\left.U_{c_{*}}\left(x-c_{*} t+\frac{1}{2 \lambda_{*}} \ln t \right)\right|_{x=c_{*} t+\sqrt{t}} \sim \left(\sqrt{t}+\frac{1}{2 \lambda_{*}} \ln t\right) e^{-\lambda_{*}\left(\sqrt{t}+\frac{1}{2 \lambda_{*}}\ln t \right)} \sim  e^{- \lambda_{*} \sqrt{t}}
$$
\normalsize
\newline
which provides the heuristic estimate $c_{*} t-\frac{1}{2 \lambda_{*}} \ln t$ as a lower bound of the location of $u$.


\subsection{The linearized Dirichlet problem in the log shifted frames}

Inspired by the work on the classic Fisher-KPP equation, we expect that the solution to the nonlinear system $\eqref{mysystem}$ will behave as the solution of the linearized system with Dirichlet boundary conditions imposed at appropriately chosen logarithmic frames $\xi(t)$, with $t_0>0$ and $r$

\begin{equation}
\begin{split}
  & \xi_{u}(t)=c_{*}t-\frac{r}{\lambda_{*}} \ln \left(t+t_{0}\right), \xi_{v}(t)=c_{*}t-\frac{3}{2 \lambda_{*}} \ln \left(t+t_{0}\right) \\
\end{split}
\end{equation}

The discussion that follows explains why we need to choose $r=\frac{1}{2}.$

We do the change of variables $u(t,x)=\widetilde{u}(t, x-\xi_{u}(t)), v(t,x)=\widetilde{v}(t,x-\xi_{v}(t))$

\begin{equation}
\begin{split}
\widetilde{u}_{t}-c_{*}\widetilde{u}_{x} + \frac{r}{\lambda_{*}(t+t_{0})}\widetilde{u}_{x} & =\widetilde{u}_{xx}+f(\widetilde{u})+\alpha \widetilde{v}(t,x+\frac{\left(\frac{3}{2}-r \right)}{\lambda_{*}} \ln (t+t_0))(1-\widetilde{u}), \\
\widetilde{v}_{t}-c_{*} \widetilde{v}_{x} + \frac{3}{2 \lambda_{*} (t+t_{0})} \widetilde{v}_{x} & =\widetilde{v}_{xx}+f(\widetilde{v}) .
\end{split}
\end{equation}

The linearized Dirichlet problem in the logarithmically shifted frames is

\begin{equation} \label{linearizeddirichletinlog}
\begin{split}
&U_{t}-c_{*} U_{x} + \frac{r}{\lambda_{*}(t+t_{0})}U_{x}=U_{xx}+f'(0)U+\alpha V(t,x+\frac{\left(\frac{3}{2}-r \right)}{\lambda_{*}} \ln (t+t_0)), \\
&V_{t}-c_{*} V_{x} + \frac{3}{2\lambda_{*}(t+t_{0})} V_{x}=V_{xx}+f'(0)V . \\
& U(t,0)=0, V(t,0)=0 
\end{split}
\end{equation}

We do another change of variables to get rid of the exponential factor

\begin{equation}
\begin{split}
& U(t, x)=e^{-\lambda_{*}x} z(t, x), V(t, x)=e^{-\lambda_{*} x} w(t, x) . \\
\end{split}
\end{equation}

Using the dispersion relation $c_{*} \lambda_{*}= (\lambda_{*})^2 +f'(0)$ and $c_{*}=2 \lambda_{*}$, we get the equations for $z,w$

\begin{equation}
\begin{split}
&z_{t}+\frac{r}{\lambda_{*}(t+t_{0})} (z_{x}-\lambda_{*}z )=z_{x x}+\frac{\alpha }{(t+t_0)^{\frac{3}{2}-r} }w(t,x+\frac{\left(\frac{3}{2}-r\right)}{\lambda_{*}}\ln (t+t_0)) \\
&w_{t}+\frac{3}{2 \lambda_{*} (t+t_{0})}(w_{x}-\lambda_{*}w)=w_{x x} \\
&z(t,0)=0, w(t,0)=0.
\end{split}
\end{equation}

We introduce  the self-similar variables $\tau=\ln \left(t+t_{0}\right)-\ln t_{0}, \eta=\frac{x}{\sqrt{t+t_{0}}}$ and set 

\begin{equation}
\begin{split}
&z(t,x)=p(\tau,\eta)=p\Big(\ln \left(t+t_{0}\right)-\ln t_{0},\frac{x}{\sqrt{t+t_{0}}}\Big), w(t,x)=q(\tau,\eta)=q\Big(\ln \left(t+t_{0}\right)-\ln t_{0},\frac{x}{\sqrt{t+t_{0}}}\Big).
\end{split}
\end{equation}

We obtain the equations for $p,q$

\begin{equation} \label{seeheuristicwithr}
\begin{split}
&p_{\tau}-\frac{\eta}{2}p_{\eta}+\frac{r}{\lambda_{*}}\left( \frac{e^{-\frac{\tau}{2}}}{t_{0}^{\frac{1}{2}}} p_{\eta}-\lambda_{*}p \right)=p_{\eta \eta}+\frac{\alpha}{(e^{\tau}t_0)^{\frac{1}{2}-r}} q\left(\tau, \eta+ \frac{\left(\frac{3}{2}-r\right)}{\lambda_{*}}\frac{\tau+\ln(t_0)}{\sqrt{t_0 e^{\tau}}}\right)
\\
&q_{\tau}-\frac{\eta}{2}q_{\eta}+\frac{3}{2 \lambda_{*}} \left( \frac{e^{-\frac{\tau}{2}}}{t_{0}^{\frac{1}{2}}} q_{\eta} -\lambda_{*} q \right)=q_{\eta \eta}
\end{split}
\end{equation}

From Lemma 2.2 in \cite{short_proof} we have for $q(\tau,\eta)$, with $\varepsilon=\frac{3}{2\lambda_{*}t_0^{\frac{1}{2}}}$

$$
q(\tau, \eta)=e^{\frac{\tau}{2}}\eta \left(\frac{e^{-\frac{\eta^{2}}{4}}}{2 \sqrt{\pi}} \int_{0}^{+\infty} \xi v_{0}(\xi) d \xi+O(\varepsilon)+O\left(e^{-\frac{\tau}{2}}\right)\right)
$$

Transforming into the original variables gives us

$$
V(t, x)=\frac{x e^{-\lambda_{*}x}}{\sqrt{t_0}}\left[C e^{-\frac{x^{2}}{4\left(t+t_{0}\right)}}+h(t, x)\right]
$$

so $V(t,x)$ remains bounded from above and below away from zero on the interval $1 \leq x \leq 2$.

We can now see from $\eqref{seeheuristicwithr}$ that we need to choose $r=\frac{1}{2}$ to make the power of $e^{\tau}t_0$ in the denominator of the forcing vanish
and have the $q$ forcing from the right side balance with the extra $p$ term from the left side of the equation. Then $p$ has the same asymptotics as $q$, and 
$U(t,x)$ is also bounded from above and from below away from zero on the interval $1 \leq x \leq 2$.

\section{Long-time front asymptotics} \label{section 4}
\subsection{An upper bound for the location of the front of the cascading system}
We begin by proving Theorem \ref{cascadingupperbound}.
As suggested by the heuristics of the previous section, we move each $v^{i}$ of the cascading system \eqref{cascadinggeneralf} into the moving frame $$\xi_{i}(t)=c_{*} t-\frac{\left(\frac{3}{2}+i-k\right)}{\lambda_{*}}\ln(t+t_0), \text{ setting }  v^{i}(t,x)=\widetilde{v^{i}}(t,x-\xi_{i}(t))$$

\begin{equation} \label{logmovingframecascading}
\begin{split}
&(\widetilde{v^{i}})_{t}-c_{*}(\widetilde{v^{i}})_{x}+\frac{\left( \frac{3}{2}+i-k\right)}{\lambda_{*} (t+t_0)} (\widetilde{v^{i}})_{x} = (\widetilde{v^{i}})_{xx}+f(\widetilde{v^{i}}) +\alpha \widetilde{v^{i+1}}(t,x+\frac{\ln(t+t_0)}{\lambda_{*}}) \left(1-\widetilde{v^{i}}\right), 1 \leq i \leq k-1 \\
& (\widetilde{v^{k}})_{t}-c_{*}(\widetilde{v^{k}})_{x}+\frac{\frac{3}{2}}{\lambda_{*}(t+t_0)}(\widetilde{v^{k}})_{x}= (\widetilde{v^{k}})_{xx}+f(\widetilde{v^{k}}).\\
\end{split}
\end{equation}

We consider the linearized Dirichlet problem with zero boundary conditions imposed at those logarithmically shifted frames.

\begin{equation} \label{cascadinglinearizeddirichletlogframes}
\begin{split}
&V^{i}_{t}-c_{*}V^{i}_{x}+\frac{\left( \frac{3}{2}+i-k\right)}{\lambda_{*}(t+t_0)} V^{i}_{x}= V^{i}_{xx}+f'(0)V^{i}+\alpha V^{i+1}(t,x+\frac{\ln(t+t_0)}{\lambda_{*}})  \text{ for } 1 \leq i \leq k-1, x >0 \\
& V^{k}_{t}-c_{*}V^{k}_{x}+\frac{\frac{3}{2}}{\lambda_{*}(t+t_0)} V^{k}_{x}= V^{k}_{xx}+f'(0)V^{k}, x >0 \\
&V^{i}(t,0)=0 \text{ for } 1 \leq i \leq k.\\
\end{split}
\end{equation} 

We will need the following Proposition.

\begin{proposition}\label{asymptoticslinearlemma}
    
Let $(V^{i})_{i=1}^{k}$ be the solution to the linearized Dirichlet problem in the logarithmically shifted reference frames $\eqref{cascadinglinearizeddirichletlogframes}$ with zero boundary conditions and compactly supported initial data on $(0,+\infty)$ with $V^{i}_{0} \not \equiv 0$. There exists a constant $C>0$ that depends on the initial conditions  and a constant $t_{0}>0$ that depends on $C$ such that

$$
V^{i}(t, x)=\frac{x e^{-\lambda_{*}x}}{\sqrt{t_{0}}}\left[C e^{-\frac{x^{2}}{4\left(t+t_{0}\right)}}+h^{i}(t, x)\right]
$$

where, for each $\sigma>0$,

$$
\limsup _{t \rightarrow+\infty} \sup _{0 \leq x \leq \sigma \sqrt{t+1}}|h^{i}(t, x)|<\frac{C}{2}.
$$
\end{proposition}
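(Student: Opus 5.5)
The plan is a downward induction on $i$, from $i=k$ to $i=1$. At each step I convert \eqref{cascadinglinearizeddirichletlogframes} into the self-similar variables of Section \ref{Section 3}. I then show that the forcing coming from $V^{i+1}$, after projection onto the principal Dirichlet mode, produces exactly the profile $\eta e^{-\eta^2/4}$ with growth $e^{\tau/2}$.

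\textbf{Base case $i=k$.} The equation for $V^k$ is the single Fisher-KPP linearization in the frame $c_*t-\frac{3}{2\lambda_*}\ln(t+t_0)$. Setting $V^k=e^{-\lambda_*x}w^k$ and $w^k(t,x)=q^k(\tau,\eta)$, the function $q^k$ solves the second equation of \eqref{seeheuristicwithr}. Lemma 2.2 of \cite{short_proof} then gives
\[
q^k(\tau,\eta)=e^{\tau/2}\eta\left(c_k e^{-\eta^2/4}+O(\varepsilon)+O(e^{-\tau/2})\right),\qquad c_k=\frac{1}{2\sqrt\pi}\int_0^\infty \xi V^k_0(\xi)e^{\lambda_*\xi}\,d\xi .
\]
Returning to $(t,x)$, the prefactor $e^{\tau/2}\eta=x/\sqrt{t_0}$ yields the claimed form, with $|h^k|$ small on $0\le x\le\sigma\sqrt{t+1}$ once $t_0$ is large, since $\varepsilon=\frac{3}{2\lambda_*\sqrt{t_0}}$.

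\textbf{Inductive step.} Put $r_i=\frac32+i-k$, so the frame shift between components $i$ and $i+1$ is $\frac{1}{\lambda_*}\ln(t+t_0)$. After the exponential change of variables, the forcing becomes $\alpha e^{-\ln(t+t_0)}w^{i+1}(t,x+\lambda_*^{-1}\ln(t+t_0))$. The factor $(t+t_0)^{-1}$ is absorbed by $dt=(t+t_0)\,d\tau$, which gives
\[
p^i_\tau+\mathcal{M}p^i-r_ip^i+\frac{r_i}{\lambda_*}\frac{e^{-\tau/2}}{\sqrt{t_0}}p^i_\eta=\alpha\, q^{i+1}\Big(\tau,\eta+\frac{\tau+\ln t_0}{\lambda_*\sqrt{t_0e^\tau}}\Big),\qquad \mathcal{M}=-\partial_{\eta\eta}-\frac{\eta}{2}\partial_\eta .
\]
Here $\mathcal{M}$ acts on the Dirichlet half-line in $L^2(e^{\eta^2/4}d\eta)$. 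It has eigenvalues $1,2,3,\dots$, and its principal eigenfunction is $\phi_0=\eta e^{-\eta^2/4}$.

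By the inductive hypothesis the right-hand side equals $\alpha c_{i+1}e^{\tau/2}\phi_0$ up to errors. These errors come from $O(\varepsilon)$, from $O(e^{-\tau/2})$, and from the shift of the argument, which is $O(\tau e^{-\tau/2}/\sqrt{t_0})$ and is controlled via $\partial_\eta q^{i+1}$. The homogeneous part on the $\phi_0$ mode grows like $e^{(r_i-1)\tau}=e^{(i-k+\frac12)\tau}$, which is strictly slower than $e^{\tau/2}$ because the gap is $k-i\ge1$. There is therefore no resonance, and Duhamel on the $\phi_0$ coefficient gives
\[
\langle p^i,\phi_0\rangle\sim\frac{\alpha c_{i+1}}{k-i}e^{\tau/2},\qquad\text{hence}\qquad c_i=\frac{\alpha^{k-i}}{(k-i)!}c_k .
\]
The higher modes decay relative to $e^{\tau/2}$ at rate at least $e^{-\tau}$ times the forcing error, and the initial datum contributes only $O(e^{(r_i-1)\tau})$. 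Undoing the transformations gives $V^i=\frac{xe^{-\lambda_*x}}{\sqrt{t_0}}[c_ie^{-x^2/4(t+t_0)}+h^i]$.

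\textbf{Matching the statement's single constant.} The natural outcome of the induction is one constant $c_i$ per component, related by $c_i=\frac{\alpha^{k-i}}{(k-i)!}c_k$. To obtain the form stated in the proposition, I would rescale each $V^i$ by $c_i$, or equivalently normalize the initial data through the linear dependence on $V^k_0$ and the $V^i_0$, to a common $C$. The bound $\limsup|h^i|<C/2$ then follows by choosing $t_0$ large enough, depending on $C$, that all the $O(\varepsilon)$ terms are below $C/4$.

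\textbf{Main obstacle.} The hardest step is making the error terms uniform on $0\le\eta\le\sigma$. The operator $\mathcal{M}$ is perturbed by the non-self-adjoint drift $\varepsilon e^{-\tau/2}\partial_\eta$, and the forcing is evaluated at a shifted argument. I would handle both by weighted $L^2$ energy estimates in $e^{\eta^2/4}d\eta$, as in the proof of Lemma 2.2 of \cite{short_proof}, including an $H^1$ estimate to control the shift. Pointwise bounds would then follow from parabolic regularity and Sobolev embedding on $[0,\sigma]$.
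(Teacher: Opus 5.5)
Your core argument is essentially the paper's. Both use self-similar variables and split each component into its principal Dirichlet mode plus an orthogonal remainder. Both apply Duhamel to the principal coefficient: the paper's ODE $q^i_\tau+(k-i)q^i=\dots+\alpha h_i$ is non-resonant and produces $\frac{\alpha^{k-i}}{(k-i)!}$, exactly as your recursion $c_i=\alpha c_{i+1}/(k-i)$ does. The remainder is handled by spectral-gap energy estimates, an $H^1$ bound absorbs the shift $\delta(\tau)$, and parabolic regularity gives pointwise control on compacts. Working with $-\partial_{\eta\eta}-\frac{\eta}{2}\partial_\eta$ in $L^2(e^{\eta^2/4}d\eta)$ instead of the conjugated operator $M$ on flat $L^2$ is only a change of variables. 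Your induction hypothesis must carry the weighted $L^2$/$H^1$ information, as the paper's chain of lemmas does, not the pointwise asymptotic form; otherwise the Duhamel step does not close. Your last paragraph indicates you intend this.

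The step that fails is your ``matching'' paragraph.
\begin{itemize}
\item You cannot rescale the $V^i$ by different constants. Replacing $V^i$ by $a_iV^i$ turns the coupling into $\alpha\frac{a_i}{a_{i+1}}(a_{i+1}V^{i+1})$, so the rescaled family no longer solves \eqref{cascadinglinearizeddirichletlogframes}.
\item You cannot tune the initial data either. The $V^i_0$ with $i<k$ only contribute terms decaying like $e^{-(k-i)\tau}$ relative to leading order. All limiting constants are fixed by $V^k_0$, with the rigid ratios $c_i/c_k=\frac{\alpha^{k-i}}{(k-i)!}$.
\item In fact no argument can give a single $C$. For $0<x\ll\sqrt t$ one has $h^i\approx c_i-C$, so requiring $\limsup\sup|h^i|<C/2$ for every $i$ forces roughly $\max_i c_i<3\min_i c_i$. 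This fails, e.g., for $k=2$ and $\alpha$ large.
\end{itemize}
The proposition must be read with $C=C^i=\frac{\alpha^{k-i}}{(k-i)!}C^k$. This is what the paper's proof actually yields, and it is all the supersolution argument needs, so simply drop that paragraph.

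There is also a smaller slip. At $\tau=0$ one has $\eta=x/\sqrt{t_0}$, so $\int_0^\infty\eta\, p^k(0,\eta)\,d\eta=t_0^{-1}\int_0^\infty\xi e^{\lambda_*\xi}V^k_0(\xi)\,d\xi$, and your $c_k$ is missing a factor $1/t_0$. Consequently $C$ depends on $t_0$. When you ``choose $t_0$ depending on $C$'', the $O(\varepsilon)$ errors therefore have to be controlled relative to $c_k$, not in absolute size.
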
 

Proposition \ref{asymptoticslinearlemma} provides us with precise estimates for the linearized Dirichlet problem in the log shifted frames \eqref{cascadinglinearizeddirichletlogframes}. Let us postpone its proof for a moment and see how we can use it to construct a supersolution for the $v^{i}$ component of the nonlinear system \eqref{cascadinggeneralf} and obtain an upper bound for the location of the front of $v^{i}$.

In the log-shifted frames our nonlinear cascading system is

\begin{equation} \label{logmovingframecascadingagain}
\begin{split}
&(\widetilde{v^{i}})_{t}-c_{*}(\widetilde{v^{i}})_{x}+\frac{\left( \frac{3}{2}+i-k\right)}{\lambda_{*} (t+t_0)} (\widetilde{v^{i}})_{x} = (\widetilde{v^{i}})_{xx}+f(\widetilde{v^{i}}) +\alpha \widetilde{v^{i+1}}(t,x+\frac{\ln(t+t_0)}{\lambda_{*}}) \left(1-\widetilde{v^{i}}\right), 1 \leq i \leq k-1 \\
& (\widetilde{v^{k}})_{t}-c_{*}(\widetilde{v^{k}})_{x}+\frac{\frac{3}{2}}{\lambda_{*}(t+t_0)}(\widetilde{v^{k}})_{x}= (\widetilde{v^{k}})_{xx}+f(\widetilde{v^{k}})\\
\end{split}
\end{equation}

We consider the solution to the linearized system in the logarithmically shifted frames with zero Dirichlet boundary conditions and initial conditions equal to the indicator function on the interval $[0,2A]$, where we take $A>x_2$. Recall that for the initial conditions we have $\widetilde{v^{i}}(0,x)=0$ for $x>A$ and $\widetilde{v^{i}}(0,x) \leq 1$ for all $x>0$, so that $\widetilde{v^{i}}(0,x) \leq V^{i}(0,x)=\mathbbm{1}_{[0,2A]}$ for $x \geq 0$.

\begin{equation} \label{cascadinglinearizeddirichletlogframesagain}
\begin{split}
&V^{i}_{t}-c_{*}V^{i}_{x}+\frac{\left( \frac{3}{2}+i-k\right)}{\lambda_{*}(t+t_0)} V^{i}_{x}= V^{i}_{xx}+f'(0)V^{i}+\alpha V^{i+1}(t,x+\frac{\ln(t+t_0)}{\lambda_{*}})  \text{ for } 1 \leq i \leq k-1, x>0 \\
& V^{k}_{t}-c_{*}V^{k}_{x}+\frac{\frac{3}{2}}{\lambda_{*}(t+t_0)} V^{k}_{x}= V^{k}_{xx}+f'(0)V^{k}, x>0\\
&V^{i}(t,0)=0 \text{ for }  1 \leq i \leq k .\\
&V^{i}(0,x)= \mathbbm{1}_{[0,2A]} \text{ for }  1 \leq i \leq k .
\end{split}
\end{equation} 

We will suppose that there exist $B>0, t_0>0$ large enough such that for $x \geq 0, t > 0$
\begin{equation} \label{condition}
BV^{i+1}(t, x+ \frac{\ln (t+t_0)}{\lambda_{*}})\geq \widetilde{v^{i+1}}(t, x+\frac{\ln(t+t_0)}{\lambda_{*}})
\end{equation}

and show that then this holds for $i$, i.e. 

\begin{equation} \label{condition}
BV^{i}(t, x+ \frac{\ln (t+t_0)}{\lambda_{*}})\geq \widetilde{v^{i}}(t, x+\frac{\ln(t+t_0)}{\lambda_{*}}).
\end{equation}

We set 
$$
\begin{aligned}
&N(w)=w_{t}-c_{*}w_{x}+ \frac{\left( \frac{3}{2}+i-k\right)}{\lambda_{*}(t+t_0)}w_{x}-w_{xx}-f(w)-\alpha \widetilde{v^{i+1}}(t,x+\frac{\ln(t+t_0)}{\lambda_{*}}) \\
&+\alpha \widetilde{v^{i+1}}(t, x+ \frac{\ln(t+t_0)}{\lambda_{*}})w
\end{aligned}
$$

A function $w(t,x)$ is a supersolution for \eqref{logmovingframecascadingagain} if $N(w) \geq 0$.

We have that any multiple $\{B V^{i}\}_{i=1}^{k}$, (where $B>0$ is to be chosen later to be large enough), solves the linearized system \eqref{cascadinglinearizeddirichletlogframesagain} and thus

$$
\begin{aligned}
&N(BV^{i})=B (V^{i})_{t}-c_{*}B ({V^{i}})_{x}+\frac{\left( \frac{3}{2}+i-k\right)}{\lambda_{*}(t+t_0)} B (V^{i})_{x} -B (V^{i})_{xx}-f(B V^{i} ) \\
&-\alpha \widetilde{v^{i+1}}(t,x+\frac{\ln (t+t_0)}{\lambda_{*}})+\alpha \widetilde{v^{i+1}}(t,x+\frac{\ln(t+t_0)}{\lambda_{*}}) B V^{i} = f'(0)B V^{i} -f( B V^{i}) \\
&+\alpha \left(B V^{i+1}(t, x+ \frac{\ln (t+t_0)}{\lambda_{*}})-\widetilde{v^{i+1}}(t, x+\frac{\ln(t+t_0)}{\lambda_{*}})\right)+\alpha \widetilde{v^{i+1}}(t,x+\frac{\ln(t+t_0)}{\lambda_{*}}) B V^{i}
\end{aligned}
$$

Since our $f(u)$ is a Fisher-KPP type nonlinearity, we have that
$$
f(B V^{i}) \leq f'(0) B V^{i}
$$
We have that then $N(B V^{i})\geq 0$ as long as 
\begin{equation} \label{condition}
BV^{i+1}(t, x+ \frac{\ln (t+t_0)}{\lambda_{*}})\geq \widetilde{v^{i+1}}(t, x+\frac{\ln(t+t_0)}{\lambda_{*}})
\end{equation}

We have supposed that there exist $B,t_0$ to have this be true for $i+1$. Thus we get $N(BV^{i})>0$  and thus $BV^{i}$ is a supersolution for $\widetilde{v^{i}}$ for $x>0$. However, we can't compare immediately $\widetilde{v^{i}}(t,x)$ with $BV^{i}(t,x)$ because we have $BV^{i}(t,0)=0$ while $\widetilde{v^{i}}(t,0)>0$. In Lemma \eqref{asymptoticslinearlemma} we showed that there exists a constant $C>0$ that depends on the initial conditions such that
$$
V^{i}(t, x)=\frac{1}{\sqrt{t_0}} x e^{-\lambda_{*} x}\left[C e^{-\frac{x^{2}}{ 4\left(t+t_{0}\right)}}+h^{i}(t, x)\right]
$$
where, for each $\sigma>0$,

$$
\limsup _{t \rightarrow+\infty} \sup _{0 \leq x \leq \sigma \sqrt{t+1}}|h^{i}(t, x)|<\frac{C}{2}.
$$

Thus, we can choose $t_0$ sufficiently large and then choose $B$
sufficiently large (depending on $t_0$, for instance $B=M\sqrt{t_0}$ for large $M$) such that
$
BV^{i}(t,A) \geq 1 \text{ for all } t>0.$
Then $BV^{i}$ intersects $1$ for a second time at a point $x(t)$ whose
location is uniformly bounded in time, say by $D$. By enlarging $t_0$
further if necessary and choosing $B$ accordingly, we may also ensure that
$
\frac{\ln(t_0)}{\lambda_{*}} \geq D.$ We define for $x \in \mathbb{R}$

$$
\overline{V^{i}}(t, x)=\left\{\begin{array}{cl}
1, & \text { if } x \leq A \\
\min \left(1, B V^{i}(t, x)\right), & \text { if } x \geq A
\end{array}\right.
$$

We get that $\overline{V^{i}(t,x)} \geq \widetilde{v^{i}}(t,x)$. In particular, since $\overline{V^{i}(t,x)}=BV^{i}(t,x)$ for $x>D$, we get $\overline{V^{i}}(t,x +\frac{\ln(t+t_0)}{\lambda_{*}})=B V^{i}(t,x+ \frac{\ln(t+t_0)}{\lambda_{*}})$ for $x>0, t>0$.
This lets us conclude the desired inequality for $i$, i.e. we have for $x>0, t>0$

\begin{equation} \label{condition}
BV^{i}(t, x+ \frac{\ln (t+t_0)}{\lambda_{*}})\geq \widetilde{v^{i}}(t, x+\frac{\ln(t+t_0)}{\lambda_{*}}).
\end{equation}

As shown in \cite{short_proof}, this inequality holds for $V^{k}$ and $\widetilde{v^{k}}$. By induction, we can conclude that this holds for all $1 \leq i \leq k$.

Having this inequality hold for $i+1$ ensures that $N(BV^{i})>0$, and that $\widetilde{v^{i}}(t,x) \leq \overline{V^{i}}(t,x)$, for $t>0, x \in \mathbb{R}$, which gives us 

$$
v^{i}\left(t,x+c_{*}t-\frac{\left(\frac{3}{2}-k+i \right)}{\lambda_{*}} \ln(1+\frac{t}{t_0})\right) \leq \overline{V^{i}}(t,x)
$$

for all $t>0, x \in \mathbb{R}$ which implies that

\begin{equation} \label{limsupmaxcascading}
\limsup _{t \rightarrow+\infty}\left(\max _{x \geq c_{*}t-\frac{\left(\frac{3}{2}-k+i \right)}{\lambda_{*}}\ln t +y} v^{i}(t, x)\right) \rightarrow 0 \quad \text { as } y \rightarrow+\infty \text {. }
\end{equation}

and for every $\sigma>0$, there is a positive constant $\rho>0$ such that

$$
v^{i}\left(t, c_{*}t-\frac{\left(\frac{3}{2}-k+i \right)}{\lambda_{*}}\ln t +y \right) \leq \rho(y+1) e^{- \lambda_{*} y} \quad \text { for all } t \geq 1 \text { and } 0 \leq y \leq \sigma \sqrt{t}.
$$

Then for any $m \in(0,1)$, for the level set of $v^{i}$, $E^{i}_{m}(t)=\{x>0, v^{i}(t, x)=m\}$ there exist constants $t_{0}>0$ and $C \geq 0$ such that

\begin{equation} \label{maxlevelsetcascading}
\max E^{i}_{m}(t) \leq c_{*}t-\frac{\left(\frac{3}{2}-k+i \right)}{\lambda_{*}}\ln t +C \text { for all } t \geq t_{0}.
\end{equation}

This inequality $\eqref{maxlevelsetcascading}$ follows directly from $\eqref{limsupmaxcascading}$ and provides us with the desired upper bound for the front of the $v^{i}$ component of the cascading system $\eqref{cascadingsystemequations}$. This completes the proof of Theorem \ref{cascadingupperbound}.

We now prove Proposition \ref{asymptoticslinearlemma}. 

\subsection{Proof of Proposition \ref{asymptoticslinearlemma}}
We begin by a standard chain of changes of variables. We set $
V^{i}(t, x)=e^{-\lambda_{*}x} z^{i}(t, x)$
and obtain the system for $z^{i}(t,x)$ using the dispersion relation $c_{*} \lambda_{*}= (\lambda_{*})^2 +f'(0)$ and $c_{*}=2 \lambda_{*}$

\begin{equation}
\begin{split}
&z^{i}_{t}+\frac{\left( \frac{3}{2}+i-k\right)}{\lambda_{*}(t+t_0)}(z^{i}_{x}-\lambda_{*}z^{i})= z^{i}_{xx}+\frac{\alpha}{(t+t_0)} z^{i+1}(t,x+\frac{\ln(t+t_0)}{\lambda_{*}}) \text{ for }  1 \leq i \leq k-1, x >0 \\
& z^{k}_{t}+\frac{\frac{3}{2}}{\lambda_{*}(t+t_0)}(z^{k}_{x}-\lambda_{*}z^{k})= z^{k}_{xx}, x>0 \\
&z^{i}(t,0)=0 \text{ for }  i \in \{1,\ldots,k\}.  \\
\end{split}
\end{equation}

We then go into self-similar variables, setting

$$
z^{i}(t,x)=p^{i}(\ln\left(t+t_{0}\right)-\ln t_{0},\frac{x}{\sqrt{t+t_{0}}})=p^{i}(\tau,\eta) \text{ for } i \in \{1,\ldots,k\}
$$

and our equations become for $
\mathcal{L}v=-v_{\eta \eta}-\frac{\eta}{2} v_{\eta}-v
$

\begin{equation}
\begin{split}
&p^{i}_{\tau}+\mathcal{L}p^{i} =\left(\frac{3}{2}+i-1-k\right)p^{i}-\varepsilon\left(\frac{3}{2}+i-k\right)e^{-\frac{\tau}{2}}p^{i}_{\eta} +\alpha p^{i+1}(\tau,\eta+\delta(\tau)), \eta>0 \\
& p^{k}_{\tau}+\mathcal{L} p^{k}= \frac{p^{k}}{2} -\frac{3}{2}\varepsilon e^{-\frac{\tau}{2}}p^{k}_{\eta}, \eta>0 \\
&p^{i}(\tau,0)=0 \text{ for }  i \in \{1,\ldots,k\}  \\
\end{split}
\end{equation} 

where we set $\delta(\tau)=\frac{\tau+\ln (t_0)}{\lambda_{*}\sqrt{t_0 e^{\tau}}}$ for the shift factor and note that $\delta(\tau) \rightarrow 0$ as  $\tau \rightarrow \infty $ exponentially fast. We also set $\varepsilon=\frac{1}{\lambda_{*}\sqrt{t_0}}$, which will be taken small later, by choosing big enough $t_0$. The corresponding term will be treated as a perturbation. We do one more change of variables to symmetrize the operator $\mathcal{L}.$

\begin{equation}
p^{i}(\tau,\eta)=w^{i}(\tau,\eta) e^{\frac{-{\eta}^2}{8}} e^{\frac{\tau}{2}} \text{ for } i\in \{1,\ldots,k\}
\end{equation}

so that our system becomes for $
M w=-w_{\eta \eta}+\left(\frac{{\eta}^{2}}{16}-\frac{3}{4}\right) w,
$

\begin{equation} \label{cascadingMoperator}
\begin{split}
&w^{i}_{\tau} +M w^{i} +(k-i)w^{i}=-\left(\frac{3}{2}+i-k \right)\varepsilon e^{-\frac{\tau}{2}}\left(w_{\eta}^{i}-\frac{\eta}{4}w^{i}\right)+\alpha w^{i+1}(\tau,\eta+\delta(\tau))e^{-\frac{\delta(\tau)^2}{8} }e^{-\frac{\eta \delta(\tau)}{4}}, \eta>0 \\
& w^{k}_{\tau}+M w^{k}= -\frac{3}{2} \varepsilon e^{-\frac{\tau}{2}}\left(w^{k}_{\eta}-\frac{\eta}{4}w^{k}\right), \eta>0 \\
&w^{i}(\tau,0)=0 \text{ for }  i \in \{1,\ldots,k\}.  \\
\end{split}
\end{equation}

We introduce the quadratic form

$$
Q(w)=\langle M w, w\rangle_{L^{2}((0,+\infty))}=\int_{0}^{+\infty}\left(w_{\eta}^{2}+\left(\frac{{\eta}^{2}}{16}-\frac{3}{4}\right) w^{2}\right) d \eta
$$
in the space $H_{0}^{1}((0,+\infty))$ with $\eta w \in L^{2}((0,+\infty))$. The operator $M$ is self-adjoint with non-negative eigenvalues $\lambda_k=k, k \in \mathbb{N}$,  and the principal eigenfunction corresponding to the zero eigenvalue is $e_{0}(\eta)=\eta e^{-{\eta}^{2} / 8} /(2 \sqrt{\pi})^{1 / 2}$. Moreover, the spectral gap gives us
$
Q(w) \geq\|w\|_{L^{2}((0,+\infty))}^{2} \text { in } e_{0}^{\perp}.
$

The Lemmas that follow will let us establish the desired asymptotics for $V^{i}$ of Proposition 
\ref{asymptoticslinearlemma}.
\begin{lemma} \label{boundedl2normlemma}
There exists a constant $C>0$ such that 
 $\|w^{i}\|_{L^{2}} \leq C $ for all $i \in \{1,\ldots,k\}$.
\end{lemma}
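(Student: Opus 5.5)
The proof is a downward induction on $i$, from $i=k$ to $i=1$, using $L^2$ energy estimates for system \eqref{cascadingMoperator}. The estimate holds uniformly in $\tau\ge 0$, with $C$ depending on the initial data, $\alpha$ and $k$.

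\emph{Base case $i=k$.} Multiply the $w^k$ equation by $w^k$ and integrate over $(0,\infty)$. The boundary term from $\int w^k w^k_\eta$ vanishes because of the Dirichlet condition, and the term $\frac{\eta}{4}(w^k)^2$ is controlled by $\frac{\eta^2}{32}(w^k)^2+\frac12 (w^k)^2$. Choosing $\varepsilon$ small (that is, $t_0$ large), we obtain
\[
\tfrac12\tfrac{d}{d\tau}\|w^k\|^2+Q(w^k)\le C\varepsilon e^{-\tau/2}\big(\|w^k\|^2+\|\eta w^k\|^2\big).
\]
Since $Q(w)\ge \int(\frac{\eta^2}{16}-\frac34)w^2$, the $\|\eta w^k\|^2$ term on the right is absorbed into $Q(w^k)$ up to a multiple of $\|w^k\|^2$, and what remains of $Q$ is nonnegative. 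Hence $\frac{d}{d\tau}\|w^k\|^2\le C\varepsilon e^{-\tau/2}\|w^k\|^2$. Because $e^{-\tau/2}$ is integrable, Gronwall's inequality gives $\|w^k\|\le e^{C\varepsilon}\|w^k(0)\|$. This is essentially the argument of Lemma 2.2 in \cite{short_proof}.

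\emph{Inductive step, $i<k$, assuming $\|w^{i+1}\|\le C_{i+1}$.} The equation for $w^i$ contains the extra damping term $(k-i)w^i$ with $k-i\ge1$. Repeating the energy estimate gives
\[
\tfrac12\tfrac{d}{d\tau}\|w^i\|^2+(k-i)\|w^i\|^2\le C\varepsilon e^{-\tau/2}\|w^i\|^2+\alpha\Big|\int_0^\infty w^{i+1}(\tau,\eta+\delta)e^{-\delta^2/8}e^{-\eta\delta/4}w^i\,d\eta\Big|.
\]
Since $\delta(\tau)>0$, we have $e^{-\eta\delta/4}\le1$ on $\eta>0$. Moreover, the shifted function $\eta\mapsto w^{i+1}(\tau,\eta+\delta)$ on $(0,\infty)$ has $L^2$ norm at most $\|w^{i+1}\|$. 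Cauchy--Schwarz therefore bounds the forcing term by $\alpha C_{i+1}\|w^i\|$. Setting $y=\|w^i\|$ (or working with $y^2$ to avoid dividing by zero) yields
\[
y'\le -(k-i-C\varepsilon)\,y+\alpha C_{i+1}.
\]
With $\varepsilon$ small this gives $y\le \max\big(y(0),\,2\alpha C_{i+1}/(k-i)\big)$, which closes the induction.

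\emph{Main obstacle.} The delicate point is handling the perturbation $-(\frac32+i-k)\varepsilon e^{-\tau/2}(w_\eta-\frac\eta4 w)$. Its coefficient may be negative or large in magnitude. The $w_\eta w$ part integrates to zero, but the $\eta w^2$ part requires the confining potential: I will bound $\int \eta w^2\le \int \frac{\eta^2}{16}w^2+4\int w^2$ and absorb the first piece into $Q(w)$, using that $\varepsilon\, |k-i-\frac32|$ is small. This forces $t_0$ to be chosen large, depending on $k$. Once that is done, the decay is supplied by the spectral term and the damping $(k-i)$.
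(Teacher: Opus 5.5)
Your proposal is correct and follows essentially the same route as the paper. Both arguments use downward induction from $i=k$ and an $L^2$ energy estimate, in which the damping $(k-i)\|w^i\|_{L^2}^2$ and Cauchy--Schwarz on the shifted, weighted coupling term close the bound. The differences are minor. The paper cites \cite{short_proof} for the base case and splits by the sign of $\frac32+i-k$: it absorbs the $\eta (w^i)^2$ term into $Q$ only for $i=k-1$ and discards it when $\frac32+i-k<0$, where it has the favorable sign. You instead prove the base case directly and absorb that term uniformly, which also works but makes the required smallness of $\varepsilon$ depend on $k$ unnecessarily.
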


\begin{proof}

Take $i\in \{1,\cdots,k-1\}$. 
Suppose that there exists a constant $C>0$ such that $\|w^{i+1}(\tau)\|_{L^{2}} \leq C$.
We will show that then there exists a constant $C>0$ such that $\|w^{i}(\tau)\|_{L^{2}} \leq C$.

Multiplying the equation \eqref{cascadingMoperator} by $w^{i}$ and integrating gives us

\small
\begin{equation}
\begin{split}
& \frac{d}{d \tau}\|w^{i}\|_{L^{2}}^{2}+2 Q(w^{i})+ 2(k-i)\|w^{i}\|_{L^{2}}^{2} =2 \varepsilon \left( \frac{3}{2}+i-k \right) e^{-\frac{\tau}{2}} \int_{0}^{+\infty} \frac{\eta}{4} (w^{i})^{2} d \eta \\
& +2 \alpha e^{-\frac{\delta(\tau)^{2}}{8} } \int_{0}^{+\infty} w^{i}w^{i+1}(\tau,\eta+\delta(\tau))  e^{-\frac{\eta \delta(\tau)}{4}}  d \eta \\
\end{split}
\end{equation}
\normalsize

We first work out the case $i=k-1$, which is identical to the system \eqref{mysystem}. We have $\frac{3}{2}+i-k = \frac{1}{2}>0 $. 

\small
\begin{equation}
\begin{split}
& \int_{0}^{+\infty} \frac{\eta}{4} (w^{k-1})^{2} d \eta= \int_{0}^{+\infty} 2 \frac{\eta w^{k-1}}{4} \frac{w^{k-1}}{2} d \eta \leq \int_{0}^{+\infty}  \frac{\eta^2 (w^{k-1})^{2}}{16} +\frac{(w^{k-1})^{2}}{4} d \eta \\
&\leq \int_{0}^{+\infty} \left( \frac{\eta^2}{16} -\frac{3}{4}\right) (w^{k-1})^{2} +(w^{k-1})^{2} d \eta \leq Q(w^{k-1})+ \|w^{k-1}\|_{L^{2}}^{2} 
\end{split}
\end{equation}
\normalsize
so that
\small
\begin{equation}
\begin{split}
&\frac{d}{d \tau}\|w^{k-1}\|_{L^{2}}^{2}+2\left(1-\frac{1}{2}\varepsilon e^{-\tau / 2}\right) Q(w^{k-1}) +2\left(1-\frac{1}{2}\varepsilon e^{-\tau / 2}\right)\|w^{k-1}\|_{L^{2}}^{2} \\
&\leq 2 \alpha e^{-\frac{\delta(\tau)^2}{8}} \int_{0}^{+\infty} w^{k-1} w^{k}(\tau,\eta+\delta(\tau))  e^{-\frac{\eta \delta(\tau)}{4}} d \eta \leq \|w^{k-1}\|_{L^{2}}^{2}  + \|w^{k}\|_{L^{2}}^{2} \alpha^2 \\
\end{split}
\end{equation}
\normalsize
We consider $\varepsilon< \frac{1}{2}$. We have assumed that 
$
\|w^{k}\|_{L^{2}}$ is bounded, say
$
\|w^{k}\|_{L^{2}}^{2} \leq C $ for some $C>0$ that only depends on the initial data, and so we get 

$$
\frac{d}{d \tau}\|w^{k-1}\|_{L^{2}}^{2}+  \frac{1}{2}\|w^{k-1}\|_{L^{2}}^{2} \leq   C \alpha^2
$$

We can thus conclude that 
$
\|w^{k-1}\|_{L^{2}}$ is bounded by some constant $C$ that only depends on the initial data and $\alpha$.

Now we work out the cases $w^{i}$ with $i=1,\cdots,k-2$.
For those $i$, we have $\frac{3}{2}+i-k <0$ and the equations

\small
\begin{equation}
\begin{split}
& \frac{d}{d \tau}\|w^{i}\|_{L^{2}}^{2}+2 Q(w^{i})+ 2(k-i)\|w^{i}\|_{L^{2}}^{2} =2 \varepsilon \left( \frac{3}{2}+i-k \right) e^{-\frac{\tau}{2}} \int_{0}^{+\infty} \frac{\eta}{4} (w^{i})^{2} d \eta  \\
& +2 \alpha e^{-\frac{\delta(\tau)^2}{8}} \int_{0}^{+\infty} w^{i}w^{i+1}(\tau,\eta+\delta(\tau))  e^{-\frac{\eta \delta(\tau)}{4}} d \eta \\
\end{split}
\end{equation}
\normalsize

so we get the bound
\small
$$
\frac{d}{d \tau}\|w^{i}\|_{L^{2}}^{2}+2 Q(w^{i})+ 2(k-i)\|w^{i}\|_{L^{2}}^{2} \leq 2 \alpha e^{-\frac{\delta(\tau)^2}{8} } \int_{0}^{+\infty} w^{i}w^{i+1}(\tau,\eta+\delta(\tau))  e^{-\frac{\eta \delta(\tau)}{4}} d \eta
$$
\normalsize

and thus 
$$
\frac{1}{2}\frac{d}{d \tau}\|w^{i}\|_{L^{2}}^{2}+ (k-i)\|w^{i}\|_{L^{2}}^{2} \leq  \alpha \int_{0}^{+\infty} w^{i}w^{i+1}(\tau,\eta+\delta(\tau)) d \eta \leq \frac{\|w^{i}\|_{L^{2}}^{2}}{2} + \frac{\|w^{i+1}\|_{L^{2}}^{2} \alpha^2}{2}
$$
We have supposed that 
$
\|w^{i+1}\|_{L^{2}}^{2}$ is bounded for all times $\tau$, so we get 
$$
\frac{d}{d \tau}\|w^{i}\|_{L^{2}}^{2}+  (2k-2i-1)\|w^{i}\|_{L^{2}}^{2} \leq   C \alpha^2.
$$
Since  $2k-2i-1>1$ we get that  $
\|w^{i}(\tau)\|_{L^{2}}$ is bounded for all times $\tau$ by a constant that depends on the initial data and $\alpha$. 

We know from Lemma 2.2 in \cite{short_proof} that $\|w^{k}\|_{L^{2}}$ is bounded. By induction, we get that  $\|w^{i}\|_{L^{2}}$ is bounded for all $i \in \{1,\ldots,k-1\}$.
\end{proof}

\begin{lemma}
\label{lemma:Qwi-bounded}
There exists a constant $C>0$ such that 
\[
Q(w^{i}(\tau))
\le C
\quad\text{for all } \tau\ge0 \text{ and all } i\in\{1,\ldots,k\}.
\]
\end{lemma}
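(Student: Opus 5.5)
We argue by downward induction on $i$, as in Lemma \ref{boundedl2normlemma}, and start from the fact that $Q(w^{k})$ is bounded, which is part of Lemma 2.2 in \cite{short_proof}. The natural tool is an energy identity for $Q$: we multiply equation \eqref{cascadingMoperator} for $w^{i}$ by $w^{i}_{\tau}$ and integrate over $(0,+\infty)$. Since $M$ is self-adjoint and $w^{i}_\tau$ vanishes at $\eta=0$, this gives
\[
\|w^{i}_\tau\|_{L^2}^2+\frac12\frac{d}{d\tau}\Big(Q(w^{i})+(k-i)\|w^{i}\|_{L^2}^2\Big)
= -\Big(\tfrac32+i-k\Big)\varepsilon e^{-\tau/2}\int_0^\infty\Big(w^{i}_\eta-\tfrac{\eta}{4}w^{i}\Big)w^{i}_\tau\,d\eta
+\alpha\int_0^\infty F^{i}\,w^{i}_\tau\,d\eta ,
\]
where $F^{i}(\tau,\eta)=w^{i+1}(\tau,\eta+\delta(\tau))e^{-\delta^2/8}e^{-\eta\delta/4}$.

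We bound both right-hand terms by Cauchy--Schwarz and absorb $\frac12\|w^{i}_\tau\|^2$ into the left. The first term needs $\|w^{i}_\eta\|^2+\|\eta w^{i}\|^2$. Since $\frac{\eta^2}{16}-\frac34\ge \frac{\eta^2}{32}-C$, both of these are controlled by $C(Q(w^{i})+\|w^{i}\|^2)$, and $\|w^{i}\|^2$ is bounded by Lemma \ref{boundedl2normlemma}. The forcing term needs $\|F^{i}\|_{L^2}^2\le\|w^{i+1}\|_{L^2}^2$, which is bounded. Together these give
\[
\frac{d}{d\tau}Q(w^{i}) \le C\varepsilon^2 e^{-\tau}\big(Q(w^{i})+1\big)+C\alpha^2 .
\]
This inequality is not good enough on its own: the constant term $C\alpha^2$ would allow linear growth of $Q(w^i)$.

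To prevent that growth, we combine the $Q$-identity with the $L^2$ identity from the proof of Lemma \ref{boundedl2normlemma}, which contains the term $-2Q(w^{i})$. The argument uses $E(\tau)=Q(w^{i})+K\|w^{i}\|^2$ for a suitable $K$, or equivalently the following route. Integrating the $L^2$ inequality over $[\tau,\tau+1]$ shows that $\int_\tau^{\tau+1}Q(w^{i})\,ds\le C$ uniformly in $\tau$, because $\|w^{i}\|^2$ and $\|w^{i+1}\|^2$ are bounded. The term $2\varepsilon(\frac32+i-k)e^{-\tau/2}\int\frac{\eta}{4}(w^{i})^2$ is handled as in the case $i=k-1$ of that lemma, using $\varepsilon<\frac12$; for $i\le k-2$ it has a favorable sign. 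For any $\tau\ge1$ we then pick $s\in[\tau-1,\tau]$ with $Q(w^{i}(s))\le C$ and integrate the differential inequality for $Q$ from $s$ to $\tau$. The Grönwall factor $\exp(C\varepsilon^2\int e^{-\tau})$ is bounded, so $Q(w^{i}(\tau))\le C$. For $\tau\in[0,1]$, the bound comes from standard parabolic regularity with $w^{i}(0)\in H^1_0$ and $\eta w^i(0)\in L^2$. This requires regularized initial data; the indicator data used earlier can be mollified, or we can start at a small positive time.

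The main obstacle is the forcing term, which involves the shifted $w^{i+1}(\cdot+\delta)$ together with the factor $e^{-\eta\delta/4}$. If we instead integrated by parts, moving $\partial_\tau$ onto $F^{i}$, we would need bounds on $Q(w^{i+1})$ and on $\delta'$. The Cauchy--Schwarz route above avoids this and only needs the $L^2$ bound on $w^{i+1}$. Care is also needed with the sign of $\frac32+i-k$, which differs between $i=k-1$ and $i\le k-2$; in both cases the drift term carries the factor $\varepsilon e^{-\tau/2}$, so it only enters the Grönwall factor.
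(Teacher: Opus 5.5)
Your argument is correct, but it is organized differently from the paper's.

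\textbf{The paper's route.} The paper computes $\frac{d}{d\tau}Q(w^i)=2\langle Mw^i,w^i_\tau\rangle$ and substitutes the equation for $w^i_\tau$. In effect it uses $Mw^i$ as the multiplier rather than $w^i_\tau$. This produces the dissipation $-2\|Mw^i\|_{L^2}^2-2(k-i)Q(w^i)$. Since the spectrum of $M$ is $\{0,1,2,\dots\}$, one has $\|Mw\|_{L^2}^2\ge Q(w)$. After Young's inequality on the drift term and on $\langle Mw^i,\mathcal{T}_i[w^{i+1}]\rangle$, this gives the damped inequality $\frac{d}{d\tau}Q(w^i)\le-(1-C\varepsilon)Q(w^i)+C$, and ordinary Gr\"onwall finishes.

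\textbf{Your route.} Your multiplier $w^i_\tau$ yields the dissipation $\|w^i_\tau\|^2$, which is not coercive in $Q$. That is why you need a second ingredient: the time-averaged bound $\int_{\tau-1}^{\tau}Q(w^i)\,ds\le C$ from the $L^2$ identity, fed into a uniform Gr\"onwall argument. The Gr\"onwall factor stays bounded because the drift enters squared, as $\varepsilon^2e^{-\tau}$.

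\textbf{Comparison.} The paper's route is shorter and gives exponential relaxation of $Q$. However, its conclusion $Q(w^i(\tau))\le\max(Q(w^i(0)),C)$ implicitly requires $Q(w^i(0))<\infty$. That fails for the indicator initial data used later in the supersolution construction. Your route gives the bound for $\tau\ge 1$ from the $L^2$ bounds alone. You also correctly note that $\tau\in[0,1]$ needs mollified data or a positive starting time.

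\textbf{Two small points.}
\begin{itemize}
\item Before writing a differential inequality for $Q(w^i)$ alone, you must dispose of the term $\frac{k-i}{2}\frac{d}{d\tau}\|w^i\|^2$. Either work with $E=Q(w^i)+(k-i)\|w^i\|^2$ throughout, or absorb $(k-i)\langle w^i,w^i_\tau\rangle$ into $\|w^i_\tau\|^2$ by Young.
\item No induction on $Q$ is needed. In both your proof and the paper's, only $\|w^{i+1}\|_{L^2}$ enters, which Lemma~\ref{boundedl2normlemma} already provides. So your argument handles $i=k$ directly, since there is no forcing, without appealing to \cite{short_proof}.
\end{itemize}
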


\begin{proof}
We first treat the last component $w^{k}$, and then $w^{i}$ for $i<k$.

\[
\frac{d}{d\tau}Q(w^{k})
=2\langle M w^{k},w^{k}_{\tau}\rangle
=-2\|Mw^{k}\|_{L^{2}}^{2}
-3\varepsilon e^{-\tau/2}\langle M w^{k},\,w^{k}_{\eta}-\tfrac{\eta}{4}w^{k}\rangle.
\]
We have, 
\[
\varepsilon \big|\langle M w^{k},\,w^{k}_{\eta}-\tfrac{\eta}{4}w^{k}\rangle\big|
\le \tfrac12\|Mw^{k}\|_{L^{2}}^{2}+C\varepsilon \|w^{k}_{\eta}-\tfrac{\eta}{4}w^{k}\|_{L^{2}}^{2},
\]
so that
\[
\frac{d}{d\tau}Q(w^{k})
\le
-\|Mw^{k}\|_{L^{2}}^{2}+C\varepsilon \|w^{k}_{\eta}-\tfrac{\eta}{4}w^{k}\|_{L^{2}}^{2}.
\]
Expanding the last term and comparing with $Q(w^{k})$, we have
\[
\|w^{k}_{\eta}-\tfrac{\eta}{4}w^{k}\|_{L^{2}}^{2}
\le C \big(Q(w^{k})+\|w^{k}\|_{L^{2}}^{2}\big),
\]
hence
\begin{equation}
\label{eq:Qwk-ineq}
\frac{d}{d\tau}Q(w^{k})
\le
- \|Mw^{k}\|_{L^{2}}^{2}
+ C \varepsilon Q(w^{k})
+ C \|w^{k}\|_{L^{2}}^{2}.
\end{equation}
Using the spectral gap of $M$, namely
$
\|Mw\|_{L^{2}}^{2}\ge Q(w)
$
we obtain from \eqref{eq:Qwk-ineq}
\[
\frac{d}{d\tau}Q(w^{k})\le-(1-C\varepsilon)Q(w^{k})+C\|w^{k}\|_{L^{2}}^{2}.
\]
Since $\|w^{k}\|_{L^{2}}$ is uniformly bounded in $\tau$ and $1>C\varepsilon$ for $t_{0}$ large enough, we conclude that
\[
Q(w^{k}(\tau))\le C \quad \text{for all } \tau\ge0.
\]

\medskip
\noindent
For $w^{i}, i < k$ we set $$\mathcal{T}_{i}[w^{i+1}]=w^{i+1}(\tau, \eta + \delta(\tau))e^{-A(\tau)}e^{-B(\tau)\eta}$$
where $A(\tau)=\frac{\delta(\tau)^2}{8}$ and $B(\tau)=\frac{\delta(\tau)}{4}$. Then we can write the equation for $Q(w^{i})$ as
\begin{align*}
\frac{1}{2}\frac{d}{d\tau}Q(w^{i})
&=-\|Mw^{i}\|_{L^{2}}^{2}-(k-i)Q(w^{i})
-\Big(\tfrac{3}{2}+i-k\Big)\varepsilon e^{-\tau/2}
\langle M w^{i},\,w^{i}_{\eta}-\tfrac{\eta}{4}w^{i}\rangle +\alpha\,\langle M w^{i},\mathcal{T}_{i}[w^{i+1}]\rangle.
\end{align*}
The drift term is treated as before and yields
\[
-\Big(\tfrac{3}{2}+i-k\Big)\varepsilon e^{-\tau/2}
\langle M w^{i},\,w^{i}_{\eta}-\tfrac{\eta}{4}w^{i}\rangle
\le
\frac{\varepsilon}{2}\|Mw^{i}\|_{L^{2}}^{2}+C\varepsilon(Q(w^{i})+\|w^{i}\|_{L^{2}}^{2}).
\]
For the coupling term we have, 
\[
\alpha\,|\langle M w^{i},\mathcal{T}_{i}[w^{i+1}]\rangle|
\le
\tfrac14\|Mw^{i}\|_{L^{2}}^{2}
+ C \alpha^{2}\|\mathcal{T}_{i}[w^{i+1}]\|_{L^{2}}^{2}.
\]
Since $\mathcal{T}_{i}$ is a spatial shift and an exponentially decaying weight, we have
\[
\|\mathcal{T}_{i}[w^{i+1}]\|_{L^{2}}
\le C\|w^{i+1}\|_{L^{2}},
\]
and we know that $\|w^{i+1}\|_{L^{2}}$ is uniformly bounded in $\tau$. 
Collecting all terms, we obtain
\[
\frac{d}{d\tau}Q(w^{i})
\le
-\|Mw^{i}\|_{L^{2}}^{2}
+C \varepsilon Q(w^{i})
+C\|w^{i}\|_{L^{2}}^{2}
+C\|w^{i+1}\|_{L^{2}}^{2}.
\]
Using the spectral gap estimate $\|Mw^{i}\|_{L^{2}}^{2}\ge Q(w^{i})$ again, we obtain
\[
\frac{d}{d\tau}Q(w^{i})
\le
-(1-C \varepsilon)Q(w^{i})
+C\big(\|w^{i}\|_{L^{2}}^{2}+\|w^{i+1}\|_{L^{2}}^{2}\big).
\]
Since the $L^{2}$ norms of $w^{i}$ and $w^{i+1}$ are uniformly bounded, 
\[
Q(w^{i}(\tau))\le C
\quad \text{for all }\tau\ge0.
\]

\end{proof}

\begin{Corollary} \label{boundedderivative}
There exists $C>0$ such that $||w_{\eta}^{i}||_{L^2} \leq C$ for all $ i \in \{1, \cdots, k \}$.
\end{Corollary}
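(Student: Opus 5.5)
The corollary should follow from Lemma \ref{boundedl2normlemma} and Lemma \ref{lemma:Qwi-bounded}, using only the definition of the quadratic form. For $w^{i}$ in the form domain, namely $H_0^1((0,+\infty))$ with $\eta w^{i}\in L^2$, the definition of $Q$ gives
\[
\|w^{i}_{\eta}(\tau)\|_{L^{2}}^{2}
= Q(w^{i}(\tau)) - \int_{0}^{+\infty}\frac{\eta^{2}}{16}\,(w^{i})^{2}\,d\eta + \frac{3}{4}\|w^{i}(\tau)\|_{L^{2}}^{2}
\le Q(w^{i}(\tau)) + \frac{3}{4}\|w^{i}(\tau)\|_{L^{2}}^{2}.
\]
The potential term $\int \frac{\eta^2}{16}(w^{i})^2\,d\eta$ is nonnegative, so dropping it only enlarges the right-hand side.

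Next I would insert the two uniform bounds. Lemma \ref{lemma:Qwi-bounded} gives $Q(w^{i}(\tau))\le C$, and Lemma \ref{boundedl2normlemma} gives $\|w^{i}(\tau)\|_{L^2}\le C$. Both hold for all $\tau\ge 0$ and all $i\in\{1,\ldots,k\}$. Hence $\|w^{i}_\eta(\tau)\|_{L^2}^2\le C+\tfrac34 C^2$, and after enlarging the constant this is the claim. The same identity also shows that $\|\eta w^{i}\|_{L^2}$ is uniformly bounded, which should be useful later.

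The only point needing care is justifying the identity for $Q$ at every $\tau$, that is, checking that $w^{i}(\tau)$ lies in the form domain. For $\tau>0$ this should follow from parabolic regularity. At $\tau=0$, the initial data $e^{\eta^2/8}\mathbbm{1}_{[0,2A]}$ is compactly supported but not in $H^1_0$. In that case I would read the corollary for $\tau\ge\tau_0>0$, or smooth the initial data, since the bound on $Q$ in Lemma \ref{lemma:Qwi-bounded} already presupposes this. I expect no further obstacle.
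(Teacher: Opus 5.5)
Your proposal is correct and matches the paper's proof: you drop the nonnegative $\frac{\eta^2}{16}(w^{i})^2$ term from $Q$ to get $\|w^{i}_{\eta}\|_{L^2}^2\le Q(w^{i})+\frac34\|w^{i}\|_{L^2}^2$, and then apply Lemmas \ref{boundedl2normlemma} and \ref{lemma:Qwi-bounded}. Your side remark about $\tau=0$ is a legitimate point that the paper passes over: the indicator initial data is not in $H^1_0$, so the $Q$-bound, and hence this corollary, should really be read for $\tau\ge\tau_0>0$ or after smoothing the data. (The exact transformed datum is $e^{\eta^2/8+\lambda_{*}\sqrt{t_0}\,\eta}\mathbbm{1}_{[0,2A/\sqrt{t_0}]}$, but this does not change your conclusion.)
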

\begin{proof}
Using Lemmas \ref{boundedl2normlemma}, \ref{lemma:Qwi-bounded}  we get directly

$$||w_{\eta}^{i}||_{L^2}^{2}=\int_0^{+\infty} |w_{\eta}^{i}|^{2} d \eta \leq Q(w^{i})+\frac{3}{4} ||w^{i}||_{L^2}^{2} \leq C.$$
\end{proof}

We can now decompose $w^{i}$ into its principal eigenfunction component and its orthogonal complement 
$$
w^{i}=q^{i}(\tau) e_{0} +\widehat{w^{i}}
$$
where $\widehat{w^{i}}$ is orthogonal to $e_0$ and $q^{i}(\tau)=\left \langle e_0, w^{i}(\tau) \right \rangle $.

Now we find an equation for $q^{i}(\tau)$. The following hold 

\begin{enumerate}
\item{$\left \langle e_0, M w^{i}(\tau) \right \rangle=\left \langle M e_0, w^{i}(\tau) \right \rangle=0$ since $M$ is symmetric, and $Me_{0}=0$. }
\newline 
\item{$\left \langle e_0, w^{i}_{\eta}-\frac{\eta}{4}w^{i} \right \rangle=-\left \langle (e_0)_{\eta}+\frac{\eta}{4}e_0, w^{i}(\tau) \right \rangle $}
\end{enumerate}

Using these facts and taking the inner product with $e_0$ in the equation for $w^{i}$ gives us

\begin{equation} \label{equationforqi}
\begin{split}
q^{i}_{\tau}+(k-i)q^{i}(\tau)&=\left( \frac{3}{2}+i-k \right) \varepsilon e^{-\frac{\tau}{2}}\left \langle (e_0)_{\eta}+\frac{\eta}{4}e_0, w^{i}(\tau) \right \rangle +\alpha \left\langle e_0, \mathcal{T}_{i}[w^{i+1}] \right\rangle \\
q^{k}_{\tau}&= \frac{3}{2} \varepsilon e^{-\frac{\tau}{2}}\left \langle (e_0)_{\eta}+\frac{\eta}{4}e_0, w^{k}(\tau) \right \rangle\\
\end{split}
\end{equation}
Let us set
$$
r_{i}(\tau)=\left \langle (e_0)_{\eta}+\frac{\eta}{4}e_0, w^{i}(\tau) \right \rangle, h_{i}(\tau)= \left\langle e_0, \mathcal{T}_{i}[w^{i+1}]\right\rangle.
$$

to write our equations as

$$
q^{i}_{s}+(k-i)q^{i}(s)=\left( \frac{3}{2}+i-k \right)\varepsilon e^{-\frac{s}{2}}r_{i}(s) +\alpha h_{i}(s)
$$

After multiplying with $e^{(k-i)s}$ and integrating we get

$$
 q^{i}(\tau)=e^{-(k-i)\tau}q^{i}(0)+ \left(\frac{3}{2}+i-k \right) \varepsilon e^{-(k-i)\tau} \int_{0}^{\tau} e^{(k-i-\frac{1}{2})s}r_{i}(s) \mathrm{d} s +\alpha e^{-(k-i)\tau}\int_{0}^{\tau} e^{(k-i)s} h_{i}(s) \mathrm{d} s
$$
From \cite{short_proof} we know that $$ | q^{k}(\tau)-q^{k}(0) | \leq C\varepsilon $$ where $\varepsilon =\frac{1}{\sqrt{t_0}}$. We will show more generally that $q^{i}(\tau)$ is close to $\frac{\alpha^{k-i}}{(k-i)!}q^{k}(0)$ in the following Lemma.

\begin{lemma} \label{qiclosetoqk}
For $i=1,\ldots,k-1$ there exist constants $C_1,C_2$ such that 
\begin{equation*}
| q^{i}(\tau)-\frac{\alpha^{k-i}}{(k-i)!} q^{k}(0) | \leq C_1 \varepsilon + C_2 (1+\tau) e^{-\frac{\tau}{2}}  
\end{equation*}
\end{lemma}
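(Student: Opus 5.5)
The plan is a downward induction on $i$, from $i=k-1$ to $i=1$, using the Duhamel formula for $q^{i}(\tau)$ written just before the statement. The induction hypothesis for $i+1$ is
\[
\Big|q^{i+1}(s)-\tfrac{\alpha^{k-i-1}}{(k-i-1)!}q^{k}(0)\Big|\le C_1\varepsilon+C_2(1+s)e^{-s/2}.
\]
For the base case $i+1=k$ I would use the bound $|q^{k}(s)-q^{k}(0)|\le C\varepsilon$ from \cite{short_proof}. The formula for $q^{i}(\tau)$ has three terms, and I would estimate them one at a time.

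\emph{The first two terms.} The initial term $e^{-(k-i)\tau}q^{i}(0)$ is at most $Ce^{-\tau}$, since $k-i\ge1$. This is absorbed into $C_2(1+\tau)e^{-\tau/2}$.

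For the drift term, I first bound $r_i(s)=\langle (e_0)_\eta+\tfrac{\eta}{4}e_0,w^{i}(s)\rangle$ uniformly in $s$ by Cauchy--Schwarz and Lemma~\ref{boundedl2normlemma}. This works because $(e_0)_\eta+\tfrac{\eta}{4}e_0$ is a fixed Gaussian-weighted function in $L^2$. Since $k-i-\tfrac12>0$, we get
\[
\varepsilon e^{-(k-i)\tau}\int_0^\tau e^{(k-i-\frac12)s}\,ds\le C\varepsilon e^{-\tau/2}\le C\varepsilon .
\]

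\emph{The coupling term.} I write $h_i(s)=q^{i+1}(s)+\big(h_i(s)-q^{i+1}(s)\big)$. The difference equals $\langle \mathcal{T}_i^{*}e_0-e_0,\,w^{i+1}(s)\rangle$. Here the adjoint is
\[
\mathcal{T}_i^{*}e_0(\eta)=e_0(\eta-\delta)\,e^{-A(s)}e^{-B(s)(\eta-\delta)}\,\mathbbm{1}_{\eta>\delta}.
\]
Since $e_0$ is smooth, vanishes at $0$ and decays like a Gaussian, and since $A=\delta^2/8$ and $B=\delta/4$, we get $\|\mathcal{T}_i^{*}e_0-e_0\|_{L^2}\le C\delta(s)$. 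Lemma~\ref{boundedl2normlemma} then gives $|h_i(s)-q^{i+1}(s)|\le C\delta(s)$, with
\[
\delta(s)=\frac{s+\ln t_0}{\lambda_*\sqrt{t_0}}\,e^{-s/2}\le C\varepsilon\ln t_0\,e^{-s/2}+C\varepsilon\, s\,e^{-s/2}.
\]
If needed, I would use Corollary~\ref{boundedderivative} instead, moving the shift onto $w^{i+1}$ via $\|w(\cdot+\delta)-w\|_{L^2}\le\delta\|w_\eta\|_{L^2}$.

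Next I insert the induction hypothesis. The main part $\tfrac{\alpha^{k-i-1}}{(k-i-1)!}q^{k}(0)$ integrates exactly:
\[
\alpha e^{-(k-i)\tau}\int_0^\tau e^{(k-i)s}\,ds\cdot\tfrac{\alpha^{k-i-1}}{(k-i-1)!}q^{k}(0)=\tfrac{\alpha^{k-i}}{(k-i)!}q^{k}(0)\big(1-e^{-(k-i)\tau}\big).
\]
This yields the target constant up to an $O(e^{-\tau})$ error. The remaining errors are $C_1\varepsilon$ and $(1+s)e^{-s/2}$, the latter coming from both the induction hypothesis and $\delta$. They pass through the convolution kernel $e^{-(k-i)(\tau-s)}$. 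Because $k-i>\tfrac12$,
\[
e^{-(k-i)\tau}\int_0^\tau e^{(k-i)s}(1+s)e^{-s/2}\,ds\le C(1+\tau)e^{-\tau/2},
\]
and the $\varepsilon$ part gives $\alpha C_1\varepsilon/(k-i)$. This closes the induction, with new constants depending on $\alpha$, $k$ and the initial data.

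\emph{Main obstacle.} The delicate point is the $\ln t_0$ factor in $\delta(s)$. The term $\varepsilon\ln t_0\,e^{-s/2}$ must be absorbed into $C_2(1+s)e^{-s/2}$ with $C_2$ independent of $t_0$, or else into $C_1\varepsilon$ form. I would use that $\varepsilon\ln t_0=\ln t_0/(\lambda_*\sqrt{t_0})$ is bounded uniformly for $t_0\ge1$, and is in fact small. I also need the constants in Lemma~\ref{boundedl2normlemma} and Corollary~\ref{boundedderivative} to be uniform in $t_0$ for $t_0$ large. I would check this by re-reading their proofs: the constants there depend only on the initial data and $\alpha$ once $\varepsilon<\tfrac12$.
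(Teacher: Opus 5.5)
Your proposal is correct and follows the paper's proof: downward induction on the Duhamel formula for $q^{i}$, the drift term bounded via Lemma~\ref{boundedl2normlemma}, the coupling term split into $h_i-q^{i+1}$ plus the inductive error, the bound $|h_i-q^{i+1}|\le C\delta(s)$, and both errors propagated through the kernel $e^{-(k-i)(\tau-s)}$ using $k-i>\tfrac12$. The only variation is that you estimate $h_i-q^{i+1}$ by moving $\mathcal{T}_i$ onto $e_0$ through its adjoint. This needs only Lemma~\ref{boundedl2normlemma}, whereas the paper splits into $T_1,T_2$ and uses Corollary~\ref{boundedderivative}; you also make explicit the absorption of the $\varepsilon\ln t_0$ term, which the paper leaves implicit.
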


\begin{proof}
    
We already know $| q^{k}(\tau)-q^{k}(0) | \leq C\varepsilon $ from Lemma 2.2 in \cite{short_proof}, where $\varepsilon=\frac{1}{\sqrt{t_0}}$, so it holds for $i=k$. We will suppose that it holds for $q^{i+1}$ and then show that it holds for $q^{i}$. 
\newline 

We have
\begin{equation}
\begin{split}
 & |q^{i}(\tau)-\frac{\alpha^{k-i}}{(k-i)!} q^{k}(0)|\leq e^{-(k-i)\tau}|q^{i}(0)|+ |\left(\frac{3}{2}+i-k \right) \varepsilon e^{-(k-i)\tau} \int_{0}^{\tau} e^{(k-i-\frac{1}{2})s}r_{i}(s) \mathrm{d} s | \\
 & +|\alpha e^{-(k-i)\tau}\int_{0}^{\tau} e^{(k-i)s} h_{i}(s) \mathrm{d} s-\frac{\alpha^{k-i}}{(k-i)!}q^{k}(0)|
\end{split} 
\end{equation}

By Cauchy-Schwarz, since we already showed that the $L^2$ norm of $w^{i}$ is bounded we get 

$$ |r_{i}(\tau)|=|\left \langle (e_0)_{\eta}+\frac{\eta}{4}e_0, w^{i}(\tau) \right \rangle | \leq C.$$

As a result, we can bound the second term
\small

\begin{equation}
\begin{split}
&|\left(\frac{3}{2}+i-k \right)\varepsilon e^{-(k-i)\tau} \int_{0}^{\tau} e^{(k-i-\frac{1}{2})s}r_{i}(s) \mathrm{d} s| \leq C |\left(\frac{3}{2}+i-k \right)|\varepsilon e^{-(k-i)\tau} \int_{0}^{\tau} e^{(k-i-\frac{1}{2})s}\mathrm{d} s  \leq C\varepsilon e^{-\frac{\tau}{2}}
\end{split}
\end{equation}

\normalsize

Next,  we bound the third term. 

We can write

$$-\frac{\alpha^{k-i}}{(k-i)!} q^{k}(0)=-\frac{\alpha^{k-i}}{(k-i-1)!} e^{-(k-i)\tau}\int_{0}^{\tau} e^{(k-i)s} q^{k}(0) \mathrm{d} s -\frac{\alpha^{k-i}}{(k-i)!} q^{k}(0)e^{-(k-i)\tau} $$

and get 

\begin{equation}
\begin{split}
&|\alpha e^{-(k-i)\tau} \int_{0}^{\tau}  e^{(k-i)s} h_{i}(s)\mathrm{d} s -\frac{\alpha^{k-i}}{(k-i)!} q^{k}(0)|=\\
&|\alpha e^{-(k-i)\tau} \int_{0}^{\tau}  e^{(k-i)s} (h_{i}(s)-q^{i+1}(s))\mathrm{d} s +\alpha e^{-(k-i)\tau} \int_{0}^{\tau}  e^{(k-i)s} (q^{i+1}(s)-\frac{\alpha^{k-i-1}}{(k-i-1)!} q^{k}(0))\mathrm{d} s \\
&-\frac{\alpha^{k-i}}{(k-i)!} q^{k}(0)e^{-(k-i)\tau}| \\
&\leq \alpha e^{-(k-i)\tau}\int_{0}^{\tau}|h_{i}(s)-q^{i+1}(s)|e^{(k-i)s} \mathrm{d} s +\alpha e^{-(k-i)\tau}\int_{0}^{\tau}|q^{i+1}(s)-\frac{\alpha^{k-i-1}}{(k-i-1)!}q^{k}(0)|e^{(k-i)s} \mathrm{d} s  \\
&+\frac{\alpha^{k-i}}{(k-i)!}|q^{k}(0)|e^{-(k-i)\tau} \\
\end{split}
\end{equation}

Now we need to bound $|h_{i}(s)-q^{i+1}(s)|$. We have that there exists a constant $C>0$ such that
\[
|h_i(s)-q^{i+1}(s)| \le C\,\delta(s)
\quad \text{for all } s\ge 0.
\]

Indeed, we  write
\begin{equation}
\label{eq:hi-minus-qi+1-start}
h_i(s)-q^{i+1}(s)
=\int_{0}^{\infty} e_0(\eta)
\Big[
\mathcal{T}_{i}[w^{i+1}](s,\eta)
-
w^{i+1}(s,\eta)
\Big]
\,d\eta.
\end{equation}

We add and subtract $w^{i+1}(s,\eta+\delta(s))$ inside the bracket and split
\[
h_i(s)-q^{i+1}(s)=T_1(s)+T_2(s),
\]
where
\[
T_1(s)=\int_{0}^{\infty} e_0(\eta)
\Big(
w^{i+1}(s,\eta+\delta(s))-w^{i+1}(s,\eta)
\Big)\,d\eta,
\]
\[
T_2(s)=\int_{0}^{\infty} e_0(\eta)\,
w^{i+1}(s,\eta+\delta(s))
\Big(
e^{-A_s}e^{-B_s \eta}-1
\Big)\,d\eta.
\]

\medskip
\noindent
We can write
\[
w^{i+1}(s,\eta+\delta(s))-w^{i+1}(s,\eta)
=\int_{0}^{\delta(s)} \partial_{\eta} w^{i+1}(s,\eta+\theta)\,d\theta.
\]
Hence
\[
|T_1(s)|
\le
\int_{0}^{\delta(s)}
\int_{0}^{\infty} |e_0(\eta)|\,|\partial_{\eta} w^{i+1}(s,\eta+\theta)|\,d\eta\,d\theta.
\]
Using Cauchy-Schwarz in $\eta$ we get
\[
\int_{0}^{\infty} |e_0(\eta)|\,|\partial_{\eta} w^{i+1}(s,\eta+\theta)|\,d\eta
\le
\|e_0\|_{L^{2}}\,
\|\partial_{\eta} w^{i+1}(s,\cdot+\theta)\|_{L^{2}}
\le
C\,\|w^{i+1}_{\eta}(s,\cdot)\|_{L^{2}}.
\]
Therefore
\[
|T_1(s)|
\le
\delta(s)\,C\,\|w^{i+1}_{\eta}(s,\cdot)\|_{L^{2}}.
\]
From corollary \eqref{boundedderivative} we have shown that  $\|w^{i+1}_{\eta}(s,\cdot)\|_{L^{2}}$ is uniformly bounded in $s$. It follows that
\begin{equation}
\label{eq:T1bound}
|T_1(s)|\le C\,\delta(s).
\end{equation}

\medskip
\noindent
To estimate $T_{2}(s)$ we expand the exponential factor. Since
$
A_s=O(\delta(s)^2),
B_s=O(\delta(s)),
$
we write
\[
e^{-A_s}e^{-B_s \eta}-1
= -A_s - B_s \eta + R_s(\eta),
\]
where the remainder satisfies 
\[
|R_s(\eta)| \le C\,\delta(s)^2 (1+\eta^2).
\]
Then
\[
|T_2(s)|
\le
\int_{0}^{\infty} |e_0(\eta)|\,|w^{i+1}(s,\eta+\delta(s))|
\Big(
A_s + B_s \eta + C\,\delta(s)^2(1+\eta^2)
\Big)\,d\eta.
\]
Applying Cauchy-Schwarz in $\eta$,
\[
|T_2(s)|
\le
\|w^{i+1}\|_{L^{2}}
\Big\|
e_0(\eta)\big(
A_s + B_s \eta + C \delta(s)^2(1+\eta^2)
\big)
\Big\|_{L^{2}}.
\]

Since $e_0(\eta)=\eta e^{-\eta^2/8}$, 
$\eta e_0(\eta)$ and $\eta^2 e_0(\eta)$ are in $L^{2}(0,\infty)$.
Therefore
\[
\Big\|
e_0(\eta)\big(
A_s + B_s \eta + C \delta(s)^2(1+\eta^2)
\big)
\Big\|_{L^{2}}
\le
C\,\delta(s),
\]
since $A_s=O(\delta(s)^2)$ and $B_s=O(\delta(s))$.

We conclude
\begin{equation}
\label{eq:T2bound}
|T_2(s)| \le C\,\delta(s).
\end{equation}

\medskip
Combining \eqref{eq:T1bound} and \eqref{eq:T2bound} we obtain
\[
|h_i(s)-q^{i+1}(s)|
\le
|T_1(s)|+|T_2(s)|
\le
C\,\delta(s)
=
C\,\frac{s+\ln(t_0)}{\sqrt{t_0 e^{s}}},
\]
which is the desired estimate.

\small

\normalsize

We are now ready to bound the first part of the third term

\begin{equation}
\begin{split}
& \alpha e^{-(k-i)\tau}\int_{0}^{\tau}|h_{i}(s)-q^{i+1}(s)|e^{(k-i)s} \mathrm{d} s  \leq C_{2} \alpha e^{-(k-i)\tau}\int_{0}^{\tau}\frac{s+\ln(t_0)}{\sqrt{t_0 e^{s}}}e^{(k-i)s} \mathrm{d} s = \\
&=\frac{C_{2} \alpha}{\sqrt{t_0}} e^{-(k-i)\tau} \left( \int_{0}^{\tau} (s+\ln(t_0)) e^{(k-i-\frac{1}{2})s}  \mathrm{d} s \right) 
\end{split}
\end{equation}

\begin{equation}
\begin{split}
&\int_{0}^{\tau} (s+\ln(t_0)) e^{(k-i-\frac{1}{2})s}  \mathrm{d} s =\frac{1}{k-i-\frac{1}{2}}\tau e^{(k-i-\frac{1}{2})\tau}-\frac{1}{(k-i-\frac{1}{2})^{2}}\left( e^{(k-i-\frac{1}{2})\tau} -1\right)+\frac{\ln(t_0)}{k-i-\frac{1}{2}} \left( e^{(k-i-\frac{1}{2})\tau}-1\right)
\end{split}
\end{equation}

So we get

\begin{equation}
\begin{split}
&\alpha e^{-(k-i)\tau}\int_{0}^{\tau}|h_{i}(s)-q^{i+1}(s)|e^{(k-i)s} \mathrm{d} s \leq C_1 \varepsilon +C_2 (\tau+1) e^{-\frac{\tau}{2}} 
\end{split}
\end{equation}

In order to bound the second part of the third term, we use the inductive hypothesis for $q^{i+1}$

$$ | q^{i+1}(\tau)-\frac{\alpha^{k-i-1}}{(k-i-1)!} q^{k}(0) | \leq C_1 \varepsilon + C_2 (\tau+1) e^{-\frac{\tau}{2}}. $$

We write

\begin{equation}
\begin{split}
&\alpha e^{-(k-i)\tau}\int_{0}^{\tau}|q^{i+1}(s)-\frac{\alpha^{k-i-1}}{(k-i-1)!}q^{k}(0)|e^{(k-i)s} \mathrm{d} s \leq  \alpha e^{-(k-i)\tau}\int_{0}^{\tau} \left( C_1 \varepsilon + C_2 (s+1)e^{-\frac{s}{2}} \right) e^{(k-i)s} \mathrm{d} s \\
&\leq C  \varepsilon+C (\tau+1) e^{-\frac{\tau}{2}}.
\end{split}
\end{equation}

Putting these integrals together gives us the bound for the third term

\begin{equation}
\begin{split}
&|\alpha e^{-(k-i)\tau} \int_{0}^{\tau}  e^{(k-i)s} h_{i}(s)\mathrm{d} s -\frac{\alpha^{k-i}}{(k-i)!} q^{k}(0)| \leq  C_1 \varepsilon +C_2 (\tau+1) e^{-\frac{\tau}{2}}
\end{split}
\end{equation}

\normalsize

Combining everything, we get:
\small

\begin{equation}
\begin{split}
&|q^{i}(\tau)-\frac{\alpha^{k-i}}{(k-i)!} q^{k}(0)|\leq 
e^{-(k-i)\tau}|q^{i}(0)|+|\left(\frac{3}{2}+i-k \right)|\varepsilon e^{-(k-i)\tau} \int_{0}^{\tau} e^{(k-i-\frac{1}{2})s}|r_{i}(s)| \mathrm{d} s \\
&+ |\alpha e^{-(k-i)\tau}\int_{0}^{\tau} e^{(k-i)s} h_{i}(s) \mathrm{d} s-\frac{\alpha^{k-i}}{(k-i)!} q^{k}(0)| \leq C_1 \varepsilon + C_2 (\tau+1) e^{-\frac{\tau}{2}}
\end{split}
\end{equation}

where the constants $C_1,C_2$ depend on $\alpha$ and on the initial conditions.
\end{proof}
\normalsize
\begin{lemma} \label{boundedorthogonalremainderlemma}
There exists a constant $C=C(\alpha,\varepsilon)>0$ that depends on $\alpha, \varepsilon$ and the initial conditions such that $\|\widehat{w^{i}}\|_{L^2} \leq C (\tau +1)e^{-\frac{\tau}{2}}$.
\end{lemma}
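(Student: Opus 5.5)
We argue by an energy estimate on the orthogonal component, following the scheme of Lemma 2.2 in \cite{short_proof}, and induct downward in $i$ starting from $i=k$. For $i=k$ the bound is known from \cite{short_proof} (the forcing there is only the $O(\varepsilon e^{-\tau/2})$ drift term, giving $\|\widehat{w^{k}}\|_{L^2}\le Ce^{-\tau/2}$). So assume the bound for $\widehat{w^{i+1}}$ and prove it for $\widehat{w^{i}}$.

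\emph{Equation for the remainder.} Writing $P$ for the orthogonal projection onto $e_0^{\perp}$, we project \eqref{cascadingMoperator}. Since $M$ commutes with $P$, we get
\[
\widehat{w^{i}}_{\tau}+M\widehat{w^{i}}+(k-i)\widehat{w^{i}}=-\Big(\tfrac32+i-k\Big)\varepsilon e^{-\tau/2}P\big(w^{i}_{\eta}-\tfrac{\eta}{4}w^{i}\big)+\alpha P\,\mathcal{T}_{i}[w^{i+1}].
\]
We take the inner product with $\widehat{w^{i}}$ and use the spectral gap $Q(\widehat{w^{i}})\ge\|\widehat{w^{i}}\|^2_{L^2}$ together with $k-i\ge1$. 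This gives
\[
\tfrac12\tfrac{d}{d\tau}\|\widehat{w^{i}}\|^2_{L^2}+2\|\widehat{w^{i}}\|^2_{L^2}\le C\varepsilon e^{-\tau/2}\|\widehat{w^{i}}\|_{L^2}+\alpha\|P\mathcal{T}_{i}[w^{i+1}]\|_{L^2}\|\widehat{w^{i}}\|_{L^2}.
\]
For the drift term we bound $\|w^{i}_{\eta}-\tfrac{\eta}{4}w^{i}\|_{L^2}\le C(Q(w^{i})+\|w^{i}\|^2_{L^2})^{1/2}\le C$, using Lemmas \ref{boundedl2normlemma} and \ref{lemma:Qwi-bounded}. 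Alternatively, we absorb it into the $Q$ term as in the proof of Lemma \ref{boundedl2normlemma}, with $\varepsilon<1/2$.

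\emph{Key step: the coupling term.} We decompose
\[
P\mathcal{T}_{i}[w^{i+1}]=P w^{i+1}+P\big(\mathcal{T}_{i}[w^{i+1}]-w^{i+1}\big)=\widehat{w^{i+1}}+P\big(\mathcal{T}_{i}[w^{i+1}]-w^{i+1}\big).
\]
The first piece is bounded by $C(\tau+1)e^{-\tau/2}$ by the inductive hypothesis. For the second we need an $L^2$ (not merely $e_0$-tested) version of the $T_1,T_2$ estimate from the proof of Lemma \ref{qiclosetoqk}:
\[
\|w^{i+1}(\cdot+\delta)-w^{i+1}\|_{L^2}\le\delta\|w^{i+1}_\eta\|_{L^2}\le C\delta,
\]
by Corollary \ref{boundedderivative}. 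Also $|e^{-A}e^{-B\eta}-1|\le C\delta(1+\eta)$, so the weighted piece requires control of $\|\eta w^{i+1}\|_{L^2}$, which follows from $Q(w^{i+1})\le C$ since $Q$ dominates $\int\eta^2 w^2/16$ up to $\tfrac34\|w\|^2$. Hence this piece is at most $C\delta(\tau)\le C(\tau+\ln t_0)t_0^{-1/2}e^{-\tau/2}$. I expect this step to be the main obstacle: the forcing is only $O(\delta)$ in $L^2$ because of the unbounded weight $\eta$, and the constant depends on $t_0$, i.e. on $\varepsilon$. This matches the stated $C=C(\alpha,\varepsilon)$.

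\emph{Conclusion.} With $y=\|\widehat{w^{i}}\|_{L^2}$, where positive, we obtain
\[
y'+2y\le C(\tau+1)e^{-\tau/2}.
\]
Gronwall then gives
\[
y(\tau)\le e^{-2\tau}y(0)+C\int_0^\tau e^{-2(\tau-s)}(s+1)e^{-s/2}\,ds\le C(\tau+1)e^{-\tau/2},
\]
since the decay rate $2>1/2$. Note that the factor $(\tau+1)$ enters from $\delta(\tau)$ (and propagates through the induction) rather than from resonance. If the projected drift term only yields $k-i-\tfrac12$ in place of $2$, the same conclusion still holds, since that rate still exceeds $1/2$. This closes the induction for all $i\in\{1,\ldots,k\}$.
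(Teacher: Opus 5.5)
Your proposal is correct and follows essentially the same route as the paper: downward induction from the known case $i=k$, an $L^2$ energy estimate for $\widehat{w^{i}}$ using the spectral gap, and the same splitting of the coupling term into $\widehat{w^{i+1}}$ (controlled by the inductive hypothesis) plus an $O(\delta(\tau))$ shift/weight error, closed by a linear Gronwall inequality whose decay rate exceeds $1/2$. Your small variations are valid: you bound the drift term in one piece via $\|w^{i}_{\eta}-\tfrac{\eta}{4}w^{i}\|_{L^2}^2=Q(w^{i})+\|w^{i}\|_{L^2}^2\le C$ instead of absorbing $\int \tfrac{\eta}{4}(\widehat{w^{i}})^2$ into $Q$, and you use $|e^{-A}e^{-B\eta}-1|\le A+B\eta$ with $\|\eta w^{i+1}\|_{L^2}^2\le 16\,Q(w^{i+1})+12\|w^{i+1}\|_{L^2}^2$, which makes explicit a moment bound the paper's proof leaves implicit.
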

\begin{proof}

We know from \cite{short_proof} that
$\|\widehat{w^{k}}(\tau)\|_{L^{2}} \leq C e^{-\tau / 2}$, so it holds for $i=k$.
\newline 
We assume that $\| \widehat{w^{i+1}}(\tau) \|_{L^2} \leq C (\tau+1)e^{-\frac{\tau}{2}}$ and show that then it holds for $i$, i.e.
$$\| \widehat{w^{i}}(\tau) \|_{L^2} \leq C (\tau+1) e^{-\frac{\tau}{2}}$$ 

for some constant $C$ that depends on $\alpha, \varepsilon$ and the initial conditions.

We use the equations \eqref{cascadingMoperator},\eqref{equationforqi} for $w^{i}, q^{i}(\tau)$, and the fact that $M \widehat{w^{i}}= M w^{i} $ as well as  

$w_{\eta}^{i}-\frac{\eta}{4}w^{i}=\widehat{w^{i}}_{\eta} -\frac{\eta}{4} \widehat{w^{i}} + q^{i}(\tau)\left( (e_0)_{\eta}
-\frac{\eta}{4} e_0 \right) $
and derive an equation for $\widehat{w^{i}}=w^{i}-q^{i}(\tau)e_0$. 

\begin{equation}
\begin{split}
&\widehat{w^{i}}_{\tau}+ M \widehat{w^{i}}= -(k-i)\widehat{w^{i}}- \left(\frac{3}{2}+i-k \right) \varepsilon  e^{-\tau / 2}\left(\left\langle\left(e_{0}\right)_{\eta}+\frac{\eta}{4} e_{0}, w^{i}\right\rangle e_{0}+q^{i}(\tau)\left(\left(e_{0}\right)_{\eta}-\frac{\eta}{4} e_{0}\right)\right)\\
& - \left(\frac{3}{2}+i-k \right)\varepsilon e^{-\tau / 2} \left( \widehat{w^{i}}_{\eta}-\frac{\eta}{4} \widehat{w^{i}}\right)+\alpha \mathcal{T}_{i}[w^{i+1}]-\alpha \left\langle e_0,\mathcal{T}_{i}[w^{i+1}] \right\rangle e_0 \\
\end{split}
\end{equation}

After multiplying with $\widehat{w^{i}}$ and integrating, using that $e_0$ and $\widehat{w}^{i}$ are orthogonal we get

\begin{equation} \label{eqtobound}
\begin{split}
&\frac{1}{2}\frac{d}{d \tau}\|\widehat{w^{i}}\|_{L^{2}}^{2}+ Q(\widehat{w^{i}})=-(k-i)\|\widehat{w^{i}}\|_{L^{2}}^{2} -\left(\frac{3}{2}+i-k \right)\varepsilon e^{-\frac{\tau}{2}} q^{i}(\tau) \left \langle (e_0)_{\eta} -\frac{\eta}{4} e_0, \widehat{w^{i}} \right \rangle \\
& + \left(\frac{3}{2}+i-k \right)\varepsilon e^{-\frac{\tau}{2}} \int_{0}^{+\infty } \frac{\eta}{4} {\widehat{w^{i}}}^{2} \mathrm{d} \eta +\alpha \left \langle \mathcal{T}_{i}[w^{i+1}], \widehat{w^{i}} \right \rangle \\
\end{split}
\end{equation}

Now let us see how we will bound each of the terms in equation $\ref{eqtobound}$.

\begin{equation}
\begin{split}
& -\left(\frac{3}{2}+i-k \right)\varepsilon e^{-\frac{\tau}{2}} q^{i}(\tau) \left \langle (e_0)_{\eta} -\frac{\eta}{4} e_0, \widehat{w^{i}} \right \rangle \leq  C \varepsilon e^{-\frac{\tau}{2}} \| \widehat{w^{i}}\|_{L^2}  \\
\end{split}
\end{equation}

\begin{equation}
\begin{split}
& \left(\frac{3}{2}+i-k \right)\varepsilon e^{-\frac{\tau}{2}} \int_{0}^{+\infty } \frac{\eta}{4} {\widehat{w^{i}}}^{2} \mathrm{d} \eta  \leq |\left(\frac{3}{2}+i-k \right)|\varepsilon e^{-\frac{\tau}{2}}\left( Q(\widehat{w^{i}}) +\|\widehat{w^{i}}\|^2_{L^2} \right) \leq C \varepsilon e^{-\frac{\tau}{2}}  Q(\widehat{w^{i}})
\end{split}
\end{equation}

\normalsize

We split the coupling term
$
\Big\langle
\mathcal{T}_{i}[w^{i+1}],
\widehat{w^{i}}
\Big\rangle
=
\Big\langle
\mathcal{T}_{i}[w^{i+1}]-w^{i+1},
\widehat{w^{i}}
\Big\rangle
+
\Big\langle
w^{i+1},
\widehat{w^{i}}
\Big\rangle.
$ The second piece is
$
\Big\langle
w^{i+1},
\widehat{w^{i}}
\Big\rangle
=
\Big\langle
q^{i+1}(\tau)e_0+\widehat{w^{i+1}},
\widehat{w^{i}}
\Big\rangle
=
\Big\langle
\widehat{w^{i+1}},
\widehat{w^{i}}
\Big\rangle,
$
since $\langle e_0,\widehat{w^{i}}\rangle=0$. By Cauchy-Schwarz and the inductive hypothesis, 
\[
\Big|
\langle
\widehat{w^{i+1}},
\widehat{w^{i}}
\rangle
\Big|
\le
\|\widehat{w^{i+1}}\|_{L^{2}}
\|\widehat{w^{i}}\|_{L^{2}}
\le
C\,(\tau+1)e^{-\tau/2}\|\widehat{w^{i}}\|_{L^{2}}
\]

For the difference term
$
\Big\langle
\mathcal{T}_{i}[w^{i+1}]-w^{i+1},\,\widehat{w^{i}}
\Big\rangle
$ we write 
$$
\mathcal{T}_{i}[w^{i+1}](\tau,\eta)-w^{i+1}(\tau,\eta)
=
\big(w^{i+1}(\tau,\eta+\delta)-w^{i+1}(\tau,\eta)\big)
+
w^{i+1}(\tau,\eta+\delta)\big(e^{-A_\tau}e^{-B_\tau\eta}-1\big)$$ 

and control each piece in $L^{2}$ exactly as in the proof of Lemma~\ref{qiclosetoqk}. For the shift part,
\[
w^{i+1}(\tau,\eta+\delta)-w^{i+1}(\tau,\eta)
=
\int_{0}^{\delta} \partial_{\eta} w^{i+1}(\tau,\eta+\theta)\,d\theta,
\]
so
\[
\|w^{i+1}(\cdot+\delta)-w^{i+1}(\cdot)\|_{L^{2}}
\le
\delta\,\|w_{\eta}^{i+1}(\tau)\|_{L^{2}}
\le
C\,\delta,
\]
since $\|w^{i+1}_{\eta}\|_{L^{2}}$ is uniformly bounded in $\tau$.

For the weight error we have 
$
e^{-A_\tau}e^{-B_\tau \eta}-1
= -A_\tau - B_\tau \eta + R_\tau(\eta),
$
where the remainder satisfies 
$
|R_\tau(\eta)| \le C\,\delta(\tau)^2 (1+\eta^2).
$ So we get 

\[
\|w^{i+1}(\tau, \eta+\delta)(e^{-A_\tau} e^{-B_\tau \eta \cdot}-1)\|_{L^{2}}
\le
C\,\delta.
\]

Combining these two pieces,
\[
\|\mathcal{T}_{i}[w^{i+1}]-w^{i+1}\|_{L^{2}}
\le
C\,\delta(\tau)
\le
C \varepsilon_{1} \,(\tau+1)e^{-\tau/2},
\]

Therefore,
\[
\Big|
\langle
\mathcal{T}_{i}[w^{i+1}]-w^{i+1},
\widehat{w^{i}}
\rangle
\Big|
\le
\|\mathcal{T}_{i}[w^{i+1}]-w^{i+1}\|_{L^{2}}
\|\widehat{w^{i}}\|_{L^{2}}
\le
C\varepsilon_{1}\,(\tau+1)e^{-\tau/2}\|\widehat{w^{i}}\|_{L^{2}}.
\]

Putting both parts together,
\[
\alpha \Big|
\langle
\mathcal{T}_{i}[w^{i+1}],
\widehat{w^{i}}
\rangle
\Big|
\le
C \alpha (\tau+1) e^{-\tau/2}||\widehat{w^{i}}||_{L^2}
\]

We can now conclude this energy estimate.
Using those bounds we obtain
\[
\frac{d}{d\tau}\|\widehat{w^{i}}\|_{L^{2}}^{2}
+ \big(2 - C\varepsilon \big)\,Q(\widehat{w^{i}})
\le
C_{\alpha} e^{-\tau/2}(\tau+1)||\widehat{w^{i}}||_{L^2}
\]
For $t_0$ large we have $\varepsilon$ small, so we may absorb constants and get after using the spectral gap inequality $\|\widehat{w^{i}}\|_{L^{2}}^{2}\le Q(\widehat{w^{i}})$,
\[
\frac{d}{d\tau}\|\widehat{w^{i}}\|_{L^{2}}^{2}
+ \frac{3}{2}\,\|\widehat{w^{i}}\|_{L^{2}}^{2}
\le
C_{\alpha} e^{-\tau/2}(\tau+1) ||\widehat{w^{i}}||_{L^2}
\]
Set $\phi(\tau)=\|\widehat{w^{i}}(\tau)\|_{L^{2}}$. Then
\[
2 \phi(\tau)\phi'(\tau)+\frac{3}{2}\,\phi^2(\tau)
\le
C_{\alpha} e^{-\tau/2}(\tau+1)\phi(\tau).
\]
and 
\[
\phi'(\tau)+\frac{3}{4}\,\phi(\tau)
\le
C_{\alpha} e^{-\tau/2}(\tau+1).
\]

and thus  
\[
\phi(\tau)
\le
C'(\alpha,\varepsilon)\,(\tau+1) e^{-\tau/2},
\]

We have thereby shown that there exists a constant $C=C(\alpha,\varepsilon)>0$ that depends on $\alpha, \varepsilon$ and the initial conditions such that $\|\widehat{w^{i}}\|_{L^2} \leq C (\tau +1)e^{-\frac{\tau}{2}}$.
\end{proof}

\begin{lemma} \label{boundedlinfnormforderivative}
For every \(i=1,\dots,k\) and every compact set $K$, there exists
\(C_K>0\) such that
\[
\|(\widehat{w^i})_\eta(\tau)\|_{L^\infty(K)}
\le C_K(\tau+1)e^{-\tau/2}.
\]
\end{lemma}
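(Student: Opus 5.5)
We bootstrap the $L^2$ decay of Lemma \ref{boundedorthogonalremainderlemma} into local $H^2$ (hence $L^\infty$ of the derivative) decay by parabolic regularity on unit time intervals, again arguing by induction on $i$ downward from $i=k$. For $i=k$ this is Lemma 2.2 of \cite{short_proof}; equivalently the argument below applies with no coupling term. Fix $i<k$ and assume the bound holds for $i+1$; by Lemmas \ref{boundedl2normlemma}, \ref{lemma:Qwi-bounded}, Corollary \ref{boundedderivative} and Lemma \ref{boundedorthogonalremainderlemma} we also control $\|w^{i+1}\|_{H^1}$ uniformly and $\|\widehat{w^{i+1}}\|_{L^2}\le C(\tau+1)e^{-\tau/2}$.

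The first step is to write the equation for $\widehat{w^i}$ derived in the proof of Lemma \ref{boundedorthogonalremainderlemma} as
\[
\widehat{w^i}_\tau - \widehat{w^i}_{\eta\eta} = -\Big(\tfrac{\eta^2}{16}-\tfrac34+k-i\Big)\widehat{w^i} + F^i(\tau,\eta),
\]
where $F^i$ collects the drift terms of size $\varepsilon e^{-\tau/2}$ (involving $\widehat{w^i}_\eta$, $\eta\widehat{w^i}$, $q^i$ and $e_0$-terms) and the coupling remainder $\alpha(\mathcal{T}_i[w^{i+1}]-\langle e_0,\mathcal{T}_i[w^{i+1}]\rangle e_0)$. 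Splitting the coupling remainder as in Lemma \ref{boundedorthogonalremainderlemma} into $\alpha\widehat{w^{i+1}}$ plus $\alpha(\mathcal{T}_i[w^{i+1}]-w^{i+1})$ minus its $e_0$-projection, and using $\delta(\tau)\le C(\tau+1)e^{-\tau/2}$, we get
\[
\|F^i(\tau)\|_{L^2(K')}\le C_{K'}(\tau+1)e^{-\tau/2}
\]
on any compact $K'\subset[0,\infty)$, once the $\widehat{w^i}_\eta$ contribution is handled.

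The second step is a localized energy estimate. Take a cutoff $\chi$ supported in $K'\supset K$, with $\chi\equiv1$ on $K$, and a time cutoff on $[\tau-1,\tau]$. Multiplying the equation by $-\chi^2\widehat{w^i}_{\eta\eta}$ (the Dirichlet condition $\widehat{w^i}(\tau,0)=0$ kills the boundary terms at $\eta=0$) gives local bounds on $\sup_s\|\widehat{w^i}_\eta\|_{L^2(K)}$ and $\int\|\widehat{w^i}_{\eta\eta}\|_{L^2(K)}^2$ in terms of $\|\widehat{w^i}\|_{L^2}$ and $\|F^i\|_{L^2(K')}$ on $[\tau-1,\tau]$. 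All of these are $O((\tau+1)e^{-\tau/2})$. Differentiating the equation in $\eta$ and repeating yields an $L^2(K)$ bound on $\widehat{w^i}_{\eta\eta}$ at fixed time. The one-dimensional Sobolev embedding $\|g\|_{L^\infty(K)}\le C(\|g\|_{L^2(K')}+\|g_\eta\|_{L^2(K')})$ applied to $g=\widehat{w^i}_\eta$ then gives the claim. Alternatively, and perhaps more cleanly, one can apply standard interior/boundary $W^{2,1}_p$ parabolic estimates on $[\tau-1,\tau]\times K'$ for the heat operator with bounded potential, followed by Sobolev embedding.

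I expect the main obstacle to be the regularity of the forcing. Differentiating the coupling term requires control of $w^{i+1}_{\eta\eta}$ locally, or at least of $(\mathcal T_i[w^{i+1}]-w^{i+1})_\eta$ of size $\delta$, which is where the induction hypothesis is used. The plan is to avoid differentiating the forcing: use the $W^{2,1}_p$ estimate with $p=2$, which only needs $F^i\in L^2$, and get $\widehat{w^i}_{\eta\eta}\in L^2_{loc}$ in space-time. Then obtain the pointwise-in-time bound by a second localized energy estimate for $\widehat{w^i}_\eta$. For this it suffices to know that $\|\partial_\eta(\mathcal T_i[w^{i+1}]-w^{i+1})\|_{L^2(K')}\le C\delta$, which follows from local $H^2$ bounds on $w^{i+1}$ obtained by the same argument at level $i+1$ — hence the induction. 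The $\varepsilon e^{-\tau/2}\widehat{w^i}_\eta$ term is absorbed for $t_0$ large, and the $q^i$ terms are harmless since $q^i$ is bounded by Lemma \ref{qiclosetoqk}.
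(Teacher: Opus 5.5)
Your $L^2$ bound on $F^i$ is fine, and so is your first localized estimate: with the multiplier $-\chi^2\widehat{w^i}_{\eta\eta}$ the boundary term does vanish, because $\widehat{w^i}_\tau(\tau,0)=0$. The gap is in the step you rely on for a pointwise-in-time bound, namely the second localized energy estimate for $\widehat{w^i}_\eta$.

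Unlike $\widehat{w^i}$, the derivative $\widehat{w^i}_\eta$ has no homogeneous boundary condition at $\eta=0$. Evaluating the equation there gives $\widehat{w^i}_{\eta\eta}(\tau,0)=a_i\varepsilon e^{-\tau/2}\widehat{w^i}_\eta(\tau,0)-G^i(\tau,0)$. Here $a_i=\tfrac32+i-k$, and $G^i$ is the part of your $F^i$ not involving $\widehat{w^i}$ (the paper's $F^i$). Its boundary value $G^i(\tau,0)=\alpha w^{i+1}(\tau,\delta(\tau))e^{-A(\tau)}-a_i\varepsilon e^{-\tau/2}q^i(\tau)(e_0)_\eta(0)$ does not vanish.

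Consequently, estimating $\sup_s\|\chi\widehat{w^i}_{\eta\eta}(s)\|_{L^2}$ produces the boundary term $\chi(0)^2\,\widehat{w^i}_{\eta\eta}(\tau,0)\,\widehat{w^i}_{\eta\tau}(\tau,0)$. This term is not controlled by $\|\partial_\eta(\mathcal{T}_i[w^{i+1}]-w^{i+1})\|_{L^2(K')}\le C\delta$. Removing it requires time regularity of the forcing at the boundary, e.g.\ of $\partial_\tau[w^{i+1}(\tau,\delta(\tau))]$, which your strengthened induction (local $H^2$ in $\eta$) does not supply.

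You cannot fall back on interior estimates, because the lemma is needed on compacts containing $0$; that is exactly how $|\widehat{w^i}|\le C_K\eta(\tau+1)e^{-\tau/2}$ is obtained afterwards. The $W^{2,1}_p$ route does not rescue this either: it yields a pointwise spatial-gradient bound in one space dimension only for $p>3$, and you control the forcing only in $L^2$.

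The idea you are missing is how the paper uses the induction hypothesis: it bounds the forcing pointwise rather than differentiating it. Since $\widehat{w^{i+1}}(\tau,0)=0$, the hypothesis $\|(\widehat{w^{i+1}})_\eta\|_{L^\infty(K')}\le C_{K'}(\tau+1)e^{-\tau/2}$ gives $|\widehat{w^{i+1}}(\tau,\eta)|\le C_{K'}\eta(\tau+1)e^{-\tau/2}$ on $K'$. Writing $w^{i+1}=q^{i+1}e_0+\widehat{w^{i+1}}$ and Taylor-expanding $\mathcal{T}_i[e_0]-\langle e_0,\mathcal{T}_i[e_0]\rangle e_0=O(\delta)$ on $K$, one gets $\|G^i(\tau)\|_{L^\infty(K)}\le C_K(\tau+1)e^{-\tau/2}$.

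With $L^\infty$ forcing and the $L^2$ decay from Lemma~\ref{boundedorthogonalremainderlemma}, local parabolic $L^p$ estimates up to the Dirichlet boundary with $p>3$, plus Sobolev embedding, give the claim directly. The induction hypothesis is then just the lemma itself at level $i+1$, with no $H^2$ strengthening.

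Alternatively, in one dimension your own ingredients already close the argument without the derivative bound at level $i+1$. After localizing, use Duhamel's formula with the half-line Dirichlet heat kernel and the smoothing bound $\|\partial_\eta e^{s\partial_\eta^2}\|_{L^2\to L^\infty}\lesssim s^{-3/4}$, which is integrable at $s=0$. This converts your $L^\infty_\tau L^2_\eta$ bounds on $F^i$, and on the cutoff commutators from your first local estimate, into the stated $L^\infty(K)$ bound on $\widehat{w^i}_\eta$.
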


\begin{proof}
We argue again by downward induction on \(i\). For \(i=k\), in \cite{short_proof} it is shown that for every compact set $K$,
\[
\|(\widehat{w^k})_\eta(\tau)\|_{L^\infty(K)}
\le C_K e^{-\tau/2}.
\]

Now assume that the claim holds for \(i+1\) and we prove it for \(i\). We already know from
the previous energy estimate that
\[
\|\widehat{w^i}(\tau)\|_{L^2}
\le C(\tau+1)e^{-\tau/2}.
\]
We choose a slightly larger compact interval $K'$ and control the right-hand side of the equation for
\(\widehat{w^i}\) on \(K'\).

\begin{equation}
\begin{split}
&\widehat{w^{i}}_{\tau}+ (M+(k-i)) \widehat{w^{i}} +\left(\frac{3}{2}+i-k \right)\varepsilon e^{-\tau / 2} \left( \widehat{w^{i}}_{\eta}-\frac{\eta}{4} \widehat{w^{i}}\right) \\
&=-\left(\frac{3}{2}+i-k \right) \varepsilon  e^{-\tau / 2}\left(\left\langle\left(e_{0}\right)_{\eta}+\frac{\eta}{4} e_{0}, w^{i}\right\rangle e_{0}+q^{i}(\tau)\left(\left(e_{0}\right)_{\eta}-\frac{\eta}{4} e_{0}\right)\right)+\alpha \mathcal{T}_{i}[w^{i+1}]-\alpha \left\langle e_0, \mathcal{T}_{i}[w^{i+1}] \right\rangle e_0 \\
\end{split}
\end{equation}

Our right-hand side is 
\[
\begin{split}
F^i(\tau,\eta)
&=
-a_i\varepsilon e^{-\tau/2}
\left(
\left\langle (e_0)_\eta+\frac{\eta}{4}e_0,w^i\right\rangle e_0
+
q^i(\tau)\left((e_0)_\eta-\frac{\eta}{4}e_0\right)
\right)
\\
&\quad
+\alpha\left(
\mathcal T_i[w^{i+1}]
-
\left\langle e_0,\mathcal T_i[w^{i+1}]\right\rangle e_0
\right),
\end{split}
\]
where
\[
a_i=\frac32+i-k.
\]
We show that, for every compact set,
$
\|F^i(\tau)\|_{L^\infty(K)}
\le C_K(\tau+1)e^{-\tau/2}.
$

First, since \(q^i(\tau)\) is uniformly bounded we have
\[
\left\|
\varepsilon e^{-\tau/2}
q^i(\tau)\left((e_0)_\eta-\frac{\eta}{4}e_0\right)
\right\|_{L^\infty(K)}
\le C_K e^{-\tau/2}.
\]
Moreover,
\[
w^i=q^i e_0+\widehat w^i,
\]
with \(q^i\) uniformly bounded and
\[
\|\widehat w^i(\tau)\|_{L^2}
\le C(\tau+1)e^{-\tau/2}.
\]
Therefore
\[
\left|
\left\langle (e_0)_\eta+\frac{\eta}{4}e_0,w^i\right\rangle
\right|
\le C.
\]
Hence
\[
\left\|
\varepsilon e^{-\tau/2}
\left\langle (e_0)_\eta+\frac{\eta}{4}e_0,w^i\right\rangle e_0
\right\|_{L^\infty(K)}
\le C_K e^{-\tau/2}.
\]

It remains to estimate the projected coupling term. Using
\[
w^{i+1}=q^{i+1}e_0+\widehat w^{i+1}
\]
and the linearity of \(\mathcal T_i\), we write

$$
\mathcal T_i[w^{i+1}]
-
\left\langle e_0,\mathcal T_i[w^{i+1}]\right\rangle e_0
\\
=
q^{i+1}(\tau)
\left(
\mathcal T_i[e_0]
-
\left\langle e_0,\mathcal T_i[e_0]\right\rangle e_0
\right)
+
\left(
\mathcal T_i[\widehat w^{i+1}]
-
\left\langle e_0,\mathcal T_i[\widehat w^{i+1}]\right\rangle e_0
\right).
$$

For the first term, the Taylor expansion gives, uniformly for \(\eta\in K\),
\[
\left|\mathcal T_i[e_0](\tau,\eta)-e_0(\eta)\right|
\le C_K\delta(\tau).
\]
Also,
\[
\left|
\left\langle e_0,\mathcal T_i[e_0]\right\rangle-1
\right|
\le C\delta(\tau).
\]
Therefore
\[
\left\|
\mathcal T_i[e_0]
-
\left\langle e_0,\mathcal T_i[e_0]\right\rangle e_0
\right\|_{L^\infty(K)}
\le C_K\delta(\tau).
\]
Since \(q^{i+1}\) is uniformly bounded and
$
\delta(\tau)\le C(\tau+1)e^{-\tau/2},
$
we obtain
\[
\left\|
q^{i+1}(\tau)
\left(
\mathcal T_i[e_0]
-
\left\langle e_0,\mathcal T_i[e_0]\right\rangle e_0
\right)
\right\|_{L^\infty(K)}
\le C_K(\tau+1)e^{-\tau/2}.
\]

For the second term, choosing $K'$ so that
\(\eta+\delta(\tau)\in K'\) for all \(\eta\in K\) and $\tau$, we have by the induction hypothesis,
\[
\|(\widehat w^{i+1})_\eta(\tau)\|_{L^\infty(K')}
\le C_{K'}(\tau+1)e^{-\tau/2}.
\]
so that the pointwise bound for \(\widehat w^{i+1}\) is
$
|\widehat w^{i+1}(\tau,\eta)|
\le C(\eta)(\tau+1)e^{-\tau/2}.
$
Since \(\eta\in K'\), the factor \(C(\eta)\) is bounded on \(K'\). Hence
\[
\|\widehat w^{i+1}(\tau)\|_{L^\infty(K')}
\le C_{K'}(\tau+1)e^{-\tau/2}.
\]
Therefore, we obtain  on \(K\),
\[
\|\mathcal T_i[\widehat w^{i+1}](\tau)\|_{L^\infty(K)}
\le C_K(\tau+1)e^{-\tau/2}.
\]

Furthermore, we have
\[
\left|
\left\langle e_0,\mathcal T_i[\widehat w^{i+1}]\right\rangle
\right|
\le
\|e_0\|_{L^2}\,
\|\mathcal T_i[\widehat w^{i+1}]\|_{L^2}
\le
C\|\widehat w^{i+1}(\tau)\|_{L^2} \leq  C(\tau+1)e^{-\tau/2},
\]

which implies
$
\left\|
\left\langle e_0,\mathcal T_i[\widehat w^{i+1}]\right\rangle e_0
\right\|_{L^\infty(K)}
\le C_K(\tau+1)e^{-\tau/2}.
$
Consequently,
\[
\left\|
\mathcal T_i[\widehat w^{i+1}]
-
\left\langle e_0,\mathcal T_i[\widehat w^{i+1}]\right\rangle e_0
\right\|_{L^\infty(K)}
\le C_K(\tau+1)e^{-\tau/2}.
\]

Therefore, we obtain
\[
\|F^i(\tau)\|_{L^\infty(K)}
\le C_K(\tau+1)e^{-\tau/2}.
\]

Combined with Lemma \ref{boundedorthogonalremainderlemma}, we obtain by parabolic regularity

$$
\left\|(\widehat{w^{i}})_{\eta}(\tau)\right\|_{L^{\infty}(K)} \leq C_{K} (\tau+1) e^{-\frac{\tau}{2}}
$$

In particular, we get 
$$|\widehat{w^{i}}(\tau, \eta)| \leq C_{K} \eta (\tau+1) e^{-\frac{\tau}{2}}$$
\newline 
since $\widehat{w^{i}}(\tau,0)=w^{i}(\tau,0)-q^{i}(\tau)e_0(0)=0$.

\end{proof}

\medskip
We can now combine those Lemmas to conclude the proof of Proposition \ref{asymptoticslinearlemma}.

\begin{proof}
We use the following facts that we showed in the previous Lemmas
\begin{enumerate}
\item{ $w^{i}(\tau,\eta)=q^{i}(\tau)e_0(\eta)+\widehat{w^{i}}(\tau,\eta)$}
\item{
$q^{i}(\tau)=\frac{\alpha^{k-i}}{(k-i)!} q^{k}(0)+h^{i}(\tau)$ with 
$|h^{i}(\tau)| \leq C\varepsilon + C (\tau+1)e^{-\frac{\tau}{2}}   $
}
\item{
$|\widehat{w^{i}}(\tau,\eta)| \leq C_{K}\eta (\tau+1)e^{-\frac{\tau}{2}}$ for $\eta$ in any compact set. 
}
\item{
$q^{i}(0)=\left \langle w^{i}(0,\eta), e_0(\eta) \right \rangle= \frac{1}{(2\sqrt{\pi})^{\frac{1}{2}}} \int_{0}^{+\infty} \eta p^{i}(0,\eta) \mathrm{d} \eta$

}

\end{enumerate}
We get

\begin{equation} \label{equationforpi}
\begin{split}
&p^{i}(\tau,\eta)=w^{i}(\tau,\eta) e^{-\frac{\eta^2}{8}}e^{\frac{\tau}{2}}= \left(q^{i}(\tau)e_0(\eta)+\widehat{w^{i}}(\tau,\eta)\right) e^{-\frac{\eta^2}{8}}e^{\frac{\tau}{2}} \\
&=\left(\frac{\alpha^{k-i}}{(k-i)!(2\sqrt{\pi})^{\frac{1}{2}}}\int_{0}^{+\infty} \eta p^{k}(0,\eta) \mathrm{d} \eta +h^{i}(\tau)\right)e_0(\eta)e^{-\frac{\eta^2}{8}}e^{\frac{\tau}{2}}+\widehat{w^{i}}(\tau,\eta)e^{-\frac{\eta^2}{8}}e^{\frac{\tau}{2}} \\
&=\left(\frac{\alpha^{k-i}}{(k-i)!(2\sqrt{\pi})^{\frac{1}{2}}}\int_{0}^{+\infty} \eta p^{k}(0,\eta) \mathrm{d} \eta +h^{i}(\tau)\right) \frac{1}{(2\sqrt{\pi})^{\frac{1}{2}}}\eta e^{-\frac{\eta^2}{8}}e^{-\frac{\eta^2}{8}}e^{\frac{\tau}{2}}+\widehat{w^{i}}(\tau,\eta)e^{-\frac{\eta^2}{8}}e^{\frac{\tau}{2}} \\
&=e^{\frac{\tau}{2}} \eta \left( \frac{ \alpha^{k-i} e^{-\frac{\eta^2}{4}}}{(k-i)!2\sqrt{\pi}} \int_{0}^{+\infty} \eta p^{k}(0,\eta) \mathrm{d} \eta + \frac{e^{-\frac{\eta^2}{4}}}{(2 \sqrt{\pi})^{\frac{1}{2}}} h^{i}(\tau) +e^{-\frac{\eta^2}{8}}\frac{\widehat{w^{i}}(\tau,\eta)}{\eta} \right) \\
&=e^{\frac{\tau}{2}}\eta \left( \frac{ \alpha^{k-i} e^{-\frac{\eta^2}{4}}}{(k-i)!2\sqrt{\pi}} \int_{0}^{+\infty} \eta p^{k}(0,\eta) \mathrm{d} \eta + O(\varepsilon) +O\left((\tau+1)e^{-\frac{\tau}{2}}\right) \right) \\
\end{split}
\end{equation}

where $ O(\varepsilon), O\left((\tau+1) e^{-\frac{\tau}{2}}\right)$
denote functions of $\tau$ and $\eta$ which are of that order for $\tau>0$, and for $\eta$ in any fixed compact set. In the original variables, we get that there exists a constant $C$ that depends on the initial conditions and on $\alpha$ and on $t_0$ such that

\begin{equation}
\begin{split}
&V^{i}(t,x)=e^{-\lambda_{*}x}z^{i}(t,x)=e^{- \lambda_{*}x}p^{i}\left(\ln(t+t_0)-\ln(t_0), \frac{x}{\sqrt{t+t_0}}\right) \\
&=\frac{\sqrt{t+t_0}}{\sqrt{t_0}}\frac{x e^{-\lambda_{*}x}}{\sqrt{t+t_0}} \left( C e^{-\frac{x^2}{4(t+t_0)}} + h^{i}(t,x) \right)\\
&=\frac{x e^{-\lambda_{*}x}}{\sqrt{t_0}}\left( C e^{-\frac{x^2}{4(t+t_0)}} + h^{i}(t,x) \right) \\
\end{split}
\end{equation}

where for any $\sigma>0$, 

$$
\limsup _{t \rightarrow+\infty} \sup _{0 \leq x \leq \sigma \sqrt{t+1}}|h^{i}(t, x)|<\frac{C}{2}
$$

since the asymptotics above are valid for all $\tau>0$ and for $\eta=\frac{x}{\sqrt{t+t_0}}$ in any compact set. 

\end{proof}

\subsection{A lower bound for the location of the front of $u$}
Now we proceed by constructing a subsolution for the system \eqref{mysystem}, which will let us prove a lower bound on the location of the front of $u$. Combined with the upper bound we derived above, this will let us establish Theorem $\ref{levelsets}$ on the front location of the $u$ component in system \eqref{mysystem}.

We perform a similar analysis as before, setting $u(t,x)=\widetilde{u}(t, x-\xi_{u}(t)), v(t,x)=\widetilde{v}(t,x-\xi_{v}(t))$ in the frames

\begin{equation}
\xi_{u}(t)=c_{*}t+\frac{1}{\lambda_{*}}\ln \left(t+t_{0}\right), \quad \xi_{v}(t)=c_{*}t
\end{equation}

where $t_{0}>0$ will be chosen big enough later. The equations are

\begin{equation}
\begin{split}
\widetilde{u}_{t}-c_{*}\widetilde{u}_{x} -\frac{1}{\lambda_{*}(t+t_{0})}\widetilde{u}_{x} & =\widetilde{u}_{xx}+f(\widetilde{u})+\alpha \widetilde{v}(t,x+\frac{1}{\lambda_{*}}\ln (t+t_0))(1-\widetilde{u}), \\
\widetilde{v}_{t}-c_{*}\widetilde{v}_{x} & =\widetilde{v}_{xx}+f(\widetilde{v}) .
\end{split}
\end{equation}

Now we consider the linearized Dirichlet problem with zero boundary conditions imposed on these frames

\begin{equation} \label{linearizedonnewframes}
\begin{split}
&U_{t}-c_{*}U_{x} -\frac{1}{\lambda_{*}(t+t_{0})}U_{x}=U_{xx}+f'(0)U+\alpha V(t,x+\frac{1}{\lambda_{*}}\ln (t+t_0)), x>0 \\
&V_{t}-c_{*}V_{x} =V_{xx}+f'(0)V, x>0 . \\
& U(t,0)=0, V(t,0)=0 
\end{split}
\end{equation}

As before, we do a change of variables to get rid of the exponential factor

\begin{equation}
\begin{split}
& U(t, x)=e^{-\lambda_{*} x} z(t, x),V(t, x)=e^{- \lambda_{*} x} w(t, x) . \\
\end{split}
\end{equation}

and get the equations for $z,w$

\begin{equation}
\begin{split}
&z_{t}-\frac{1}{\lambda_{*}(t+t_{0})} (z_{x}-\lambda_{*}z )=z_{x x}+\frac{\alpha }{(t+t_0)} w(t,x+\frac{1}{\lambda_{*}}\ln (t+t_0)), x>0 \\
&w_{t}=w_{x x}, x>0 \\
&z(t,0)=0, w(t,0)=0
\end{split}
\end{equation}

In the self-similar variables the equations for $p,q$ become

\begin{equation}
\begin{split}
& p_{\tau}+L p=-2p+\varepsilon e^{-\tau / 2} p_{\eta} +\alpha q(\tau, \eta+\delta(\tau)), \eta>0 \\
& q_{\tau}+L q =-q, \eta>0 \\
&p(\tau,0)=0, q(\tau,0)=0.
\end{split}
\end{equation}

where we set as before $\delta(\tau)=\frac{\tau+\ln(t_0)}{ \lambda_{*}\sqrt{t_0 e^{\tau}}}$, $\varepsilon=\frac{1} { \lambda_{*} t_{0}^{1 / 2}}$. With the extra change of variables $p(\tau,\eta)=\widetilde{p}(\tau,\eta) e^{-\frac{{\eta}^2}{8}} e^{-\tau}, q(\tau,\eta)=\widetilde{q}(\tau,\eta) e^{-\frac{{\eta}^2}{8}}e^{-\tau}$ to symmetrize the operator we get the equations

\begin{equation}
\begin{split}
&\widetilde{p}_{\tau}+M \widetilde{p}+\widetilde{p}=\varepsilon e^{-\frac{\tau}{ 2}}\left(\widetilde{p}_{\eta}-\frac{\eta}{4} \widetilde{p} \right) + \alpha  \widetilde{q}(\tau,\eta+\delta(\tau)) e^{-\frac{\delta(\tau)^2}{8} } e^{-\frac{\eta \delta(\tau)}{4}}\\
&\widetilde{q}_{\tau}+M \widetilde{q}=0. \\
&\widetilde{p}(\tau,0)=0, \widetilde{q}(\tau,0)=0.
\end{split}
\end{equation}

Our goal is to show that $\widetilde{p}$ is approximately equal to the principal eigenfunction of the operator $M$. This implies then that  $ p(\tau, \eta) \sim \eta e^{-\frac{\eta^2}{4}} e^{-\tau}  $.

We will use of the following Proposition. The proof is omitted because it is almost identical to the proof of Proposition \ref{asymptoticslinearlemma}.

\begin{proposition}\label{lemmanewframes}
Let $(U(t,x),V(t,x))$ be the solution to the linearized Dirichlet problem \eqref{linearizedonnewframes}

with zero boundary conditions and compactly supported initial data $U_{0}, V_{0} \geq 0$ in $(0,+\infty),$

and $U_{0} \not \equiv 0$, $V_{0} \not \equiv 0$. There exists a constant $C>0$ that depends on the initial conditions $U_{0}, V_{0}$ and a constant $t_{0}>0$ that depends on $C$ such that

$$
U(t, x)=x e^{-\lambda_{*}x}(t+t_0)^{-\frac{3}{2}}t_0\left[C e^{-\frac{x^{2}}{4\left(t+t_{0}\right)}}+h(t, x)\right]
$$

where, for each $\sigma>0$,

$$
\limsup _{t \rightarrow+\infty} \sup _{0 \leq x \leq \sigma \sqrt{t+1}}|h(t, x)|<\frac{C}{2}.
$$

\end{proposition}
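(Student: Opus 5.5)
The plan is to follow the proof of Proposition \ref{asymptoticslinearlemma} step by step, now for the two symmetrized equations for $\widetilde{q}$ and $\widetilde{p}$ written just above. The goal is to show that $\widetilde{p}(\tau,\cdot)$ converges to a positive multiple of $e_0$, up to errors $O(\varepsilon)+O((\tau+1)e^{-\tau/2})$, and then undo the changes of variables.

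\textbf{Step 1: the $\widetilde q$ component.} Since $\widetilde{q}_\tau+M\widetilde{q}=0$ holds exactly, the spectral decomposition gives
\[
\widetilde{q}(\tau)=q_0e_0+\widehat{\widetilde q}(\tau),\qquad q_0=\langle e_0,\widetilde q(0)\rangle,\qquad \|\widehat{\widetilde q}(\tau)\|_{L^2}\le e^{-\tau}\|\widehat{\widetilde q}(0)\|_{L^2},
\]
using the spectral gap. Because $V_0\ge0$, $V_0\not\equiv0$ and $e_0>0$ on $(0,\infty)$, we have $q_0>0$. Parabolic regularity then gives $L^\infty$ control of $\widehat{\widetilde q}$ and of $\widehat{\widetilde q}_\eta$ on compact sets, with rate $e^{-\tau}$. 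In particular $\|\widetilde q_\eta\|_{L^2}$ is uniformly bounded.

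\textbf{Step 2: energy bounds for $\widetilde p$.} Test the $\widetilde p$ equation against $\widetilde p$. The drift term contributes $\varepsilon e^{-\tau/2}\int\frac{\eta}{4}\widetilde p^{\,2}$, which is bounded by $\varepsilon e^{-\tau/2}(Q(\widetilde p)+\|\widetilde p\|^2)$ exactly as in Lemma \ref{boundedl2normlemma}. It is absorbed using the damping term $+\widetilde p$ and $Q\ge0$. Together with the coupling bound $\alpha\|\widetilde q\|\,\|\widetilde p\|$, this gives uniform bounds on $\|\widetilde p\|_{L^2}$. Repeating the argument of Lemma \ref{lemma:Qwi-bounded} gives uniform bounds on $Q(\widetilde p)$, and hence on $\|\widetilde p_\eta\|_{L^2}$.

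\textbf{Step 3: the principal projection.} Write $\widetilde p=a(\tau)e_0+\widehat{\widetilde p}$ with $a=\langle e_0,\widetilde p\rangle$. Projecting the equation gives
\[
a'+a=-\varepsilon e^{-\tau/2}\langle (e_0)_\eta+\tfrac{\eta}{4}e_0,\widetilde p\rangle+\alpha\langle e_0,\mathcal T[\widetilde q]\rangle ,
\]
where $\mathcal T$ denotes the shift by $\delta(\tau)$ combined with the weight $e^{-\delta^2/8}e^{-\eta\delta/4}$. The $T_1/T_2$ splitting from Lemma \ref{qiclosetoqk} gives $|\langle e_0,\mathcal T[\widetilde q]\rangle-q_0|\le C\delta(\tau)+Ce^{-\tau}$. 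Integrating against $e^{\tau}$ then yields
\[
|a(\tau)-\alpha q_0|\le C\varepsilon+C(\tau+1)e^{-\tau/2}+e^{-\tau}|a(0)|,
\]
where the $C\varepsilon$ term absorbs the $\ln t_0/\sqrt{t_0}$ factor hidden in $\delta$. For the orthogonal part, the energy argument of Lemma \ref{boundedorthogonalremainderlemma} gives $\|\widehat{\widetilde p}\|_{L^2}\le C(\tau+1)e^{-\tau/2}$; here the damping rate is actually better, since $M+1$ restricted to $e_0^\perp$ is at least $2$. The argument of Lemma \ref{boundedlinfnormforderivative} then upgrades this to $|\widehat{\widetilde p}(\tau,\eta)|\le C_K\eta(\tau+1)e^{-\tau/2}$ on compact sets.

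\textbf{Step 4: back to original variables.} Substitute $p=\widetilde p\,e^{-\eta^2/8}e^{-\tau}$, $e^{-\tau}=t_0/(t+t_0)$, $\eta=x/\sqrt{t+t_0}$ and $U=e^{-\lambda_*x}p$. This gives
\[
U=e^{-\lambda_*x}\,\frac{t_0}{t+t_0}\,\frac{x}{\sqrt{t+t_0}}\Big(\tfrac{\alpha q_0}{2\sqrt\pi}e^{-x^2/4(t+t_0)}+h\Big),
\]
which is the claimed form with $C=\alpha q_0/(2\sqrt\pi)>0$.

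\textbf{Main obstacle.} The delicate point is ensuring $\limsup|h|<C/2$. The error term $C\varepsilon$ depends on $t_0$ through $\varepsilon$ and $\delta$, while $q_0$ itself depends on $t_0$ through the self-similar rescaling of the initial data. I would handle this as in \cite{short_proof}. First fix the initial data, and note that $q_0\sqrt{t_0}$ is essentially $\int xV_0$, so it stays bounded below. Then choose $t_0$ large enough that the $O(\varepsilon)$ error stays below $C/4$. Finally, use the $\tau$-decaying terms to handle the $\limsup$ uniformly for $0\le x\le\sigma\sqrt{t+1}$, that is, for $\eta$ in a compact set.
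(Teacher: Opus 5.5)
Your Steps 1--4 follow the route the paper intends, since it omits the proof as a near copy of that of Proposition~\ref{asymptoticslinearlemma}, and they are sound. The gap is in the paragraph you single out as the main obstacle: that argument does not work. With your normalization,
\[
q_0=\langle e_0,\widetilde q(0)\rangle=\frac{1}{(2\sqrt{\pi})^{1/2}\,t_0}\int_0^{\infty}x\,e^{\lambda_* x}V_0(x)\,dx .
\]
So the quantity independent of $t_0$ is $q_0t_0$, not $q_0\sqrt{t_0}$. The leading coefficient is $C=\alpha q_0/(2\sqrt{\pi})^{1/2}$ (not $\alpha q_0/(2\sqrt\pi)$), and it is of order $t_0^{-1}$. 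The constant in front of your non-decaying $C\varepsilon$, by contrast, is built from norms of the rescaled data, for example $\|\widetilde q(0)\|_{L^2}\sim t_0^{-1/4}$. That error is therefore of order at least $t_0^{-3/4}$, and enlarging $t_0$ makes the ratio of error to $C$ worse, not better. Even under your scaling $q_0\sim t_0^{-1/2}$ the ratio would be independent of $t_0$. So ``choose $t_0$ large'' does not yield $\limsup|h|<C/2$.

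The repair is that this proposition needs no smallness of $\varepsilon$ at all. The $C\varepsilon$ in your Step 3 appears only because you discarded decay.
\begin{itemize}
\item Because $V$ sits in the frame $c_*t$ with no logarithmic drift, $\langle e_0,\widetilde q(\tau)\rangle\equiv q_0$ exactly, and the projected equation $a'+a=\dots$ is damped.
\item In Duhamel form, the drift contributes at most $2\varepsilon e^{-\tau/2}\sup_s|\langle (e_0)_\eta+\tfrac{\eta}{4}e_0,\widetilde p(s)\rangle|$.
\item The shift/weight error contributes $\int_0^\tau e^{-(\tau-s)}O(\delta(s))\,ds=O\big(\varepsilon(\tau+\ln t_0)e^{-\tau/2}\big)$. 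If you move the shift onto $e_0$, only $\|\widetilde q\|_{L^2}$ is needed for this bound.
\item Hence $a(\tau)\to\alpha q_0$ for every fixed $t_0$.
\item Combine this with your decay of $\widehat{\widetilde p}$ and the pointwise bound $|\widehat{\widetilde p}(\tau,\eta)|\le C_K\eta(\tau+1)e^{-\tau/2}$. Then $h(t,x)\to0$ uniformly for $0\le x\le\sigma\sqrt{t+1}$, so the $\limsup$ is $0<C/2$.
\end{itemize}
The drift term in the $\widetilde p$ energy identity even has a favorable sign here, namely $-\varepsilon e^{-\tau/2}\int\frac{\eta}{4}\widetilde p^{\,2}\le 0$, so no condition on $t_0$ is needed anywhere. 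This is exactly where the present proposition is easier than Proposition~\ref{asymptoticslinearlemma}. The extra $-2p$ term supplies damping that removes the $O(\varepsilon)$ residual, which does persist for the undamped coefficient $q^k$ there.
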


Now we are ready to prove the Proposition on the lower bound of $u$.

\begin{proposition} \label{lowerbarrier}
There holds

$$
\liminf _{t \rightarrow+\infty} u\left(t, c_{*} t-\frac{1}{2 \lambda_{*}} \ln t+y\right)>0 \quad \text { for all } y \in \mathbb{R}.
$$

and uniformly in $y$ in any compact set. 

Also, for every $\sigma>0$, there exists $\kappa>0$ such that

$$
u\left(t, c_{*} t-\frac{1}{2 \lambda_{*}} \ln t+y\right) \geq \kappa y e^{-\lambda_{*} y} \quad \text { for all } t \geq 1 \text { and } 0 \leq y \leq \sigma \sqrt{t}.
$$
\end{proposition}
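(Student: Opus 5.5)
We follow the strategy of \cite{short_proof} for the lower barrier, working in the frames $\xi_u(t)=c_*t+\frac{1}{\lambda_*}\ln(t+t_0)$, $\xi_v(t)=c_*t$ introduced above. First we produce a subsolution for $\widetilde v$: by \cite{short_proof} (or by a direct comparison at the level of the single Fisher--KPP equation), the $v$ component lies above $c_*t-\frac{3}{2\lambda_*}\ln t$ up to bounded shifts. In particular $\widetilde v(t,x)\ge \underline V(t,x)$ for a suitable small multiple of a solution of the linearized Dirichlet problem for $V$ in \eqref{linearizedonnewframes}, truncated so as to be small. Concretely, we take $\underline V = \kappa_0 V$ with $V$ solving \eqref{linearizedonnewframes} and use that in the unshifted frame $V(t,x)\approx x e^{-\lambda_*x}(t+t_0)^{-3/2}$. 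The nonlinear correction $f(\kappa_0 V)\ge f'(0)\kappa_0V - C\kappa_0^2V^2$ is handled, as in \cite{short_proof}, by replacing $V$ with $V-\kappa_1 V^{1+\gamma}$-type corrections, or by using the time-dependent Dirichlet frame trick. We may alternatively simply invoke the lower half of Bramson's asymptotics for $v$ proved in \cite{short_proof}.

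Next we build the subsolution for $\widetilde u$. We set $\underline u(t,x)=\kappa\,(U(t,x)-K U(t,x)^{1+\gamma})$ (or $\kappa U$ cut off where it exceeds a small threshold), where $(U,V)$ solve \eqref{linearizedonnewframes} with compactly supported nonnegative data placed below $\widetilde u(0,\cdot)$ and $\widetilde v(0,\cdot)$ on $(0,\infty)$. We then check that
\[
\mathcal N(\underline u):=\underline u_t-c_*\underline u_x-\tfrac{1}{\lambda_*(t+t_0)}\underline u_x-\underline u_{xx}-f(\underline u)-\alpha\widetilde v(t,x+\tfrac{1}{\lambda_*}\ln(t+t_0))(1-\underline u)\le 0 .
\]
Using the equation for $U$, this reduces to showing
\[
\alpha\kappa V(\cdot+\tfrac{1}{\lambda_*}\ln(t+t_0)) - \alpha\widetilde v(\cdot+\tfrac{1}{\lambda_*}\ln(t+t_0))(1-\underline u) + \bigl(f'(0)\underline u - f(\underline u)\bigr)+\text{(correction terms)}\le 0 .
\]
Here $f'(0)s-f(s)\le Cs^2$ since $f\in C^2$. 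The coupling is favorable once $\kappa V\le \widetilde v(1-\underline u)$, which follows from the first step with $\kappa\le\kappa_0/2$ and $\underline u\le 1/2$. The quadratic error $C\underline u^2$ is absorbed by the $-K U^{1+\gamma}$ correction. The key input is Proposition~\ref{lemmanewframes}, which gives $U\sim x e^{-\lambda_*x}(t+t_0)^{-3/2}$ on $0\le x\le\sigma\sqrt t$. This makes $U$ uniformly small, so that $U^{1+\gamma}$ is dominated by $U$, and the correction's own time derivative and $\partial_{xx}$ terms can be controlled as in \cite{short_proof}. Dirichlet data $U(t,0)=0$ make comparison on $x>0$ legitimate since $\widetilde u>0$ there.

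Finally we translate back. Comparison gives $u(t,x+\xi_u(t))\ge \underline u(t,x)\ge c\,x e^{-\lambda_*x}(t+t_0)^{-3/2}$ for $1\le x\le\sigma\sqrt t$. Writing $x+\xi_u(t)=c_*t-\frac{1}{2\lambda_*}\ln t+y$ gives $x=y-\frac{3}{2\lambda_*}\ln t+O(1)$, so that $x e^{-\lambda_* x}t^{-3/2}\approx (y-\frac{3}{2\lambda_*}\ln t) e^{-\lambda_*y}$. This is off by a $\ln t$ in the prefactor. To recover exactly $\kappa y e^{-\lambda_*y}$, we instead run the same argument with the Dirichlet boundary at $c_*t-\frac{1}{2\lambda_*}\ln(t+t_0)$, taking $r=\frac12$ as in the heuristics. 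The case $r=\frac12$ is exactly the $i=k-1$ case of Proposition~\ref{asymptoticslinearlemma}, which yields $U\ge c\,x e^{-\lambda_*x}$ on $0\le x\le\sigma\sqrt t$. We then run the subsolution argument there, where the drift sign term $+\frac{1}{2\lambda_*(t+t_0)}U_x$ is harmless. With this choice the bound $u\ge\kappa y e^{-\lambda_*y}$ for $0\le y\le\sigma\sqrt t$ follows directly. The $\liminf>0$ for $y$ in compacts, including negative $y$, then follows from positivity at $y=1$ combined with a standard spreading/Harnack argument: $u$ is bounded below behind a point where it is bounded below, because the KPP dynamics pushes the state $1$ forward, and the forcing only helps.

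The main obstacle is the nonlinear absorption in the second step. The subsolution must be small uniformly, yet $U$ is of order one near $x=O(1)$ in the $r=\frac12$ frame. We would therefore first try the \cite{short_proof} device of taking $\underline u=\kappa(U-KU^{1+\gamma})$ with $\kappa$ small and verifying the sign, checking in particular that the cross term against $\widetilde v$ has the right sign via the comparison $\kappa V\le\widetilde v$.
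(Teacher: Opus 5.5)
Your first comparison, in the frame $\xi_u(t)=c_*t+\frac{1}{\lambda_*}\ln(t+t_0)$, gives the same estimate as the paper's first stage: $u(t,x+\xi_u(t))\ge c\,xe^{-\lambda_*x}(t+t_0)^{-3/2}$ for $x\in[0,\sigma\sqrt t]$. For the absorption, use the paper's $\underline u=\gamma(t)U$ with $\gamma'=-C(t+t_0)^{-3/2}\gamma^2$; it stays bounded below because $\sup_x U(t,\cdot)\le C(t+t_0)^{-3/2}$ is integrable in time.

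The gap is in what you do next. At $c_*t-\frac{1}{2\lambda_*}\ln t+y$ this bound reads $(y-\frac{3}{2\lambda_*}\ln t)e^{-\lambda_*y}$. That is already the right size for $y\sim\sigma\sqrt t$. It is only vacuous for $y\lesssim\frac{3}{2\lambda_*}\ln t$, so the task is to carry far-field information back to $y=O(1)$, not to change frames.

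Your frame change to a Dirichlet boundary at $c_*t-\frac{1}{2\lambda_*}\ln(t+t_0)$ fails, for three reasons.
\begin{itemize}
\item There $U\approx Cxe^{-\lambda_*x}$ does not decay, so the error $f'(0)\kappa U-f(\kappa U)$ is of order $\kappa^2$ at $x=O(1)$ for all times. The coupling margin $\alpha\bigl(\widetilde v(1-\kappa U)-\kappa V\bigr)$ is at most of order $\widetilde v\sim t^{-1}\ln t$ there, because the front of $v$ sits $\frac{1}{\lambda_*}\ln t$ behind.
\item A time factor $\gamma(t)$ would need roughly $\gamma'\le-c\gamma^2+O(t^{-1}\ln t)$. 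That forces $\gamma\to0$ algebraically, which reintroduces a logarithmic loss in position.
\item The correction $\kappa(U-KU^{1+\beta})$ (your exponent, renamed to avoid a clash with $\gamma(t)$) adds $\kappa K\beta U^{\beta-1}\bigl((1+\beta)U_x^2-\lambda_*^2U^2\bigr)$ to $N(\underline u)$, up to a favorable term of order $U^{\beta}t^{-1}\ln t$. For $U\approx Cxe^{-\lambda_*x}$ this is positive and non-decaying near the Dirichlet boundary and in the tail. This is the double-root phenomenon at the critical speed: $\partial_t-c_*\partial_x-\partial_{xx}-f'(0)$ maps $e^{-\mu x}$ to $-(\mu-\lambda_*)^2e^{-\mu x}$ for every $\mu$, so the supercritical device $e^{-\lambda x}-Ke^{-(1+\beta)\lambda x}$ has no room.
\end{itemize}
Your closing paragraph names this obstacle but does not resolve it, and your Harnack/spreading step for $y$ in compacts depends on it.

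The missing idea is the paper's second stage, taken from \cite{short_proof}. Use the decaying-frame bound only at the far point $c_*t+\sigma\sqrt t$, where it gives $u(t,c_*t+\sigma\sqrt t)\ge\widetilde\delta\,\sigma e^{-\lambda_*\sigma\sqrt t}$ for large $t$. Then compare $u$ on $[0,c_*t+\sigma\sqrt t]$ with the shifted minimal-speed wave $\widetilde U_{c_*}(x-c_*t+\frac{1}{2\lambda_*}\ln t+x_2)$ of an auxiliary nonlinearity $f_1\le f$ with $f_1(0)=f_1(\frac12)=0$ and $f_1'(0)=f'(0)$.
\begin{itemize}
\item It is a subsolution of the $u$-equation: the logarithmic shift contributes $\frac{1}{2\lambda_*t}\widetilde U_{c_*}'\le0$, and the forcing $-\alpha v(1-\underline u)\le0$ only helps.
\item On the left boundary, $u(t,0)\ge\frac12\ge\underline u$ for large $t$.
\item On the right boundary, $\widetilde U_{c_*}(\sigma\sqrt t+\frac{1}{2\lambda_*}\ln t+x_2)\approx\widetilde B\sigma e^{-\lambda_*x_2}e^{-\lambda_*\sigma\sqrt t}$. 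The factor $t^{-1/2}$ from the shift cancels the $\sqrt t$ in the wave's linear prefactor, so the wave lies below $u$ once $x_2$ is large, and also at the starting time.
\end{itemize}
Comparison gives $u(t,c_*t-\frac{1}{2\lambda_*}\ln t+y)\ge\widetilde U_{c_*}(y+x_2)$. This yields the $\liminf$ for every $y\in\mathbb{R}$ directly, with no Harnack argument. Via $\widetilde U_{c_*}(s)\sim\widetilde B\,se^{-\lambda_*s}$ it also yields the bound $\kappa ye^{-\lambda_*y}$. Without this wave-matching step your argument never reaches $y=O(1)$.
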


\begin{proof}

Our nonlinear system \eqref{mysystem} in the moving frames $\xi_{u}(t)=c_{*}t+\frac{1}{\lambda_{*}}\ln \left(t+t_{0}\right), \xi_{v}(t)=c_{*}t$
is described by the equations
\begin{equation} \label{twotypesnewframenonlinear}
\begin{split}
\widetilde{u}_{t}-c_{*}\widetilde{u}_{x} -\frac{1}{\lambda_{*}(t+t_{0})}\widetilde{u}_{x} & =\widetilde{u}_{xx}+f(\widetilde{u})+\alpha \widetilde{v}(t,x+\frac{1}{\lambda_{*}}\ln (t+t_0))(1-\widetilde{u}), \\
\widetilde{v}_{t}-c_{*}\widetilde{v}_{x} & =\widetilde{v}_{xx}+f(\widetilde{v}) .
\end{split}
\end{equation}

The linearized problem with Dirichlet boundary conditions imposed at $\xi_{u}$ and $\xi_{v}$ is given by

\begin{equation}
\begin{split}
&U_{t}-c_{*}U_{y}-\frac{1}{\lambda_{*}(t+t_0)} U_{y}=U_{yy}+f'(0)U+\alpha V(t, y+ \frac{1}{\lambda_{*}}\ln (t+t_0)), y>0 \\
&V_{t}-c_{*} V_{y}=V_{yy}+f'(0)V, y>0, \\
&U(t,0)=0, V(t,0)=0.
\end{split}
\end{equation}

From Proposition $\ref{lemmanewframes}$ we have that $
U(t, y)=y e^{-\lambda_{*}y}(t+t_0)^{-\frac{3}{2}}t_0\left[C e^{-\frac{y^{2}}{ 4\left(t+t_{0}\right)}}+h(t, y)\right]
$

where, for each $\sigma>0$,

$$
\limsup _{t \rightarrow+\infty} \sup _{0 \leq y \leq \sigma \sqrt{t+1}}|h(t, y)|<\frac{C}{2}
$$

As a result, we have the following bounds

\begin{equation}
\begin{split}
& U(t,y) \leq C(t+t_0)^{-\frac{3}{2}} \text{ for some constant } C \\
&U(t,y) \geq y e^{-\lambda_{*}y} (t+t_0)^{-\frac{3}{2}} \text{ for all } t\geq t_1,  y \in [0, K\sqrt{t}]
\end{split}
\end{equation}

We want to turn $U(t,y)$ into 
a subsolution for the $\tilde{u}$ component of the nonlinear system \eqref{twotypesnewframenonlinear}.

Let us set
$$N(w)= w_{t}-c_{*}w_{y}-\frac{1}{\lambda_{*}(t+t_0)}w_{y}-w_{yy}-f(w) -\alpha \widetilde{v}(t, y+\frac{1}{\lambda_{*}}\ln (t+t_0)) + \alpha \widetilde{v}(t, y+ \frac{1}{\lambda_{*}} \ln (t+t_0) )w
$$
where $\widetilde{v}$ solves the nonlinear equation
$
\widetilde{v}_{t}-c_{*}\widetilde{v}_{y}=\widetilde{v}_{yy}+f(\widetilde{v}).
$

A function $w(t,y)$ is a subsolution for \eqref{twotypesnewframenonlinear} if $N(w) \leq 0$.

$$
N(U)=f'(0)U-f(U)+\alpha V(t,y+ \frac{1}{\lambda_{*}}\ln (t+t_0) ) -\alpha \widetilde{v}(t, y+\frac{1}{\lambda_{*}}\ln (t+t_0)) + \alpha  \widetilde{v}(t, y+ \frac{1}{\lambda_{*}}\ln (t+t_0) ) U
$$
where $V$ solves the linearized equation $
V_{t}-c_{*}V_{y}=V_{yy}+f'(0)V.
$

We notice that $U$ is not a subsolution, but we define 
$\underline{u}(t,y)=U(t,y)\gamma(t)$ with $\gamma(t)$ to be determined.  We compute

\begin{equation}
\begin{split}
&N(\underline{u}(t,y))=U_{t} \gamma(t) +U\gamma'(t) -c_{*}U_{y}\gamma(t)-\frac{1}{\lambda_{*}(t+t_0)}U_{y}\gamma(t)-U_{yy}\gamma(t)-f(U\gamma(t)) \\
&-\alpha \widetilde{v}(t,y+\frac{1}{\lambda_{*}}\ln (t+t_0))\left(1-U\gamma(t)\right) \\
&= U\gamma'(t) +\gamma(t)\left(U_{t}-c_{*}U_{y}-\frac{1}{\lambda_{*}(t+t_0)}U_{y}-U_{yy}-f'(0)U \right)+\left(f'(0)U\gamma(t) -f(U\gamma(t))\right) \\
&-\alpha \widetilde{v}(t, y+\frac{1}{\lambda_{*}}\ln (t+t_0)) 
+ \alpha \widetilde{v}(t, y+ \frac{1}{\lambda_{*}} \ln (t+t_0)) U \gamma(t) \\
&= U \gamma'(t) + \alpha \gamma(t) V(t, y+ \frac{1}{\lambda_{*}}\ln (t+t_0)) +\left(f'(0)U\gamma(t) -f(U\gamma(t))\right) \\
&-\alpha \widetilde{v}(t, y+\frac{1}{\lambda_{*}}\ln (t+t_0)) 
+ \alpha \widetilde{v}(t, y+ \frac{1}{\lambda_{*}} \ln (t+t_0))U \gamma(t) \\
&\leq U \gamma'(t)+ M U^2 \gamma^{2}(t) \\
&+\alpha \left( \gamma(t)V(t, y + \frac{1}{\lambda_{*}}\ln (t+t_0))+\gamma(t) U \widetilde{v}(t, y + \frac{1}{\lambda_{*}}\ln (t+t_0))  - \widetilde{v}(t, y + \frac{1}{\lambda_{*}}\ln (t+t_0)) \right) \\
&\leq U\left( \gamma'(t)+M U \gamma^{2}(t)
\right) \\
&+\alpha \left( \gamma(t)V(t, y + \frac{1}{\lambda_{*}}\ln (t+t_0))+\gamma(t) U \widetilde{v}(t, y + \frac{1}{\lambda_{*}}\ln (t+t_0))  - \widetilde{v}(t, y + \frac{1}{\lambda_{*}}\ln (t+t_0)) \right) \\
\end{split}
\end{equation}

We used that since $f \in C^{2}([0,1])$, there exists $M>0$ such that $f^{\prime}(0) s-f(s) \leq M s^{2}$ in a neighborhood of $0$, i.e. for $s \in\left[0, s_{0}\right)$ for some $s_0>0$. We will choose $\gamma(t)$ so that $N(\underline{u}(t,y)) \leq 0$. 
\newline 
Since we know that $U(t,y) \leq C (t+t_0)^{-\frac{3}{2}}$, for the first term to be negative it suffices to choose $\gamma(t)$ such that 
$$
\gamma^{\prime}(t)=-C_2(t+t_0)^{-\frac{3}{2}}\gamma^{2}, \quad t>0
$$
for some $C_2>0$ big enough. 
The solution to this ODE is given by 

$$\gamma(t)=\frac{\gamma(0)\sqrt{t+t_0}}{\sqrt{t+t_0}+2C_2 \gamma(0)\left(\frac{\sqrt{t+t_0}}{\sqrt{t_0}}-1\right)}$$ which can be bounded from above and from below. 
\newline 
Such a choice of $\gamma$ also works to ensure that the other term in the upper bound of $N(\underline{u})$ is also non-positive. Indeed, as shown in \cite{short_proof}, we have 
$
\gamma(t)V \leq \frac{\widetilde{v}}{2}
$
as long as  $(2\gamma(t))' \leq -C'(t+1)^{-\frac{3}{2}}\gamma^2$ and 
$
\gamma(t) U \widetilde{v} \leq  \frac{\widetilde{v}}{2}
$ since 
\small
$$\gamma(t)U \leq \frac{C \gamma(0)\sqrt{t+t_0}}{(t+t_0)^{\frac{3}{2}}\left(\sqrt{t+t_0}+2C \gamma(0)\left(\frac{\sqrt{t+t_0}}{\sqrt{t_0}}-1\right)\right)} \leq \frac{C \gamma(0)}{(t+t_0)\left(\sqrt{t+t_0}+2C \gamma(0)\left(\frac{\sqrt{t+t_0}}{t_0}-1\right)\right)}\leq \frac{1}{2} $$

by choosing $\gamma(0)$ small enough. Combining these inequalities we get $N(\underline{u}(t,y)) \leq 0$. 

Since we can bound $\gamma(t)$ from below after time $t_1$, we also have that
$$
\underline{u}(t,y)=U(t,y)\gamma(t) \geq c y e^{-\lambda_{*} y}(t+t_0)^{-\frac{3}{2}} \text{ for } t \geq t_1 \text{ and for } y \in [0, K \sqrt{t}] \text{ for any } K>0
$$

Since $\underline{u}$ is a subsolution for \eqref{twotypesnewframenonlinear} we get
$$
\widetilde{u}(t,y) \geq \underline{u}(t,y) 
$$

which implies that for any $K>0$ 
$$
u(t, y+c_{*}t+ \frac{1}{\lambda_{*}} \ln (t+t_0))  \geq \underline{u}(t,y) \geq c y e^{- \lambda_{*}y}(t+t_0)^{-\frac{3}{2}} \text { for all } t \geq t_1 \text { and } y \in[0, K\sqrt{t}]
$$
Fix $\sigma>0$,  for $\xi(t)=\sigma \sqrt{t}$ we get
$$
u(t, c_{*}t+\xi(t)) \geq c(\xi(t)-\frac{1}{\lambda_{*}}\ln (t+t_0)) e^{-\lambda_{*}\xi(t)}(t+t_0) (t+t_0)^{-\frac{3}{2}} \text { for all } t \geq t_1 
$$
i.e.
$$
u(t, c_{*}t+\xi(t)) \geq c(\xi(t)-\frac{1}{\lambda_{*}}\ln (t+t_0)) e^{-\lambda_{*}\xi(t)}(t+t_0)^{-\frac{1}{2}} \text { for all } t \geq t_1 . 
$$

Now we fix $\sigma>0$, and let $\xi(t)=\sigma \sqrt{t}$. We will construct an explicit subsolution for $u(t,x)$ on the interval $[0,c_{*}t+\xi(t)]$. There exists $\widetilde{\delta}>0$ and $T_{1} \geq 2$ such that

$$
u\left(t, c_{*} t+\xi(t)\right) \geq 2\widetilde{\delta} \xi(t) e^{-\lambda_{*} \xi(t)} (t+t_0)^{-\frac{1}{2}} \geq \widetilde{\delta} \sigma e^{- \lambda_{*} \xi(t)}   \text { for all } t \geq T_{1}
$$

The component $u(t,x)$ of the solution to the nonlinear problem $\eqref{mysystem}$ is a supersolution for the classic Fisher-KPP equation with initial condition $v(0,x)=u(0,x)$, since 
$N(u)=u_{t}-u_{xx}-f(u)= \alpha v(1-u)>0 $ and since we have that $v(t,x) \rightarrow 1$  as $t \rightarrow+\infty$ locally uniformly in $x \in \mathbb{R}$, we have that also $u(t,x) \rightarrow 1$  as $t \rightarrow+\infty$ locally uniformly in $x \in \mathbb{R}$. Therefore, there exists $T_{2} \geq T_{1}$ such that $u(t, 0) \geq 1 / 2$ for all $t \geq T_{2}$. 
\newline  
Now let $f_{1}$ be a $C^{1}$ function such that $f_{1} \leq f$ in $[0,1 / 2],  f_{1}(0)=f_{1}(1 / 2)=0, f_{1}^{\prime}(0)=f^{\prime}(0)$ and $f_{1}>0$ on $(0,1 / 2)$. The function $f_{1}$ then satisfies $f_{1}(u) \leq f(u) \leq f_{1}^{\prime}(0) u$ for all $u \in[0,1 / 2]$. 
We have that then for $c_{*}=2\sqrt{f_1'(0)}=2\sqrt{f'(0)}$ there exists a minimal speed travelling front $\widetilde{U}_{c_{*}}\left(x-c_{*} t\right)$ with nonlinearity $f_{1}$ instead of $f$, such that

$$
\widetilde{U}_{c_{*}}^{\prime \prime}+c_{*} \widetilde{U}_{c_{*}}^{\prime}+f_{1}\left(\widetilde{U}_{c_{*}}\right)=0
$$
which satisfies $0<\widetilde{U}_{c_{*}}<\frac{1}{2}$ in $\mathbb{R}$, $\widetilde{U}_{c_{*}}(-\infty)=1 / 2$, $\widetilde{U}_{c_{*}}(+\infty)=0$. The profile $\widetilde{U}_{c_{*}}$ is decreasing in $\mathbb{R}$ with asymptotics 

\begin{equation} \label{twaveasymptotics}
\widetilde{U}_{c_{*}}(s) \sim \widetilde{B} s e^{- \lambda_{*} s} \text { as } s \rightarrow+\infty
\end{equation}

for some constant $\widetilde{B}>0$. 
\newline 

We now take $\gamma>0$ and fix $x_{1} \in \mathbb{R}$ large enough so that $\widetilde{B}(\gamma+1) e^{- \lambda_{*}x_{1}}<\widetilde{\delta}$. There exists $T_{3} \geq T_{2}$ with $\left(\frac{1}{2 \lambda_{*}} \ln (t)+x_{1}\right)<\gamma \xi(t)$ for $t \geq T_{3}$, so it follows that

\small
\begin{equation}
\begin{split}
& \widetilde{U}_{c_{*}}\left(\frac{1}{2 \lambda_{*}} \ln t+\xi(t)+x_{1}\right) \leq  \widetilde{B} \left(\frac{1}{2 \lambda_{*}}\ln(t)+\xi(t)+x_1\right) e^{-\frac{1}{2}\ln(t)}e^{-\lambda_{*} \xi(t)}e^{- \lambda_{*} x_1} \\
& \leq \widetilde{B}(\gamma+1)\xi(t) e^{-\lambda_{*} x_1}t^{-\frac{1}{2}}e^{- \lambda_{*} \xi(t)} 
\leq \widetilde{\delta} \xi(t) e^{- \lambda_{*} \xi(t)} t^{-\frac{1}{2}}= \widetilde{\delta} \sigma e^{- \lambda_{*} \xi(t)} \text { for all } t \geq T_{3}.\\
\end{split}
\end{equation}

We have $\min _{x \in\left[0, c_{*}T_{3}+\xi\left(T_{3}\right)\right]} u\left(T_{3}, x\right)>0$, and $U_{c_{*}}$ is decreasing, $\widetilde{U}_{c_{*}}(+\infty)=0$, so there exists $x_{2} \geq x_{1}$ such that 

\begin{equation} \label{inequalityforT3}
\widetilde{U}_{c_{*}}\left(x-c_{*} T_{3}+\frac{1}{2 \lambda_{*} } \ln T_{3}+x_{2}\right) \leq \widetilde{U}_{c_{*}}\left(-c_{*} T_{3}+\frac{1}{2 \lambda_{*} } \ln T_{3}+x_{2}\right) \leq 
u\left(T_{3}, x\right) \text { for all } x \in\left[0, c_{*}T_{3}+\xi\left(T_{3}\right)\right]
\end{equation}

We will now define a subsolution $\underline{u}$ for the $u$  component of the system \eqref{mysystem},  as follows:

$$
\underline{u}(t, x)=\widetilde{U}_{c_{*}}\left(x-c_{*}t+\frac{1}{2 \lambda_{*}} \ln t+x_{2}\right) \text { for all } t \geq T_{3} \text { and } x \in\left[0, c_{*} t+\xi(t)\right]
$$

It follows from $\eqref{inequalityforT3}$ that $\underline{u}\left(T_{3}, x\right) \leq u\left(T_{3}, x\right)$ for all $x \in\left[0, c_{*}T_{3}+\xi\left(T_{3}\right)\right]$. 
\newline 

Also, since $x_{2} \geq x_{1}$, and $\widetilde{U}_{c_{*}}$ is decreasing, we have

$$
\underline{u}\left(t, c_{*}t+\xi(t)\right)=\widetilde{U}_{c_{*}}\left(\xi(t)+\frac{1}{2 \lambda_{*} } \ln t + x_2 \right) \leq \widetilde{\delta} \sigma e^{- \lambda_{*}\xi(t)}  \leq u\left(t, c_{*} t+\xi(t)\right).
$$

for all $t \geq T_{3}$.

We also have $\underline{u}(t, 0) \leq 1 / 2 \leq u(t, 0)$ for all $t \geq T_{3}$. \\

We set 
$$N(w)=w_{t}-w_{xx}-f(w)-\alpha v (1-w(t,x))$$ 

where $v$ satisfies the classic Fisher-KPP equation

$$v_{t}=v_{xx}+f(v).$$

A function $w$ is a subsolution for the $u$ component of the system \eqref{mysystem} if $N(w) \leq 0$. 

Using that $f_{1}(u) \leq f(u)$ in $[0,\frac{1}{2}]$, that the traveling wave is decreasing, $\widetilde{U}_{c_{*}}^{\prime}<0$, and satisfies the ODE 

$$
\widetilde{U}_{c_{*}}^{\prime \prime}+c_{*} \widetilde{U}_{c_{*}}^{\prime}+f_{1}\left(\widetilde{U}_{c_{*}}\right)=0
$$

we get 

\begin{equation}
\begin{split}
&\underline{u}_{t}(t, x)-\underline{u}_{x x}(t, x)-f(\underline{u}(t, x))-\alpha v(1-\underline{u}(t, x)) \leq\left(-c_{*}+\frac{1}{2 \lambda_{*} t}\right) \widetilde{U}_{c_{*}}^{\prime}(z)-\widetilde{U}_{c_{*}}^{\prime \prime}(z)-f_{1}\left(\widetilde{U}_{c_{*}}(z)\right) \\
&-\alpha v(1-\widetilde{U}_{c_{*}}(z))=\frac{1}{2 \lambda_{*} t} \widetilde{U}_{c_{*}}^{\prime}(z) -\alpha v(1-\widetilde{U}_{c_{*}}(z)) \leq 0\\
\end{split}
\end{equation}
where we set $z=x-c_{*} t+\frac{1}{2 \lambda_{*}} \ln t+x_{2}$.\\

As a result $\underline{u}$ is a subsolution of $u$ for $t \geq T_{3}$ and $x \in\left[0, c_{*} t+\xi(t)\right]$.We get

\begin{equation} \label{subsolutioninequality}
u(t, x) \geq \underline{u}(t, x)=\widetilde{U}_{c_{*}}\left(x-c_{*} t+\frac{1}{2 \lambda_{*}} \ln t+x_{2}\right) \text { for all } t \geq T_{3} \text { and } 0 \leq x \leq c_{*}t+\xi(t).
\end{equation}

The above inequality implies in particular that for any given $y \in \mathbb{R}$,

$$
u\left(t, c_{*} t-\frac{1}{2 \lambda_{*}} \ln t+y\right) \geq \underline{u}\left(t, c_{*} t-\frac{1}{2 \lambda_{*}} \ln t+y\right)=\widetilde{U}_{c_{*}}\left(y+x_{2}\right)>0
$$
for $t$ large enough. This implies

$$
\liminf _{t \rightarrow+\infty} u\left(t, c_{*}t-\frac{1}{2\lambda_{*}} \ln t+y\right)>0 \quad \text { for all } y \in \mathbb{R}
$$

and uniformly in any compact set.

Also, fixing $\sigma>0$, and taking $\xi(t)=\sigma \sqrt{t}$, the inequality $\eqref{subsolutioninequality}$ along with the traveling wave asymptotics $\eqref{twaveasymptotics}$  give us that there exists $\kappa>0$ such that

$$
u\left(t, c_{*} t-\frac{1}{2 \lambda_{*}} \ln t+y\right) \geq \widetilde{U}_{c_{*}}\left(y+x_{2}\right) \geq \kappa y e^{-\lambda_{*} y} \quad \text { for all } t \geq 1 \text { and } 0 \leq y \leq \sigma \sqrt{t}.
$$

This completes the proof of Proposition $\ref{lowerbarrier}$. 

\end{proof}

\begin{Corollary}
For any $m \in(0,1)$, and $E_{m}(t)$ the $m$ level set of the $u$ component of \eqref{mysystem}, there are constants $t_{0}>0$ and $C_{1},C_{2} \in \mathbb{R}$ such that

\begin{equation} \label{maxlevelset}
\max E_{m}(t) \leq c_{*} t-\frac{1}{2 \lambda_{*}} \ln t+C_{1} \text { for all } t \geq t_{0}
\end{equation}

\begin{equation} \label{minlevelset}
\min E_{m}(t) \geq c_{*} t-\frac{1}{2 \lambda_{*}} \ln t+C_{2} \text { for all } t \geq t_{0}
\end{equation}

\end{Corollary}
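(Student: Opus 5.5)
The two inequalities come from different earlier results: \eqref{maxlevelset} from the supersolution argument of Theorem~\ref{cascadingupperbound}, and \eqref{minlevelset} from the subsolution of Proposition~\ref{lowerbarrier}. Fix $m\in(0,1)$ throughout.

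For \eqref{maxlevelset}, take $k=2$ and $i=1$ in Theorem~\ref{cascadingupperbound}. Then $\tfrac32-k+i=\tfrac12$, so the frame is exactly $c_{*}t-\frac{1}{2\lambda_{*}}\ln t$. The proof of that theorem gives \eqref{limsupmaxcascading}: $\limsup_{t}\max_{x\ge c_{*}t-\frac{1}{2\lambda_{*}}\ln t+y}u(t,x)\to0$ as $y\to+\infty$. Choose $y=C_1$ large enough that this limsup is below $m/2$. Then for $t\ge t_0$ we have $u<m$ on $[c_{*}t-\frac{1}{2\lambda_{*}}\ln t+C_1,\infty)$, and \eqref{maxlevelset} follows.

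For \eqref{minlevelset}, use the inequality \eqref{subsolutioninequality} from the proof of Proposition~\ref{lowerbarrier}:
\[
u(t,x)\ge \widetilde U_{c_*}\Big(x-c_{*}t+\tfrac{1}{2\lambda_{*}}\ln t+x_2\Big),\qquad t\ge T_3,\ 0\le x\le c_{*}t+\xi(t).
\]
The main obstacle is that $\widetilde U_{c_*}<1/2$. When $m<1/2$ this is harmless: pick $z_m$ with $\widetilde U_{c_*}(z_m)>m$ (possible because $\widetilde U_{c_*}(-\infty)=1/2$ and $\widetilde U_{c_*}$ is decreasing). Then $u>m$ on $[0,\,c_{*}t-\frac{1}{2\lambda_{*}}\ln t+z_m-x_2]$, so every point of $E_m(t)$ lies to the right of this interval, which gives $C_2=z_m-x_2$.

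For $m\ge 1/2$ I would bootstrap with a second comparison on the region behind the front. Fix $\theta<1$ close to $1$ with $\theta>m$. Because $f$ is KPP, $f>0$ on $(0,1)$, and $u$ is a supersolution of the scalar Fisher--KPP equation since the coupling term is nonnegative. Plan: take a lower barrier of the form $\widetilde U^{\theta}_{c_*}(x-c_*t+\frac{1}{2\lambda_*}\ln t+x_3)$, where $\widetilde U^{\theta}_{c_*}$ is the minimal-speed wave for a nonlinearity $f_\theta\le f$ vanishing at $0$ and $\theta$. Compare it with $u$ on the same moving domain, using three inputs:
\begin{itemize}
\item the already established lower bound of order $e^{-\lambda_*\xi}$ at $x=c_*t+\xi(t)$;
\item $u(t,0)\to1$, which gives the left boundary condition;
\item $u(T,\cdot)>0$ on compacts, which gives the initial ordering after a large shift $x_3$.
\end{itemize}
Since this wave has the same $s e^{-\lambda_* s}$ tail, the computation in Proposition~\ref{lowerbarrier} repeats verbatim. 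The output is $u>m$ up to $c_{*}t-\frac{1}{2\lambda_{*}}\ln t-C$, hence $\min E_m(t)\ge c_{*}t-\frac{1}{2\lambda_{*}}\ln t+C_2$.

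Finally, points of $E_m(t)$ with $x<0$ are excluded by definition, so all level-set points satisfy both bounds. Taking $t_0$ to be the maximum of the times obtained above finishes the proof.
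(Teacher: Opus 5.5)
Your proposal is correct and is essentially the paper's proof: the upper bound is read off from the supersolution argument for Theorem~\ref{cascadingupperbound} with $k=2$, $i=1$, and the lower bound comes from rerunning the subsolution argument of Proposition~\ref{lowerbarrier} with an auxiliary KPP nonlinearity $f_\theta\le f$, $f_\theta'(0)=f'(0)$, vanishing at a level $\theta\in(m,1)$ (the paper takes $\theta=\frac{1+m}{2}$). Your separate treatment of $m<1/2$ directly from the original barrier is just the case $\theta=1/2$, and it becomes unnecessary once this replacement is made for all $m$.
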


\begin{proof}
The inequality $\eqref{maxlevelset}$ was proven in \eqref{maxlevelsetcascading}, taking $k=2$ and $i=1$. The inequality $\eqref{minlevelset}$ follows from $\eqref{subsolutioninequality}$  by replicating the proof above with a nonlinearity $f_1$ that vanishes at $0$  and at $\frac{1+m}{2}$ (instead of vanishing at $0$ and $\frac{1}{2}$ as done above). 
\end{proof}
With this corollary the proof of Theorem $\ref{levelsets}$ is complete. 

\section{Convergence to the minimal speed Fisher-KPP wave for the cascading system} \label{Section 4}

In this section we will prove Theorem \ref{convergencetothewavecascading} for the cascading system 

\begin{equation} 
\begin{split}
&v^{i}_{t}= v^{i}_{xx}+v^{i}-(v^{i})^{2}+\alpha v^{i+1}\left(1-v^{i}\right) \text{ for }  i \in \{1,\ldots,k-1\}, \quad{ t>0, x \in \mathbb{R},} \\
& v^{k}_{t}= v^{k}_{xx}+v^{k}-(v^{k})^{2},  \quad{ t>0, x \in \mathbb{R}}.\\
\end{split}
\end{equation} 

As before, we go into moving frames $\xi_{i}(t)=2 t-\left(\frac{3}{2}+i-k\right) \ln(t)$. Setting $v^{i}(t,x)=\widetilde{v^{i}}(t,x-\xi_{i}(t))$ gives us 

\begin{equation} \label{logmovingframecascadingnot_0}
\begin{split}
&(\widetilde{v^{i}})_{t}-2(\widetilde{v^{i}})_{x}+\frac{\left( \frac{3}{2}+i-k\right)}{t}(\widetilde{v^{i}})_{x} = (\widetilde{v^{i}})_{xx}+\widetilde{v^{i}}-(\widetilde{v^{i}})^{2} +\alpha \widetilde{v^{i+1}}(t,x+\ln(t)) \left(1-\widetilde{v^{i}}\right)  \text{ for }  i \leq k-1 \\
& (\widetilde{v^{k}})_{t}-2(\widetilde{v^{k}})_{x}+\frac{\frac{3}{2}}{t}(\widetilde{v^{k}})_{x}= (\widetilde{v^{k}})_{xx}+\widetilde{v^{k}}-(\widetilde{v^{k}})^{2}.\\
\end{split}
\end{equation} 

We will do the same transformations as above, but now apply them directly to the nonlinear system. We get rid of exponential factor $\widetilde{v^{i}}(t, x)=e^{-x} z^{i}(t, x)$
and obtain the system for $z^{i}(t,x)$

\begin{equation} \label{z_system}
\begin{split}
&z^{i}_{t}+\frac{(\frac{3}{2}+i-k)}{t} (z^{i}_{x}-z^{i} )=z^{i}_{x x}-e^{-x}(z^{i})^2+\frac{\alpha }{t }z^{i+1}(t,x+\ln t) (1-e^{-x}z^{i}) \\
&z^{k}_{t}+\frac{3}{2 t}(z^{k}_{x}-z^{k})=z^{k}_{x x}-e^{-x}(z^{k})^2 \\
\end{split}
\end{equation}

Then we go into the self-similar variables $\tau=\ln t, \eta=\frac{x}{\sqrt{t}}$, and set $z^{i}(t,x)=e^{\frac{\tau}{2}}p^{i}(\tau, \eta)$, to get the equations for $p^{i}(\tau, \eta)$

\begin{equation} \label{selfsimilarnonlinear}
\begin{split}
& p^{i}_{\tau}+L p^{i}+ (k-i) p^{i}+\left( \frac{3}{2}+i-k \right)e^{-\frac{\tau}{2}}p^{i}_{\eta}+e^{\frac{3\tau}{2}-\eta e^{\frac{\tau}{2}}}(p^{i})^{2}= \alpha p^{i+1}\left(\tau, \eta+\tau e^{-\frac{\tau}{2}}\right)\left( 1- e^{ \frac{\tau}{2}-\eta e^{\frac{\tau}{2}}}p^{i}\right) \\
&p^{k}_{\tau}+ L p^{k} +\frac{3}{2}e^{-\frac{\tau}{2}}p^{k}_{\eta}+e^{\frac{3\tau}{2}-\eta e^{\frac{\tau}{2}}} (p^{k})^{2}=0 \\
\end{split}
\end{equation}

where $$
L v=-v_{\eta \eta}-\frac{\eta}{2} v_{\eta}-v
$$

Through this transformation we can see how the Dirichlet problem appears naturally. 

For $
\eta \ll-\tau e^{-\tau / 2},
$ the nonlinear terms in the left-hand and right-hand side of the equation for $p^{i}$ become exponentially large. On the other hand, for
$
\eta \gg \tau e^{-\tau / 2}
$ the quadratic term $e^{\frac{3\tau}{2}-\eta e^{\frac{\tau}{2}}}(p^{i})^{2}$ is very small and 
$\left( 1- e^{ \frac{\tau}{2}-\eta e^{\frac{\tau}{2}}}p^{i}\right)$  is very close to $1$, so the linearized Dirichlet problem with the forcing term $\alpha p^{i+1}$ is a good proxy.
\newline 

In the self-similar variables the linearized Dirichlet problem is

\begin{equation} \label{linearizedselfsimilardirichletforP}
\begin{split}
& P^{i}_{\tau}+L P^{i}+ (k-i) P^{i}+\left( \frac{3}{2}+i-k \right)e^{-\frac{\tau}{2}}P^{i}_{\eta}= \alpha P^{i+1}\left(\tau, \eta+\tau e^{-\frac{\tau}{2}}\right), \eta>0\\
&P^{k}_{\tau}+ L P^{k} +\frac{3}{2}e^{-\frac{\tau}{2}}P^{k}_{\eta}=0, \eta>0 \\
&P^{i}(\tau,0)=0
\end{split}
\end{equation}

Through this change of variables, we can see how a particular translation of the wave will be chosen. For the linearized Dirichlet problem in the self-similar variables, we can show that as $\tau \rightarrow+\infty$,

\begin{equation} \label{asymptoticslinearp}
P^{i}(\tau,\eta) \sim \alpha^{i}_{\infty} \eta e^{-\eta^{2} / 4}, \quad \eta>0
\end{equation}

\noindent for some $\alpha^{i}_{\infty}>0$. 

Then approximating the nonlinear problem $\ref{selfsimilarnonlinear}$  by the linearized problem $\ref{linearizedselfsimilardirichletforP}$ in the moving frame gives us 
\newline

\begin{equation} \label{uapprox}
\widetilde{v^{i}}(t, x)=e^{-x} z^{i}(t, x) \sim e^{-x} e^{\tau/2} p^{i}(\tau,\eta) \sim e^{-x} e^{\tau/2} P^{i}(\tau,\eta) \sim e^{-x} e^{\tau / 2} \alpha^{i}_{\infty} \eta e^{-\eta^{2} / 4}=\alpha^{i}_{\infty} x e^{-x} e^{-x^{2} /(4 t)},
\end{equation}

\noindent at least for $x$ of the order $O(\sqrt{t})$.

If we accept that $\widetilde{v^{i}}$ converges to a translate $x^{i}_{\infty}$ of $U_{c_{*}}$, then for large $x$ we have

\begin{equation} \label{twaveapprox}
\tilde{v^{i}}(t, x) \sim U_{c_{*}}\left(x+x^{i}_{\infty}\right) \sim x e^{-x-x^{i}_{\infty}}
\end{equation}

Comparing \eqref{uapprox} and \eqref{twaveapprox} gives us $
x^{i}_{\infty}=-\ln \alpha^{i}_{\infty}
$ and this determines the unique translation. 

However, as it stands, this argument doesn't hold because each of these asymptotics corresponds to a different range of $x$. 
The asymptotics \eqref{asymptoticslinearp} are valid for $\eta \sim O(1)$, so correspondingly for $x \sim O(\sqrt{t})$. However, the convergence to the wave happens at scales of $x \sim O(1)$ around the front, and the traveling wave asymptotics $\eqref{twaveapprox}$ assume that $x$ is large, but finite. The scale  $x \sim O(1)$ corresponds to $\eta \sim O(e^{-\frac{\tau}{2}})$, for which we have no information from the self-similar variables.  

Establishing a similar result for that scale $\eta \sim O(e^{-\frac{\tau}{2}})$ is hopeless, as the error term in the equation $\eqref{selfsimilarnonlinear}$ is then of order at least $\eta$, while our desired approximation is $\eta e^{-\frac{\eta^2}{4}}$. To overcome this issue, we prove stabilization at an intermediate scale  $x \sim O\left(t^{\gamma}\right)$ with a small $\gamma>0$. We are essentially using $x \sim O(t^{\gamma})$ for small $\gamma$ as our transition region to pass from $x \sim O(\sqrt{t})$ to $x \sim O(1)$. \\
For the stabilization at this intermediate scale we need to show that  

$$
p^{i}(\tau, \eta) \sim \alpha^{i}_{\infty} \eta e^{-\eta^{2} / 4}
$$

\noindent for $\eta \sim e^{-(\frac{1}{2}-\gamma) \tau}$ with a small $0<\gamma << \frac{1}{2}$. We will show that this stabilization is sufficient to ensure the stabilization on the scale $x \sim O(1)$ and convergence to a unique wave. This is the strategy followed in \cite{convergencetothewave}.

To show stabilization at the spatial scales $x \sim O\left(t^{\gamma}\right)$, we need the following generalization of Lemma 4.2 from \cite{convergencetothewave}.

\begin{lemma} \label{stabilization}
Suppose that $\widetilde{v^{i}}$ satisfy the equations
\begin{equation} \label{logmovingframecascading}
\begin{split}
&(\widetilde{v^{i}})_{t}-2(\widetilde{v^{i}})_{x}+\frac{\left( \frac{3}{2}+i-k\right)}{t}(\widetilde{v^{i}})_{x} = (\widetilde{v^{i}})_{xx}+\widetilde{v^{i}}-(\widetilde{v^{i}})^{2} +\alpha \widetilde{v^{i+1}}(t,x+\ln(t)) \left(1-\widetilde{v^{i}}\right)  \text{ for }  i \leq k-1 \\
& (\widetilde{v^{k}})_{t}-2(\widetilde{v^{k}})_{x}+\frac{\frac{3}{2}}{t}(\widetilde{v^{k}})_{x}= (\widetilde{v^{k}})_{xx}+\widetilde{v^{k}}-(\widetilde{v^{k}})^{2}\\
\end{split}
\end{equation} 
\noindent There exist constants $\alpha^{i}_{\infty} >0$ with the following property. For any $\gamma>0$ and all $\varepsilon>0$ we can find $T_{\varepsilon}$ so that for all $t>T_{\varepsilon}$ we have

\begin{equation}
\begin{split}
&\left| \widetilde{v^{i}}\left(t, x_{\gamma}\right)-\alpha^{i}_{\infty} x_{\gamma} e^{-x_{\gamma}} e^{-\frac{x_{\gamma}^{2}}{4 t}}\right| \leq \varepsilon x_{\gamma} e^{-x_{\gamma}} e^{-\frac{x_{\gamma}^{2}}{6 t}} \\
\end{split}
\end{equation}

with $x_{\gamma}=t^{\gamma}$.
\end{lemma}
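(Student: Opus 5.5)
We argue by downward induction on $i$, following the strategy of Lemma 4.2 in \cite{convergencetothewave}. The base case $i=k$ is exactly that lemma for the single Fisher--KPP equation, with $\alpha^{k}_{\infty}$ the constant obtained there. For $i<k$ we assume the statement for $\widetilde{v^{i+1}}$, and in fact the stronger intermediate information produced in its proof. This consists of the self-similar asymptotics $p^{i+1}(\tau,\eta)\approx \alpha^{i+1}_\infty \eta e^{-\eta^2/4}$ on $\eta\in[e^{-(1/2-\gamma)\tau},\sigma]$, together with two-sided barriers $|p^{i+1}|\le C\eta e^{-\eta^2/6}$. We then work with the nonlinear equation \eqref{selfsimilarnonlinear} for $p^i$.

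\textbf{Step 1: a priori two-sided bounds.} We use the supersolution $B V^{i}$ from the proof of Theorem \ref{cascadingupperbound}, together with Proposition \ref{asymptoticslinearlemma}, to obtain $\widetilde{v^{i}}(t,x)\le C x e^{-x}e^{-x^2/(6t)}$ for $x\ge 1$. The lower bound comes from a subsolution of the type used in Proposition \ref{lowerbarrier}, namely $\gamma(t)$ times the solution of the linearized Dirichlet problem in a slightly shifted frame $\xi_i(t)+x_0$. This gives $\widetilde{v^{i}}\ge c\,x e^{-x}e^{-x^2/(4t)}$ on $1\le x\le\sigma\sqrt t$. In the $p^i$ variables these say that $p^i\asymp \eta e^{-\eta^2/4}$ for $\eta\gtrsim e^{-\tau/2}$. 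As a consequence, the quadratic term $e^{3\tau/2-\eta e^{\tau/2}}(p^i)^2$ and the factor $1-e^{\tau/2-\eta e^{\tau/2}}p^i$ differ from $0$ and $1$ respectively only by $O(e^{-\tau})$ once $\eta\ge \tau^2 e^{-\tau/2}$.

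\textbf{Step 2: comparison with Dirichlet problems on a slightly shifted half-line.} Fix $\tau_1$ large and set $\eta_1(\tau)=\tau^2 e^{-\tau/2}$. On $\eta>\eta_1(\tau)$ we sandwich $p^i$ between solutions $P^i_\pm$ of the linear problem \eqref{linearizedselfsimilardirichletforP}. The forcing of $P^i_\pm$ is taken to be $\alpha(1\pm e^{-\tau/4})$ times the induction upper or lower envelope of $p^{i+1}(\tau,\eta+\tau e^{-\tau/2})$. The boundary data at $\eta_1$ come from Step 1 and are small, namely $O(\eta_1)$. The initial data at $\tau_1$ are $p^i(\tau_1,\cdot)$ itself. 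The $L^2$/spectral analysis of Lemmas \ref{boundedl2normlemma}--\ref{boundedlinfnormforderivative} then applies verbatim to $P^i_\pm$, after the symmetrizing change $w=e^{\eta^2/8}e^{-\tau/2}P$. The principal coefficients satisfy
\[
q^i_\pm(\tau)=e^{-(k-i)(\tau-\tau_1)}q^i_\pm(\tau_1)+\alpha\int_{\tau_1}^\tau e^{-(k-i)(\tau-s)}h_i^\pm(s)\,ds+O(e^{-\tau/4}),
\]
where $h^\pm_i\to \alpha^{i+1}_\infty$ by the induction hypothesis. Hence $q^i_\pm(\tau)\to \alpha^{i}_\infty:=\alpha\,\alpha^{i+1}_\infty/(k-i)$, up to an error that is small in $\tau_1$. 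This recovers the constant $\alpha^{k-i}\alpha^k_\infty/(k-i)!$. The difference $P^i_+-P^i_-$ is controlled by the same energy estimate, and its size is $O(e^{-\tau_1/4})+O((\tau+1)e^{-(\tau-\tau_1)/2})$.

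\textbf{Step 3: pointwise control at $\eta=e^{-(1/2-\gamma)\tau}$ (main obstacle).} The spectral bounds give $L^2$ and compact-set control. What we need, however, is relative accuracy $\varepsilon$ at the point $\eta_\gamma=t^{\gamma-1/2}$, where the principal profile is only of size $\eta_\gamma$. We therefore use the bound $|\widehat{w^i}(\tau,\eta)|\le C\,(\eta-\eta_1)(\tau+1)e^{-\tau/2}$, obtained from the $L^\infty$ derivative estimate as at the end of Lemma \ref{boundedlinfnormforderivative}. Since $\eta_1\ll\eta_\gamma$, this error is $o(\eta_\gamma)$. This step is delicate for two reasons. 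First, the boundary is moving, and the forcing $p^{i+1}(\tau,\eta+\tau e^{-\tau/2})$ has its own boundary layer at the same scale, so the parabolic gradient estimate must be carried up to $\eta_1$ uniformly. Second, the errors from the induction hypothesis enter with relative size $\varepsilon$ rather than absolutely. I would first try to handle this by working with $w^i/\eta$, or equivalently with an odd extension, so that the Dirichlet point becomes a regular point and interior Schauder estimates apply. Finally, we translate back via $\widetilde{v^{i}}=e^{-x}e^{\tau/2}p^i$. Note that $x_\gamma^2/t\to0$, so the Gaussian factors $e^{-x_\gamma^2/4t}$ and $e^{-x_\gamma^2/6t}$ are both $1+o(1)$. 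The desired inequality then follows by choosing $\tau_1$, and then $T_\varepsilon$, large.
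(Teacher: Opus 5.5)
Your outline works, but it is organized differently from the paper. The paper does not estimate at $\eta_\gamma$ directly. It reads the statement off Lemma~\ref{nonlinearasymptotics}, a global expansion $p^{i}=(\alpha^{i}_\infty+h^{i}(\tau))\eta_+e^{-\eta^2/4}+R^{i}e^{-\eta^2/6}$ whose remainder is small in absolute terms, $|R^{i}|\le C_{\gamma'}e^{-(1/2-\gamma')\tau}$. That remainder is negligible against $\eta_\gamma=e^{-(1/2-\gamma)\tau}$ once $\gamma'<\gamma$.

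The expansion is proved for all components together:
\begin{itemize}
\item coupled upper and lower barrier systems on $\eta>\mp e^{-(1/2-\gamma)\tau}$. The upper one uses $\widetilde{v^{i}}<1$ to get super-exponentially small boundary data. The lower one carries the absorption term $C_\gamma e^{-e^{\gamma\tau/2}}\underline{p^{i}}$.
\item an analysis of these barriers with Lemma~\ref{differentialinequalities}, whose remainders carry a factor $\eta$ in front.
\item compactness along a sequence $\tau_n$, and a bootstrap through the unperturbed system of Lemma~\ref{linearunperturbedasymptotics3}, which identifies $\alpha^{i}_\infty$ through $\int_0^\infty \eta P^{k}_\infty\,d\eta$.
\end{itemize}

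You instead induct downward and treat the already understood $p^{i+1}$ as an external forcing. You build barriers for $p^{i}$ alone on $\eta>\tau^2e^{-\tau/2}$ and read off the constant from the strictly damped mode equation $q^{i}_\tau+(k-i)q^{i}=\alpha h_i+\dots$.

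Your route makes three things transparent: only the neutral $k$-th mode needs compactness and a bootstrap (supplied by \cite{convergencetothewave}), no subsequences are needed for $i<k$, and $\alpha^{i}_\infty=\frac{\alpha}{k-i}\alpha^{i+1}_\infty$ follows directly. The price is twofold:
\begin{itemize}
\item a stronger induction hypothesis: uniform relative asymptotics of $p^{i+1}$ on $[e^{-(1/2-\gamma')\tau},\sigma]$ for every $\sigma$, plus a Gaussian bound that must be re-proved at each step;
\item a gradient estimate up to a moving Dirichlet boundary, with a forcing that is only crudely controlled in a boundary layer.
\end{itemize}
The second point is the same crux the paper delegates to the remainder structure asserted in Lemma~\ref{differentialinequalities}, whose proof it omits. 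In exchange, the paper's route yields the stronger global expansion, with a rate for $R^{i}$.

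Some details need repair, though none is load-bearing.

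\emph{Step 1 lower bound.} It does not follow from $\gamma(t)$ times the linearized solution in the frame $\xi_i(t)+x_0$. There the linearized solution stays of order one, so the Riccati equation for $\gamma$ forces $\gamma(t)\to0$ like $1/t$. Proposition~\ref{lowerbarrier} moves the frame $\frac{3}{2\lambda_*}\ln t$ further ahead precisely so that $U=O(t^{-3/2})$ is integrable, and the paper remarks that this route does not extend to $i\le k-2$. Since you only use the upper bound, drop the lower bound.

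\emph{Step 2 lower barrier.} The factor $1-e^{-\tau/4}$ on the forcing does not by itself make $p^{i}$ a supersolution of the problem defining $P^{i}_-$ where the lower envelope of $p^{i+1}$ is zero. The reason is that $e^{3\tau/2-\eta e^{\tau/2}}(p^{i})^2$ is a term in $p^{i}$, not in the forcing. As in the paper, add an absorption term $\delta(\tau)P^{i}_-$, e.g.\ $\delta(\tau)=Ce^{3\tau/2-\tau^2}$.

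\emph{Normalization.} In the variables of \eqref{linearizedselfsimilardirichletforP}, the symmetrizing change is $w=e^{\eta^2/8}P$, without the factor $e^{-\tau/2}$. With that factor, your $q^{i}_\pm$ would tend to zero.

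\emph{Rates.} The rates $(\tau+1)e^{-\tau/2}$ for $\widehat{w^{i}}$ and $O(e^{-\tau_1/4})$ for $P^{i}_+-P^{i}_-$ are not available. The forcing comes from the nonlinear $p^{i+1}$, whose convergence has no rate. You only get smallness after taking $\sigma$ and then $\tau_1$ large, which is all the statement requires.
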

This has already been proven for $\tilde{v^{k}}$ in \cite{convergencetothewave} and we will prove it for $\tilde{v^{i}}$, as a direct consequence of the following Lemma.

\begin{lemma} \label{nonlinearasymptotics}
Let $\{p^{i}(\tau, \eta)\}_{i=1}^k$ be the solution to the following nonlinear problem on $\mathbb{R}$, in self-similar variables

\begin{equation} \label{selfsimilarnonlinear2}
\begin{split}
& p^{i}_{\tau}+L p^{i}+ (k-i) p^{i}+\left( \frac{3}{2}+i-k \right)e^{-\frac{\tau}{2}}p^{i}_{\eta}+e^{\frac{3\tau}{2}-\eta e^{\frac{\tau}{2}}}(p^{i})^{2}= \alpha p^{i+1}\left(\tau, \eta+\tau e^{-\frac{\tau}{2}}\right)\left( 1- e^{ \frac{\tau}{2}-\eta e^{\frac{\tau}{2}}}p^{i}\right) \\
&p^{k}_{\tau}+ L p^{k} +\frac{3}{2}e^{-\frac{\tau}{2}}p^{k}_{\eta}+e^{\frac{3\tau}{2}-\eta e^{\frac{\tau}{2}}} (p^{k})^{2}=0 \\
\end{split}
\end{equation}

\noindent where $L v= -v_{\eta \eta}-\frac{\eta v_{\eta}}{2}-v$, with initial conditions $p^{i}(0, \eta)=p^{i}_{0}(\eta)$ such that $p^{i}_{0}(\eta)=0$ for all $\eta>M$, with some $M>0$, and $p^{i}_{0}(\eta)=\left(e^{\eta}\right)$ for $\eta<0$. There exist $\alpha^{i}_{\infty}>0$ and functions $h^{i}(\tau)$ such that $\lim _{\tau \rightarrow+\infty} h^{i}(\tau)=0$, and such that we have, for any $\gamma^{\prime} \in(0,1 / 2)$

$$
p^{i}(\tau, \eta)=\left(\alpha^{i}_{\infty}+h^{i}(\tau)\right) \eta_{+} e^{-\eta^{2} / 4}+R^{i}(\tau, \eta) e^{-\eta^{2} / 6}, \quad \eta \in \mathbb{R}
$$

with

$$
|R^{i}(\tau, \eta)| \leq C_{\gamma^{\prime}} e^{-\left(1 / 2-\gamma^{\prime}\right) \tau}
$$

and where $\eta_{+}=\max (0, \eta)$.

\end{lemma}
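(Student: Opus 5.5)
We argue by downward induction on $i$, exactly as in the linear analysis of Proposition \ref{asymptoticslinearlemma}. The base case $i=k$ is Lemma 4.2 of \cite{convergencetothewave} (the single Fisher--KPP equation in self-similar variables), which gives $p^{k}=(\alpha^{k}_{\infty}+h^{k}(\tau))\eta_+e^{-\eta^2/4}+R^{k}e^{-\eta^2/6}$ with $|R^{k}|\le C_{\gamma'}e^{-(1/2-\gamma')\tau}$. Assuming the expansion for $p^{i+1}$, we prove it for $p^{i}$ with $\alpha^{i}_{\infty}=\frac{\alpha}{k-i}\alpha^{i+1}_{\infty}$, hence $\alpha^{i}_{\infty}=\frac{\alpha^{k-i}}{(k-i)!}\alpha^{k}_{\infty}$, consistent with Lemma \ref{qiclosetoqk} and the statement of Theorem \ref{convergencetothewavecascading}.

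\textbf{Step 1: reduction to a Dirichlet problem at a shifted boundary.} We follow \cite{convergencetothewave} and split $\mathbb{R}$ at $\bar\eta(\tau)=\tau^{2}e^{-\tau/2}$ or similar. For $\eta\ge\bar\eta(\tau)$ the factors $e^{\frac{3\tau}{2}-\eta e^{\tau/2}}$ and $e^{\frac{\tau}{2}-\eta e^{\tau/2}}$ are super-exponentially small. There $p^{i}$ is squeezed between the solutions of two linear problems with forcing $\alpha p^{i+1}(\tau,\eta+\tau e^{-\tau/2})$: the upper one drops the quadratic term and the factor $(1-\cdot)$, and the lower one keeps a perturbative factor $(1-e^{-\tau})$. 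Both have Dirichlet data at $\bar\eta(\tau)$. Using the bounds $0<\widetilde{v^{i}}<1$ and the upper bound of Theorem \ref{cascadingupperbound} in the frame $\xi_i$, we get $p^{i}(\tau,\bar\eta)=O(\bar\eta\, e^{\ldots})$ small. So the boundary mismatch contributes only an error of size $O(\tau^{2}e^{-\tau/2})$ after the $\eta_+$ normalisation, which is absorbed into $R^{i}$. For $\eta<\bar\eta$ we use $p^{i}=e^{-\tau/2}z^{i}$ with $z^{i}\le e^{x}$, i.e.\ $|p^{i}|\le e^{-\tau/2}e^{\eta e^{\tau/2}}$. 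This is $\le Ce^{-(1/2-\gamma')\tau}$ on $\eta\le\bar\eta$ once $\bar\eta$ is chosen as $\gamma'\tau e^{-\tau/2}$, so it too goes into $R^{i}$.

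\textbf{Step 2: linear asymptotics with a forcing that is itself asymptotically known.} On $\eta>\bar\eta$ we conjugate by $e^{-\eta^2/8}$ to the operator $M+(k-i)$, as in \eqref{cascadingMoperator}, and project onto $e_0$ and $e_0^\perp$. For the projection $q^{i}$ we get $q^{i}_\tau+(k-i)q^{i}=\alpha\langle e_0,\mathcal T_i[w^{i+1}]\rangle+O(e^{-\tau/2})$. The inductive hypothesis gives $\langle e_0,\mathcal T_i[w^{i+1}]\rangle=\alpha^{i+1}_\infty+o(1)$. Variation of constants, as in Lemma \ref{qiclosetoqk}, then gives $q^{i}\to\frac{\alpha}{k-i}\alpha^{i+1}_\infty$; we define $h^{i}$ as the difference. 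The orthogonal part is controlled by the energy estimate of Lemma \ref{boundedorthogonalremainderlemma}. The difference is that the forcing's $e_0^\perp$ component is now $O(e^{-(1/2-\gamma')\tau})$ (weighted by $e^{-\eta^2/6}$) rather than $O((\tau+1)e^{-\tau/2})$. The spectral gap $1+(k-i)>1/2$ then yields $\|\widehat{w^{i}}\|\le Ce^{-(1/2-\gamma')\tau}$.

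\textbf{Step 3: main obstacle.} The main difficulty is passing from these weighted $L^2$ bounds to the pointwise weight $e^{-\eta^2/6}$ on all of $\mathbb{R}$, uniformly down to $\eta\sim\bar\eta$. In particular we need $|R^{i}|\le Ce^{-(1/2-\gamma')\tau}$ near the boundary, where we must not lose the factor $\eta$. For this we would compare with explicit barriers $\pm Ce^{-(1/2-\gamma')\tau}\phi(\eta)$, where $\phi$ is a supersolution of $\partial_\tau+L+(k-i)-(1/2-\gamma')$ behaving like $e^{-\eta^2/6}$ for large $\eta$. The existence of such a $\phi$ uses that $e^{-\eta^2/6}$ decays more slowly than the Gaussian eigenfunction, so the drift term gives a favorable sign for large $\eta$, as in \cite{convergencetothewave}. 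We then combine this with local parabolic regularity, as in Lemma \ref{boundedlinfnormforderivative}, on compact sets. The shift $\tau e^{-\tau/2}$ in the forcing is handled using the derivative bounds on $p^{i+1}$, exactly as in the estimates for $T_1$ and $T_2$ in Lemma \ref{qiclosetoqk}.
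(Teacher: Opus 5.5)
Your route is genuinely different from the paper's, and its core is sound.

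\textbf{Comparison with the paper.} The paper builds coupled barrier systems $(\overline{p^{i}})_i$ and $(\underline{p^{i}})_i$, each forced by the barrier of the next component. It extracts only subsequential limits via Lemma \ref{differentialinequalities}. It then identifies the limit by a bootstrap: the translates $p^{i}(\tau+\tau_n,\cdot)$ converge to the unperturbed system of Lemma \ref{linearunperturbedasymptotics3}, and the barriers are restarted at late times to squeeze all limit points.

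You instead make the full conclusion for $p^{i+1}$ the inductive hypothesis and force the barriers for $p^{i}$ with the true $p^{i+1}$. For $i<k$ the principal mode is damped at rate $k-i\ge 1$, so $q^{i}$ is slaved to the forcing. Variation of constants then gives convergence and $\alpha^{i}_{\infty}=\frac{\alpha}{k-i}\alpha^{i+1}_{\infty}$ directly, with no subsequences or bootstrap. Those are really needed only for the undamped component $i=k$, which you import from \cite{convergencetothewave}. The paper's scheme is uniform in $i$ and reuses that machinery verbatim. Yours is shorter and makes the origin of $\frac{\alpha^{k-i}}{(k-i)!}$ transparent.

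\textbf{The gap in Step 1.} As written, Step 1 does not close, because a single cutoff $\bar\eta$ is asked to do two incompatible jobs.
At $\bar\eta=\gamma'\tau e^{-\tau/2}$ (i.e.\ $x=\gamma'\ln t$), the coefficient $e^{\frac{3\tau}{2}-\eta e^{\tau/2}}$ equals $e^{(3/2-\gamma')\tau}$. The nonlinear terms are therefore not perturbative there, and no lower barrier can be placed at that boundary.
At $\bar\eta=\tau^{2}e^{-\tau/2}$, the trivial bound only gives $p^{i}<e^{\tau^{2}-\tau/2}$ on $\eta<\bar\eta$, which is useless.

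There are two ways to repair this. One is to keep $\bar\eta=\tau^{2}e^{-\tau/2}$ and control the window $0\le\eta\le\bar\eta$ with the pointwise bound $\widetilde{v^{i}}(t,x)\le\rho(1+x)e^{-x}$ from the proof of Theorem \ref{cascadingupperbound}, rather than with $\widetilde{v^{i}}<1$. This gives $0<p^{i}\le C\tau^{2}e^{-\tau/2}$ there. The other is to do as the paper does: put the upper barrier at a different boundary ($-e^{-(1/2-\gamma)\tau}$, with data $e^{-e^{\gamma\tau}}$), where dropping the nonnegative nonlinear terms is legitimate, and use it to bound $p^{i}$.

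\textbf{Two further points for the lower barrier and the sandwich.}
Your lower barrier needs an absorption term $\kappa(\tau)\underline{p}$, with $\kappa$ super-exponentially small, to handle $e^{\frac{3\tau}{2}-\eta e^{\tau/2}}(p^{i})^{2}$. The factor $1-e^{-\tau}$ only handles the coupling factor $1-e^{\frac{\tau}{2}-\eta e^{\tau/2}}p^{i}$. The sup bound on $p^{i}$ that this absorption requires can be taken from the upper barrier.
You should also state explicitly that the $e_0$-coefficients of the two barriers differ by $O(e^{-(1/2-\gamma')\tau})$. This holds because their difference solves an ODE damped at rate $k-i\ge 1$ with forcing of that size. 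Without it, the sandwich does not produce a single $h^{i}$ with the stated remainder.

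Finally, on $\eta<0$ you dropped the weight. There $R^{i}=p^{i}e^{\eta^{2}/6}$, which is unbounded as $\eta\to-\infty$ because $\widetilde{v^{i}}\to 1$. The weighted bound can therefore only hold on a half-line bounded below. This defect is already in the statement itself, and it is harmless for Lemma \ref{stabilization}, which only uses $\eta>0$.
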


Now we show that Lemma $\ref{stabilization}$ is indeed an immediate consequence of Lemma $\ref{nonlinearasymptotics}$. Since  $
\widetilde{v^{i}}(t, x)=e^{-x} \sqrt{t} p^{i}\left(\ln t, \frac{x}{\sqrt{t}}\right)
$, Lemma \ref{nonlinearasymptotics} implies, with $x_{\gamma}=t^{\gamma}$,

$$
\begin{aligned}
e^{x_{\gamma}} \widetilde{v^{i}}\left(t, x_{\gamma}\right) & -\alpha^{i}_{\infty} x_{\gamma} e^{-x_{\gamma}^{2} /(4 t)}=\sqrt{t} p^{i}\left(\ln t, \frac{x_{\gamma}}{\sqrt{t}}\right)-\alpha^{i}_{\infty} x_{\gamma} e^{-x_{\gamma}^{2} /(4 t)} \\
& =h^{i}(\ln t) x_{\gamma} e^{-x_{\gamma}^{2} /(4 t)}+\sqrt{t} R^{i}\left(\ln t, \frac{x_{\gamma}}{\sqrt{t}}\right) e^{-x_{\gamma}^{2} /(6 t)}
\end{aligned}
$$

We now take $T_{\varepsilon}$ so that $|h^{i}(\ln t)|<\varepsilon$ for all $t>T_{\varepsilon}$. For the second term on the right side we write

$$
\left|R^{i}\left(\ln t, \frac{x_{\gamma}}{\sqrt{t}}\right)\right| \sqrt{t} e^{-x_{\gamma}^{2} /(6 t)} \leq C t^{\gamma^{\prime}} e^{-x_{\gamma}^{2} /(6 t)} \leq \varepsilon x_{\gamma} e^{-x_{\gamma}^{2} /(6 t)}
$$

for $t>T_{\varepsilon}$ sufficiently large, as soon as $\gamma^{\prime}<\gamma$. 
Therefore we obtain Lemma $\ref{stabilization}$ as a direct consequence of Lemma \ref{nonlinearasymptotics} .  

\subsection{Proof of Lemma $\ref{nonlinearasymptotics}$}

In order to prove Lemma $\ref{nonlinearasymptotics}$, we will  construct an upper and a lower barrier for $p^{i}$ with the correct behaviors.

We will need the following Lemmas.

\begin{lemma} \label{linearunperturbedasymptotics3}
Let $\{P^{i}\}_{i=1}^{k}$ be the solutions to the unperturbed system
\begin{equation} \label{selfsimilarlinearunperturbed}
\begin{split}
& P^{i}_{\tau}+L P^{i}+ (k-i) P^{i}= \alpha P^{i+1}\left(\tau, \eta \right), 1 \leq i \leq k-1, \eta>0 \\
&P^{k}_{\tau}+ L P^{k} =0, \eta>0 \\
& P^{i}(\tau,0)=0.
\end{split}
\end{equation}
Then we have that 

\begin{equation} \nonumber
\begin{split}
&P^{i}(\tau,\eta)=\eta e^{-\frac{\eta^2}{4}}\left( \frac{\alpha^{k-i}}{(k-i)! 2\sqrt{\pi}}\int_{0}^{+\infty} \eta P^{k}(0,\eta) d \eta +\frac{C^{i}_0 e^{-\frac{\tau}{2}}}{(2\sqrt{\pi})^{\frac{1}{2}}} \right)+ e^{-\frac{\tau}{4}} e^{-\frac{\eta^2}{6}}R^{i}(\tau,\eta).
\end{split}
\end{equation}

as $\tau \rightarrow \infty$, with $C^{i}_0$ a constant that depends on the initial conditions, 
and $R^{i}(\tau,\eta)$ is bounded. 
\end{lemma}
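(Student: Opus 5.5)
The approach is to symmetrize and use exact spectral decomposition. The unperturbed system has no drift and no shift, so it is exactly solvable in eigenmodes. Set $P^{i}(\tau,\eta)=w^{i}(\tau,\eta)e^{-\eta^{2}/8}$. Then $L$ becomes $M-\tfrac12$ with $M=-\partial_{\eta\eta}+\frac{\eta^{2}}{16}-\frac34$. Rather than carry the $e^{\tau/2}$ normalization used earlier, I keep the equations as
\[
w^{i}_{\tau}+\big(M-\tfrac12\big)w^{i}+(k-i)w^{i}=\alpha w^{i+1},\qquad w^{k}_{\tau}+\big(M-\tfrac12\big)w^{k}=0 .
\]
Compared with the statement, the leading profile $\eta e^{-\eta^2/4}$ is time-independent, so the relevant normalization must make the $e_0$-mode neutral. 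I would therefore follow the paper's convention that $L$ acts so that $e_0e^{-\eta^2/8}$ is a stationary mode, i.e. treat $L+\tfrac12$ as $M$ up to conjugation. The precise bookkeeping constant is fixed by the requirement that $P^{k}\to c\,\eta e^{-\eta^2/4}$, as in Lemma 2.2 of \cite{short_proof}.

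Next, expand in the Dirichlet eigenbasis $\{e_n\}$ of $M$ on $(0,\infty)$. These are the odd Hermite functions, with eigenvalues $n=0,1,2,\dots$ in the normalization where $e_0=\eta e^{-\eta^2/8}/(2\sqrt\pi)^{1/2}$ is the principal eigenfunction. Writing $q^{i}_n(\tau)=\langle w^{i}(\tau),e_n\rangle$, the system decouples mode by mode into triangular ODEs:
\[
\dot q^{k}_n=-n q^{k}_n,\qquad \dot q^{i}_n+(n+k-i)q^{i}_n=\alpha q^{i+1}_n .
\]
For $n=0$ I solve explicitly by downward induction. This gives $q^{k}_0\equiv q^{k}_0(0)$, and $q^{i}_0(\tau)=\frac{\alpha^{k-i}}{(k-i)!}q^{k}_0(0)+\sum_{j\ge1}c_j e^{-j\tau}$, a finite combination. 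The point is that the variation-of-constants integral $\alpha\int_0^\tau e^{-(k-i)(\tau-s)}q^{i+1}_0(s)\,ds$ reproduces the factorial exactly: it is the same computation as in Lemma \ref{qiclosetoqk}, but with no error terms. Since $\langle w^{k}(0),e_0\rangle=(2\sqrt\pi)^{-1/2}\int_0^\infty \eta P^{k}(0,\eta)\,d\eta$, multiplying by $e_0e^{-\eta^2/8}$ yields the leading term with constant $\frac{\alpha^{k-i}}{(k-i)!\,2\sqrt\pi}\int\eta P^{k}(0)$.

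The $n=1$ mode gives the $e^{-\tau/2}$ correction. To match the statement's $C^{i}_0e^{-\tau/2}\eta e^{-\eta^2/4}$ term, I would use the fact that in the paper's time normalization the first excited decay rate is $\tfrac12$. The $e^{-\tau/2}$ term in the statement should then be read as the projection of the $n=0$ transient or of the first correction onto $e_0$. Concretely, I would collect every contribution along $e_0$ that decays like $e^{-\tau/2}$ into $C^{i}_0$.

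The remaining modes form the remainder $\widehat w^{i}=\sum_{n\ge1}q^{i}_ne_n$, which I need to show decays at least like $e^{-\tau/4}$ in a weighted sup norm. I would do this in two steps.

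\textbf{$L^2$ decay.} An energy estimate exactly as in Lemma \ref{boundedorthogonalremainderlemma}, now without the perturbative drift and shift terms, uses the spectral gap to give $\|\widehat w^{i}(\tau)\|_{L^2}\le Ce^{-\tau/2}$. The factor $(\tau+1)$ appears only if resonant rates coincide; it is absorbed by lowering the rate to $e^{-\tau/4}$, which is why the statement allows the weaker $e^{-\tau/4}$.

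\textbf{Pointwise conversion.} The main obstacle is converting this $L^2$ decay into the pointwise bound $|\widehat w^{i}e^{-\eta^2/8}|\le Ce^{-\tau/4}e^{-\eta^2/6}$ uniformly in $\eta$, not just on compacts as in Lemma \ref{boundedlinfnormforderivative}. I would first try two tools. The first is the explicit Mehler kernel for $e^{-\tau M}$ on the half-line, which decays Gaussianly. The second is comparison with a barrier $Ce^{-\tau/4}e^{-\eta^2/24}$ for the $w$-equation, valid for large $\eta$ since $\frac{\eta^2}{16}$ dominates. Together with the forcing bound from $i+1$ (by induction), this gives $|\widehat w^{i}|\le Ce^{-\tau/4}e^{-\eta^2/24}$. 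Multiplying back by $e^{-\eta^2/8}$ produces the factor $e^{-\eta^2/6}$ and a bounded $R^{i}$. The compactly supported initial data ensure the Gaussian weights hold from $\tau$ near $0$, via a short-time parabolic estimate.
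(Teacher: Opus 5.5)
Your architecture is what the paper intends when it says the proof is a modification of that of Proposition~\ref{asymptoticslinearlemma}. You conjugate by $e^{-\eta^2/8}$, project onto $e_0$, and solve the triangular ODE for the principal coefficient by downward induction; here this gives $\alpha^{k-i}/(k-i)!$ with no error terms. The orthogonal part is then controlled by the spectral gap. Your tail barrier even supplies the uniform-in-$\eta$ weighted bound that Lemma~\ref{boundedlinfnormforderivative} does not give, since that lemma works on compact sets only.

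However, your normalization step is wrong, and you patch it by appealing to the conclusion. A direct computation gives $L\big(we^{-\eta^2/8}\big)=e^{-\eta^2/8}Mw$ exactly. So the system reads $w^k_\tau+Mw^k=0$ and $w^i_\tau+Mw^i+(k-i)w^i=\alpha w^{i+1}$, with no $-\tfrac12$. The shift you have in mind comes from the proof of Proposition~\ref{asymptoticslinearlemma}, where the factor $e^{\tau/2}$ absorbed an extra $+\tfrac12 p^k$ term. That term is absent here, because it was already removed by $z^i=e^{\tau/2}p^i$. Hence $L$ has Dirichlet spectrum $\{0,1,2,\dots\}$ with gap $1$, and $\eta e^{-\eta^2/4}\in\ker L$ by computation, not by ``convention''. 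Fixing a ``bookkeeping constant'' by requiring $P^k\to c\,\eta e^{-\eta^2/4}$ would be circular. Your mode ODEs $\dot q^k_n=-nq^k_n$ are correct precisely for this reason, and they contradict the $w$-equation you displayed.

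The same error breaks your treatment of the $e^{-\tau/2}$ term: no mode decays at rate $\tfrac12$. The $n=1$ mode decays like $e^{-\tau}$ and, being orthogonal to $e_0$, never contributes to the coefficient of $\eta e^{-\eta^2/4}$. The $e_0$-transients you computed decay like $e^{-\tau},\dots,e^{-(k-i)\tau}$, and these cannot be written as $C^i_0e^{-\tau/2}$ with $C^i_0$ constant. The correct resolution is that this term is redundant. Since $\eta e^{-\eta^2/4}e^{-\tau}\le Ce^{-\tau/4}e^{-\eta^2/6}$, all $e_0$-transients (like all $n\ge1$ modes) belong in $e^{-\tau/4}e^{-\eta^2/6}R^i$. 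The statement then holds with, e.g., $C^i_0=0$.

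Two smaller points in the pointwise step:
\begin{itemize}
\item The lemma is applied in the proof of Lemma~\ref{nonlinearasymptotics} with $P^i(0)=P^i_\infty\asymp\eta e^{-\eta^2/4}$, which is not compactly supported. You should instead assume $|P^i(0,\eta)|\le Ce^{-\eta^2/6}$ (i.e.\ $|w^i(0)|\le Ce^{-\eta^2/24}$), which is exactly what your barrier needs.
\item The barrier on $\{\eta>R\}$ needs control of the boundary value at $\eta=R$. This follows from the decay of $Q(\widehat w^i)$, by the same energy argument using $\|M\widehat w\|_{L^2}^2\ge Q(\widehat w)$ on $e_0^\perp$, together with $H^1\subset L^\infty$ in one dimension.
\end{itemize}
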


\begin{lemma} \label{differentialinequalities}
Consider $\omega \in(0,\frac{1}{2})$ and $G_{\overline{z^{i}}}(\tau, \eta)$ smooth, bounded, and compactly supported on $\mathbb{R}_{+}$. Let $z^{i}(\tau,\eta)$ satisfy

$$
\begin{aligned}
&| z^{i}_{\tau}+L z^{i} +(k-i)z^{i} - \alpha z^{i+1}(\tau, \eta + \tau e^{-\frac{\tau}{2}})| \leq \varepsilon e^{-\omega \tau}\left(\left|z^{i}_{\eta}\right|+|z^{i}|+G_{z^{i}}(\tau, \eta)\right), 1 \leq i \leq k-1, \eta>0\\
&|z^{k}_{\tau}+ L z^{k}| \leq   \varepsilon e^{-\omega \tau}\left(\left|z^{k}_{\eta}\right|+|z^{k}|+G_{z^{k}}(\tau, \eta)\right), \eta>0, \\
& z^{i}(\tau,0)=0 \\
\end{aligned}
$$

\noindent with initial conditions $z^{i}_{0}(\eta)$ such that $z^{i}_{0}(\eta) e^{\eta^{2} / 6} \in L^{2}\left(\mathbb{R}_{+}\right)$. 

\medskip
There exists $\varepsilon_{0}>0$,$C^{i}>0$ (depending on the initial conditions) such that, for all $0<\varepsilon<\varepsilon_{0}$, we have for $1 \leq i \leq k$
\smaller
$$
\begin{aligned}
& z^{i}(\tau, \eta)=\eta \left[e^{-\frac{\eta^2}{4}}\left( \frac{\alpha^{k-i}}{(k-i)! 2\sqrt{\pi}}\int_{0}^{+\infty} \eta z^{k}(0,\eta) d \eta +C^{i} (\tau+1) e^{-\tau/2} +\varepsilon R_{1}^{i}(\tau,\eta) \right)
 + e^{-\omega \tau} R_{2}^{i}(\tau, \eta) e^{-\frac{\eta^{2}}{6}} \right] \\
\end{aligned}
$$

where $\left\|R^{i}_{1,2}(\tau, \cdot)\right\|_{C^{3}}$ are bounded.
\end{lemma}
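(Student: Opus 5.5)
The statement to prove is Lemma \ref{differentialinequalities}: an approximate cascade of Dirichlet problems in self-similar variables, with $\varepsilon e^{-\omega\tau}$ perturbations and a shift $\tau e^{-\tau/2}$ in the coupling. I would prove it by downward induction on $i$, in the same symmetrized framework as in the proof of Proposition \ref{asymptoticslinearlemma}. The case $i=k$ is exactly Lemma 4.3 of \cite{convergencetothewave}, so I would take it as given. For $i<k$ I would set
\[
z^i=e^{-\eta^2/8}w^i .
\]
Then $z^i_\tau+Lz^i$ becomes $e^{-\eta^2/8}\big(w^i_\tau+(M-\tfrac12)w^i\big)$, where $M$ has spectrum $\{0,1,2,\dots\}$ and principal eigenfunction $e_0$. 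The equation for $w^i$ is then a linear problem with operator $M+(k-i)-\tfrac12$, driven by $\alpha\,\mathcal T_i[w^{i+1}]$ and by a remainder whose size is controlled by $\varepsilon e^{-\omega\tau}(|w^i_\eta|+|\eta||w^i|+|w^i|+\tilde G)$.

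The statement is written in the unsymmetrized variable with a constant leading term. In that variable the unperturbed $P^i$ converges to $\frac{\alpha^{k-i}}{(k-i)!}\langle\cdot\rangle\,\eta e^{-\eta^2/4}$. This is the content of Lemma \ref{linearunperturbedasymptotics3}, which I would establish first by exact integration of the mode ODEs.

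The solution is $z^i=\sum_n a^i_n(\tau)\phi_n$ with $L\phi_n=\tfrac n2\phi_n$ on the Dirichlet half line. The mode equations are
\[
\dot a^i_n+\big(\tfrac n2+k-i\big)a^i_n=\alpha a^{i+1}_n .
\]
For $n=0$ the top component satisfies $a^k_0\equiv$ const, and induction gives
\[
a^i_0(\tau)\to \frac{\alpha^{k-i}}{(k-i)!}\,a^k_0,
\]
with error $e^{-\tau}$ times a polynomial in $\tau$. The higher modes decay at least like $e^{-\tau/2}$, possibly with a factor $\tau$ from resonance. This yields the stated $C^i(\tau+1)e^{-\tau/2}$ term.

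For the perturbed problem I would repeat the projection argument of Lemmas \ref{qiclosetoqk} and \ref{boundedorthogonalremainderlemma}. Write $w^i=q^i e_0+\widehat{w^i}$ in the weighted space $L^2(e^{\eta^2/12}d\eta)$; this weight is natural since the initial data are assumed to lie in $e^{-\eta^2/6}L^2$.

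1. Energy estimates. I would prove uniform bounds on $\|w^i\|$ and $Q(w^i)$ exactly as in Lemmas \ref{boundedl2normlemma} and \ref{lemma:Qwi-bounded}. The term $(k-i)\|w^i\|^2$ supplies damping, and the perturbation is absorbed for $\varepsilon<\varepsilon_0$.

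2. The principal coefficient. The coefficient $q^i$ solves
\[
\dot q^i+(k-i)q^i=\alpha q^{i+1}+O\big(\delta(\tau)\big)+O\big(\varepsilon e^{-\omega\tau}\big).
\]
Duhamel's formula then gives $q^i-\frac{\alpha^{k-i}}{(k-i)!}q^k_\infty=O(\varepsilon)+O\big((\tau+1)e^{-\tau/2}\big)$. The $O(\varepsilon)$ part accounts for the drift of $q^k$ itself, which is $O(\varepsilon)$ by the $i=k$ case. Here $\delta(\tau)=\tau e^{-\tau/2}$ comes from the shift, estimated through $T_1,T_2$ as before.

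3. The orthogonal part. By the spectral gap, $\widehat{w^i}$ obeys
\[
\frac{d}{d\tau}\|\widehat{w^i}\|^2+2(1+k-i-C\varepsilon)\|\widehat{w^i}\|^2\lesssim \big(\varepsilon e^{-\omega\tau}+(\tau+1)e^{-\tau/2}\big)\|\widehat{w^i}\| .
\]
Since $\omega<\tfrac12$, this gives $\|\widehat{w^i}\|\lesssim e^{-\omega\tau}$.

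4. Pointwise bounds. I would upgrade these to $C^3$ bounds on $R^i_{1,2}$ by parabolic regularity on compacts, plus a maximum-principle comparison with $e^{-\eta^2/6}$ at large $\eta$. Division by $\eta$ is justified by the Dirichlet condition together with the gradient bound, as in Lemma \ref{boundedlinfnormforderivative}.

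The main obstacle is step 4 combined with the weight mismatch. The remainder has to be expressed with the factor $e^{-\eta^2/6}$ uniformly in $\eta$, not only on compact sets. Meanwhile the shift operator $\mathcal T_i$ produces a factor $e^{-\eta\delta/4}$, and the perturbation contains $|w_\eta|$, which must be controlled in the weighted norm. I would first try to prove decay of $\widehat{w^i}$ in $L^2(e^{\eta^2/12})$ using Hermite-type weighted energy estimates. Then I would pass to $L^\infty$ via a barrier of the form $Ce^{-\omega\tau}\eta e^{-\eta^2/6}$, which is a supersolution of the linear operator for $\eta$ large. Matching on a fixed compact set would use the local parabolic estimates.
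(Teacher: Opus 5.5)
Your steps 1--3 are the modification of the proof of Proposition~\ref{asymptoticslinearlemma} that the paper has in mind when it omits this proof. They are: downward induction, conjugation to $M$, projection on $e_0$, Duhamel for $q^i$ with $|h_i-q^{i+1}|\le C\delta(\tau)$, and the spectral gap for $\widehat{w^i}$. However, the set-up you write down is wrong in two places. First, $L=e^{-\eta^2/8}Me^{\eta^2/8}$ exactly, so $z=e^{-\eta^2/8}w$ gives $z_\tau+Lz=e^{-\eta^2/8}(w_\tau+Mw)$ with no $-\tfrac12$. That shift came from the extra factor $e^{\tau/2}$ in the substitution $p^i=w^ie^{-\eta^2/8}e^{\tau/2}$ used for Proposition~\ref{asymptoticslinearlemma}. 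With $M-\tfrac12$ the mode $a^k_0$ would grow like $e^{\tau/2}$, contradicting your next line. Second, the Dirichlet eigenvalues of $L$ on the half-line are $0,1,2,\dots$ (eigenfunctions $e^{-\eta^2/4}$ times odd polynomials), not $n/2$. Hence in the unperturbed cascade the non-principal modes decay like $e^{-\tau}$ and there is no resonance. The $(\tau+1)e^{-\tau/2}$ in the statement comes only from the shift $\delta(\tau)=\tau e^{-\tau/2}$, as in your step 2. Your step 3, with gap $1+k-i$, already uses the correct operator, so nothing downstream breaks, but the mode computation should be redone.

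Step 4 is where the plan has to change. The decomposition must be the $L^2(d\eta)$-orthogonal one, since $M$ is self-adjoint only there. A weighted energy estimate in $L^2(e^{\eta^2/12}d\eta)$ gives no decay as it stands. With $w=e^{-\eta^2/24}u$ the operator becomes $-u_{\eta\eta}+\frac{\eta}{6}u_\eta+(\frac{\eta^2}{18}-\frac23)u$, whose quadratic form $\int u_\eta^2+(\frac{\eta^2}{18}-\frac34)u^2$ has bottom $\frac{1}{\sqrt2}-\frac34<0$. You would therefore need a separate lower bound above $\frac34+\omega$ for this form on the invariant subspace $u\perp\eta e^{-\eta^2/6}$. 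You can skip that step, because $(\partial_\tau+L)(e^{-\omega\tau}\eta e^{-\eta^2/6})=(\frac{\eta^2}{18}-\frac12-\omega)e^{-\omega\tau}\eta e^{-\eta^2/6}$, so this is a supersolution for $\eta\ge R$. Apply it to the remainder $r^i=z^i-q^i(\tau)e_0e^{-\eta^2/8}$, not to $z^i$: the term $\eta e^{-\eta^2/4}$ is not $O(e^{-\omega\tau}\eta e^{-\eta^2/6})$ when $\eta\lesssim\sqrt{12\omega\tau}$. Take the data at $\eta=R$ from the unweighted compact-set estimates. Write the perturbation as $bz_\eta+cz+g$ with $|b|,|c|\le\varepsilon e^{-\omega\tau}$ so that the comparison principle applies. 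The remaining sources are $\alpha r^{i+1}(\tau,\eta+\delta)$, controlled by the $(i+1)$-barrier, and terms of size $(\varepsilon e^{-\omega\tau}+\delta)(1+\eta^2)e^{-\eta^2/4}$. This far-field argument is needed for the global $e^{-\eta^2/6}$ form of the statement, and the paper does not supply it: Lemma~\ref{boundedlinfnormforderivative} only controls compact sets. Finally, a bare pointwise bound on $z^i_\tau+Lz^i+\dots$ only gives $C^{1,\beta}$ regularity in $\eta$. So $C^3$ control of $R^i_{1,2}$ needs the perturbation itself to be smooth, as it is in the barrier applications.
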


The proofs of these two lemmas are omitted, as they follow from modifications of the proof of Proposition \ref{asymptoticslinearlemma}.

\medskip
We need to deduce the approximate Dirichlet conditions. We recall that $p^{i}$ are related to the solution $\widetilde{v^{i}}$ of the original system in the log shifted moving frame via  

\begin{equation}
p^{i}(\tau, \eta)=\widetilde{v^{i}}\left(e^{\tau}, \eta e^{\frac{\tau}{2}}\right) e^{-\frac{\tau}{2}+\eta e^{\frac{\tau}{2}}} 
\end{equation}

The trivial a priori bounds $0<\widetilde{v^{i}}(t, x)<1 $ then imply that we have

\begin{equation}
0<p^{i}(\tau, \eta)< e^{-\frac{\tau}{2}+\eta e^{\frac{\tau}{2}}} \\
\end{equation}

and in particular, we have

\begin{equation}
0<p^{i}\left(\tau,-e^{-(1 / 2-\gamma) \tau}\right) \leq e^{-e^{\gamma \tau}} \\
\end{equation}
Moreover, since
\begin{equation}
\begin{split}
& p^{i}_{\tau}(\tau, \eta)=\widetilde{v^{i}}_{t}\left(e^{\tau}, \eta e^{\frac{\tau}{2}}\right) e^{\frac{\tau}{2}+\eta e^{\frac{\tau}{2}}}+\frac{\eta}{2} \widetilde{v^{i}}_{x}\left(e^{\tau}, \eta e^{\frac{\tau}{2}}\right) e^{\eta e^{\frac{\tau}{2}}} \\
&+\left(\frac{\eta}{2} e^{\frac{\tau}{2}}-\frac{1}{2}\right) \widetilde{v^{i}}\left(e^{\tau}, \eta e^{\frac{\tau}{2}}\right) e^{-\frac{\tau}{2}+\eta e^{\frac{\tau}{2}}} \\
\end{split}
\end{equation}

$$
\begin{aligned}
p^{i}_{\tau}\left(\tau,-e^{-(1 / 2-\gamma) \tau}\right) & =\widetilde{v^{i}}_{t}\left(e^{\tau},-e^{\gamma \tau}\right) e^{\tau / 2-e^{\gamma \tau}}-\frac{1}{2} e^{-(1 / 2-\gamma) \tau} \widetilde{v^{i}}_{x}\left(e^{\tau},-e^{\gamma \tau}\right) e^{-e^{\gamma \tau}} \\
& -\frac{1}{2}\left(e^{\gamma \tau}+1\right) \widetilde{v^{i}}\left(e^{\tau},-e^{\gamma \tau}\right) e^{-\tau / 2-e^{\gamma \tau}} \\
& =O\left(e^{-\gamma e^{\gamma \tau}}\right) \\
\end{aligned}
$$

for $\gamma>0$ sufficiently small. Thus, the solution to $\eqref{selfsimilarnonlinear}$ satisfies

\begin{equation} \label{boundaryconditions}
\begin{split}
0<p^{i}\left(\tau,-e^{-(1 / 2-\gamma) \tau}\right) & \leq e^{-e^{\gamma \tau}} \\
\left|p^{i}_{\tau}\left(\tau,-e^{-(1 / 2-\gamma) \tau}\right)\right| & \leq C e^{-\gamma e^{\gamma \tau}}.\\
\end{split}
\end{equation}

\subsection{Construction of an upper barrier}

We will construct an upper barrier by considering the solution $\overline{p^{i}}$ to: 

\begin{equation} \label{selfsimilarupperbarrier}
\begin{split}
& \overline{p^{i}}_{\tau}+L \overline{p^{i}}+ (k-i) \overline{p^{i}}+\left( \frac{3}{2}+i-k \right)e^{-\frac{\tau}{2}}\overline{p^{i}}_{\eta}= \alpha \overline{p^{i+1}}\left(\tau, \eta+\tau e^{-\frac{\tau}{2}}\right), \eta> -e^{-(1 / 2-\gamma) \tau} \\
&\overline{p^{k}}_{\tau}+ L \overline{p^{k}} +\frac{3}{2}e^{-\frac{\tau}{2}}\overline{p^{k}}_{\eta}=0, \eta> -e^{-(1 / 2-\gamma) \tau} \\
& \overline{p^{i}}\left(\tau,-e^{-(1 / 2-\gamma) \tau}\right) =e^{-e^{\gamma \tau}}
\end{split}
\end{equation}

\noindent with compactly supported initial conditions $\overline{p^{i}}_0$ chosen so that $\overline{p^{i}}_0(\eta) \geq v^{i}(1,\eta)e^{\eta}$.

We will suppose that $p^{i+1}(\tau, \eta) \leq \overline{p^{i+1}}(\tau, \eta)$ and show that then it holds for $i$, i.e. $p^{i}(\tau, \eta) \leq \overline{p^{i}}(\tau, \eta).$ 

This was shown in \cite{convergencetothewave}, for $i=k$, so we will then be able to conclude that $p^{i}(\tau, \eta) \leq \overline{p^{i}}(\tau, \eta)$ for $1 \leq i \leq k$, for all $\tau>0, \eta > -e^{-(\frac{1}{2}-\gamma)\tau}.$

We check that this system gives us an upper barrier for $p^{i}$. Indeed, if we set  

$$N(w)=w_{\tau}+Lw+(k-i)w+\left( \frac{3}{2}+i -k \right)e^{-\frac{\tau}{2}}w_{\eta}+e^{\frac{3\tau}{2}-\eta e^{\frac{\tau}{2}}}w^2-\alpha p^{i+1}(\tau, \eta+\tau e^{-\frac{\tau}{2}})(1-e^{\frac{\tau}{2}-\eta e^{\frac{\tau}{2}}}w)$$

we have that $N(p^{i})=0$, while 

$$N(\overline{p^{i}})=e^{\frac{3\tau}{2}-\eta e^{\frac{\tau}{2}}}\overline{p^{i}}^2+\alpha \overline{p^{i+1}}(\tau, \eta+\tau e^{-\frac{\tau}{2}})-\alpha p^{i+1}(\tau, \eta + \tau e^{-\frac{\tau}{2}})+ \alpha p^{i+1}(\tau, \eta +\tau e^{-\frac{\tau}{2}}) e^{\frac{\tau}{2}-\eta e^{\frac{\tau}{2}}} \overline{p^{i}} \geq 0$$

since $$p^{i+1}(\tau, \eta +\tau e^{-\frac{\tau}{2}}) \leq \overline{p^{i+1}}(\tau, \eta+ \tau e^{-\frac{\tau}{2}}).$$

Combined with \eqref{boundaryconditions} we obtain that $\overline{p^{i}}$ is an upper barrier for $p^{i}$ and we must have $p^{i}(\tau,\eta) \leq \overline{p^{i}}(\tau,\eta)$ for all $\tau>0, \eta > - e^{-(\frac{1}{2}-\gamma)\tau}.$

We make a change of variables to shift the moving boundary to a fixed boundary at $0$

\begin{equation}
\begin{split}
& \overline{p^{i}}(\tau, \eta)=\overline{z^{i}}\left(\tau, \eta+e^{-(1 / 2-\gamma) \tau}\right)+e^{-e^{\gamma \tau}} g^{i}\left(\eta+e^{-(1 / 2-\gamma) \tau}\right) \\
\end{split}
\end{equation}

where $g^{i}(\eta)$ are smooth monotonic functions such that $g^{i}(\eta)=1$ for $0 \leq \eta<1$ and $g^{i}(\eta)=0$ for $\eta>2$. 

For the functions $\overline{z^{i}}$ we have the calculations

\begin{equation} \nonumber
\begin{split}
& \overline{p^{i}}_{\tau}=\overline{z^{i}}_{\tau} -(\frac{1}{2}-\gamma)e^{-(\frac{1}{2}-\gamma)\tau}\overline{z^{i}}_{\eta}-\gamma e^{\gamma \tau}e^{-e^{\gamma \tau}}g^{i}-e^{-e^{\gamma \tau}}(\frac{1}{2}-\gamma)e^{-(\frac{1}{2}-\gamma)\tau}(g^{i})' \\
& \overline{p^{i}}_{\eta}=\overline{z^{i}}_{\eta}+e^{-e^{\gamma \tau}}(g^{i})' \\
&\overline{p^{i}}_{\eta \eta} = \overline{z^{i}}_{\eta \eta} + e^{-e^{\gamma \tau}}(g^{i})'' \\
&\overline{p^{i+1}}(\tau, \eta + \tau e^{-\frac{\tau}{2}})=\overline{z^{i+1}}(\tau, \eta + \tau e^{-\frac{\tau}{2}}+e^{-(\frac{1}{2}-\gamma)\tau})+ e^{-e^{\gamma \tau}}g^{i+1}(\eta+\tau e^{-\frac{\tau}{2}}+e^{-(\frac{1}{2}-\gamma)\tau})
\end{split}
\end{equation}

so the equations for $\overline{z^{i}}$ are

$$
\begin{aligned}
& \overline{z^{i}}_{\tau}+L \overline{z^{i}}+(k-i)\overline{z^{i}}+\left( \left(\frac{3}{2}+i-k \right)e^{-\frac{\tau}{2}}+\gamma e^{-(\frac{1}{2}-\gamma)\tau}\right)\overline{z^{i}}_{\eta}=\alpha \overline{z^{i+1}}(\tau, \eta + \tau e^{-\frac{\tau}{2}})+ e^{-e^{\gamma \tau}}G_{\overline{z^{i}}}(\tau,\eta), \eta>0 \\
&\overline{z^{k}}_{\tau}+ L \overline{z^{k}} + \left(\frac{3}{2}e^{-\frac{\tau}{2}}+\gamma e^{-(\frac{1}{2}-\gamma)\tau} \right) \overline{z^{k}}_{\eta}  = e^{-e^{\gamma \tau}}G_{\overline{z^{k}}}(\tau,\eta), \eta>0 \\
& \overline{z^{i}}(\tau,0)=0 \\
\end{aligned}
$$

where $G_{\overline{z^{i}}},G_{\overline{z^{k}}}$ are compactly supported and smooth. We have set 

\begin{equation}
\begin{split}
& G_{\overline{z^{i}}}(\tau,\eta)= \gamma e^{\gamma \tau}g^{i}-\gamma e^{-(\frac{1}{2}-\gamma)\tau}(g^{i})'+\frac{\eta}{2}(g^{i})'+(g^{i})''-(k-i)g^{i}-\left(\frac{3}{2}+i-k\right)e^{-\frac{\tau}{2}}(g^{i})'+\alpha g^{i+1}(\eta+\tau e^{-\frac{\tau}{2}}) \\
& G_{\overline{z^{k}}}(\tau,\eta)= \gamma e^{\gamma \tau}g^{i+1}-\gamma e^{-(\frac{1}{2}-\gamma)\tau}(g^{i+1})'+\frac{\eta}{2}(g^{i+1})'+(g^{i+1})''-\frac{3}{2}e^{-\frac{\tau}{2}}(g^{i+1})' \\
\end{split}
\end{equation}

We can now apply Lemma \ref{differentialinequalities} as follows. 
 
 For any $\varepsilon>0$, we may choose $T$ sufficiently large, and $\omega \in(0,\frac{1}{2}-\gamma)$ so that

$$
\begin{aligned}
& |\overline{z^{i}}_{\tau}+L \overline{z^{i}}+(k-i)\overline{z^{i}}-\alpha \overline{z^{i+1}}(\tau, \eta + \tau e^{-\frac{\tau}{2}})| \leq \varepsilon e^{-\omega(\tau-T)}(|\overline{z^{i}}_{\eta}|+|G_{\overline{z^{i}}}|)  \\
&|\overline{z^{k}}_{\tau}+ L \overline{z^{k}}| \leq  \varepsilon e^{-\omega(\tau-T)}(|\overline{z^{k}}_{\eta}|+|G_{\overline{z^{k}}}|)  \\
& \overline{z^{i}}(\tau,0)=0 \\
\end{aligned}
$$

Lemma \ref{differentialinequalities} then gives us for $\tau>T$, 

$$
\begin{aligned}
& \overline{z^{i}}(\tau, \eta)=\eta \left(e^{-\frac{\eta^2}{4}}\left( \frac{\alpha^{k-i}}{(k-i)! 2\sqrt{\pi}}\int_{0}^{+\infty} \eta \overline{z^{k}}(T,\eta) d \eta +C e^{-(\tau-T)} +\varepsilon R_{1}^{i}(\tau,\eta) \right)+ e^{-\frac{\omega(\tau-T)}{2}} e^{-\frac{\eta^2}{6}}R_2^{i}(\tau,\eta) \right) \\
&\text{ where }  \left\|R^{i}_{1,2}(\tau, \cdot)\right\|_{C^{3}} \leq C'.
\end{aligned}
$$

It was shown in \cite{convergencetothewave} that with a suitable choice of $\overline{z^{k}}(0,\eta)$ (perhaps after increasing it further), we have

$$
\int_{0}^{\infty} \eta \overline{z^{k}}(\tau, \eta) d \eta \geq 1, \text { for all } \tau>0.
$$

Then it follows that that there exists a sequence $\tau_{n} \rightarrow+\infty, C>0$ and functions $\overline{P^{i}}_{\infty}(\eta)$ such that for $1 \leq i \leq k$

$$
C^{-1} \eta e^{-\eta^{2} / 4} \leq \overline{P^{i}}_{\infty}(\eta) \leq C \eta e^{-\eta^{2} / 4}
$$

and

$$
\lim _{n \rightarrow+\infty} e^{\eta^{2} / 8}\left| \overline{z^{i}}\left(\tau_{n}, \eta\right)-\overline{P^{i}}_{\infty}(\eta)\right|=0
$$

uniformly in $\eta$ on the half-line $\eta \geq 0$. The same follows for the function $\overline{p^{i}}(\tau, \eta)$  

$$
\lim _{n \rightarrow+\infty} e^{\eta^{2} / 8}\left|\overline{p^{i}}\left(\tau_{n}, \eta\right)-\overline{P^{i}}_{\infty}(\eta)\right|=0
$$

also uniformly in $\eta$ on the half-line $\eta \geq 0$.

\subsection{Construction of a lower barrier}

Using the upper barrier that we constructed in the previous section for $p^{i}(\tau, \eta)$ we get  

$$
e^{3 \tau / 2-\eta \exp (\tau / 2)} p^{i}(\tau, \eta) \leq C_{\gamma}  e^{-\exp (\frac{\gamma \tau }{2})}
$$
as soon as $\eta \geq e^{-(1 / 2-\gamma) \tau} $ with $\gamma \in(0,1 / 2)$, with $C_{\gamma}>0$ chosen sufficiently large.

So we can choose $C_{\gamma}>0$ large enough to have for $1 \leq i \leq k $ 
\begin{equation} \label{inequalities}
\begin{split}
&e^{3 \tau / 2-\eta \exp (\tau / 2)} (p^{i})^{2}(\tau, \eta) \leq \frac{C_{\gamma}}{2} e^{-\exp (\gamma \tau / 2)} p^{i}(\tau,\eta) \\
& \alpha e^{\tau / 2-\eta \exp (\tau / 2)} p^{i+1}(\tau, \eta +\tau e^{-\frac{\tau}{2}}) p^{i}(\tau,\eta) \leq \frac{C_{\gamma}}{2} e^{-\exp (\gamma \tau / 2)} p^{i}(\tau,\eta) \\
\end{split}
\end{equation}
Thus, a lower barrier can be defined as the solution $\underline{p^{i}}$ of

\begin{equation}
\begin{split}
& \underline{p^{i}}_{\tau}+L \underline{p^{i}}+(k-i)\underline{p^{i}}+\left( \frac{3}{2}+i-k \right)e^{-\tau / 2} \underline{p^{i}}_{\eta}+C_{\gamma} e^{-e^{\frac{\gamma \tau}{2}}} \underline{p^{i}}=\alpha \underline{p^{i+1}}(\tau, \eta+\tau e^{-\frac{\tau}{2}}), \eta>e^{-(\frac{1}{2}-\gamma) \tau}, 1 \leq i \leq k-1 \\
& \underline{p^{k}}_{\tau}+L \underline{p^{k}}+\frac{3}{2} e^{-\tau / 2} \underline{p^{k}}_{\eta}+C_{\gamma} e^{-e^{\frac{ \gamma \tau}{2}}} \underline{p^{k}}=0, \quad \eta>e^{-(\frac{1}{2}-\gamma) \tau} \\
& \underline{p^{i}}\left(\tau, e^{-(\frac{1}{2}-\gamma) \tau}\right)=0 \\
& \underline{p^{k}}\left(\tau, e^{-(\frac{1}{2}-\gamma) \tau}\right)=0
\end{split}
\end{equation}

with initial conditions $\underline{p^{i}}_{0}=p^{i}_{0}$.
\medskip

We will suppose that $\underline{p^{i+1}}$ is a lower barrier for $p^{i+1}$, i.e. that $\underline{p^{i+1}}(\tau,\eta) \leq p^{i+1}(\tau, \eta) $ for all $\tau>0, \eta > e^{-(\frac{1}{2}-\gamma)\tau}$  and show that then this holds for $i$. In \cite{convergencetothewave} it is shown that $\underline{p^{k}}$ is a lower barrier for $p^{k}$, so we will then be able to conclude that $\underline{p^{i}}$ is a lower barrier for $p^{i}$ for $1 \leq i \leq k$.

We set  $$N(w)=w_{\tau}+Lw+(k-i)w+\left( \frac{3}{2}+i-k \right) e^{-\frac{\tau}{2}}w_{\eta}+C_{\gamma} e^{-e^{\frac{\gamma \tau}{2}}} w-\alpha \underline{p}^{i+1}(\tau, \eta+\tau e^{-\frac{\tau}{2}}) $$

we have that $N(\underline{p}^{i})=0$, while

$$N(p^{i})=\alpha p^{i+1}(\tau, \eta+\tau e^{-\frac{\tau}{2}})-\alpha \underline{p}^{i+1}(\tau, \eta + \tau e^{-\frac{\tau}{2}})+ C_{\gamma}e^{-e^{\frac{\gamma \tau}{2}}}p^{i}-e^{\frac{3\tau}{2}-\eta e^{\frac{\tau}{2}}}(p^{i})^{2}-\alpha p^{i+1}(\tau, \eta +\tau e^{-\frac{\tau}{2}}) e^{\frac{\tau}{2}-\eta e^{\frac{\tau}{2}}}p^{i} \geq 0$$ 

where we used that $p^{i+1}(\tau, \eta +\tau e^{-\frac{\tau}{2}}) \geq \underline{p}^{i+1}(\tau, \eta+ \tau e^{-\frac{\tau}{2}})$ and $(\eqref{inequalities})$.

As a result, $\underline{p}^{i}$ is a lower barrier for $p^{i}$ and we must have $p^{i}(\tau,\eta) \geq \underline{p}^{i}(\tau,\eta)$ for $\tau>0, \eta >e^{-(\frac{1}{2}-\gamma)\tau}.$

We make again a change of variables to shift the moving boundary to a fixed boundary at $0$

\begin{equation}
\begin{split}
& \underline{p}^{i}(\tau, \eta)=\underline{z}^{i}\left(\tau, \eta-e^{-(1 / 2-\gamma) \tau}\right) \\
\end{split}
\end{equation}

For the functions $\underline{z}^{i}$ we have the calculations

\begin{equation} \nonumber
\begin{split}
& \underline{p}^{i}_{\tau}=\underline{z}^{i}_{\tau} +(\frac{1}{2}-\gamma)e^{-(\frac{1}{2}-\gamma)\tau}\underline{z}^{i}_{\eta} \\
& \underline{p}^{i+1}(\tau, \eta + \tau e^{-\frac{\tau}{2}})= \underline{z}^{i+1}(\tau, \eta+\tau e^{-\frac{\tau}{2}} - e^{-(\frac{1}{2}-\gamma)\tau}) 
\end{split}
\end{equation}

and translating from $  \eta-e^{-(1 / 2-\gamma) \tau}$ back to $\eta$ we will get a cancellation of $\frac{1}{2} e^{-(\frac{1}{2}-\gamma)\tau}\underline{z}_{\eta}^{i}$, since 
$$
Lz=-z_{\eta \eta}-\frac{1}{2}(\eta+e^{-(1 / 2-\gamma) \tau})z_{\eta}-z
$$
so the equations for $\underline{z}^{i}$ are 

\begin{equation}
\begin{split}
& \underline{z}^{i}_{\tau}+L \underline{z}^{i}+(k-i)\underline{z}^{i}+\left(-\gamma e^{-(\frac{1}{2}-\gamma) \tau}+\left( \frac{3}{2}+i-k \right) e^{-\frac{\tau}{2}}\right) \underline{z}^{i}_{\eta}+C_{\gamma} e^{-e^\frac{\gamma \tau}{2}} \underline{z}^{i}=\alpha \underline{z}^{i+1}(\tau, \eta +\tau e^{-\frac{\tau}{2}}), 1 \leq i \leq k-1  \\
& \underline{z}^{k}_{\tau}+L \underline{z}^{k}+\left(-\gamma e^{-(\frac{1}{2}-\gamma) \tau}+\frac{3}{2} e^{-\frac{\tau}{2}}\right) \underline{z}^{k}_{\eta}+C_{\gamma} e^{-e^\frac{\gamma \tau}{2}} \underline{z}^{k}=0, \quad \eta>0\\
&\underline{z}^{i}(\tau, 0)=0, 1 \leq i \leq k.
\end{split}
\end{equation}

For any $\varepsilon>0$, we may choose $T$ sufficiently large, and $\omega \in(0,\frac{1}{2}-\gamma)$ so that

\begin{equation}
\begin{split}
&\left|\underline{z}^{k}_{\tau}+L \underline{z}^{k}  \right| \leq \varepsilon e^{-\omega(\tau-T)}\left(\left|\underline{z}^{k}_{\eta}\right|+|\underline{z}^{k}|\right), \quad \tau>T, \eta>0, \quad \underline{z}^{k}(\tau, 0)=0 \\
& \left|\underline{z}^{i}_{\tau}+L \underline{z}^{i} +(k-i)\underline{z}^{i} -\alpha \underline{z}^{i+1}(\tau, \eta+\tau e^{-\frac{\tau}{2}}) \right| \leq \varepsilon e^{-\omega(\tau-T)}\left(\left|\underline{z}^{i}_{\eta}\right|+|\underline{z}^{i}|\right), \tau>T, \eta>0, \underline{z}^{i}(\tau, 0)=0, 1 \leq i \leq k-1.\\
\end{split}
\end{equation}

Then, applying the Lemma \ref{differentialinequalities} for $\tau>T$, gives us 

$$
\begin{aligned}
& \underline{z^{i}}(\tau, \eta)=\eta \left(e^{-\frac{\eta^2}{4}}\left( \frac{\alpha^{k-i}}{(k-i)! 2\sqrt{\pi}}\int_{0}^{+\infty} \eta \underline{z^{k}}(T,\eta) d \eta +C (\tau-T)e^{-(\tau-T)} +\varepsilon R_{1}^{i}(\tau,\eta) \right)+ e^{-\frac{\omega(\tau-T)}{2}} e^{-\frac{\eta^2}{6}}R_2^{i}(\tau,\eta) \right) \\
& \text{ where } \left\|R_{1,2}^{i}(\tau, \cdot)\right\|_{C^{3}} \leq C'.
\end{aligned}
$$

It was shown in \cite{convergencetothewave} that there exists a constant $c_0$, which depends on the initial condition, such that

$$
\int_{0}^{\infty} \eta \underline{z}^{k}(\tau, \eta) d \eta \geq c_0>0, \text { for all } \tau>0.
$$

\noindent So as in the study of the upper barrier, we obtain the uniform convergence of (possibly a subsequence of) $\underline{p}^{i}\left(\tau_{n}, \cdot\right)$ on the half-line $\eta \geq e^{-(1 / 2-\gamma) \tau}$ to a function $\underline{P}_{\infty}^{i}(\eta)$ which satisfies

$$
C^{-1} \eta e^{-\eta^{2} / 4} \leq \underline{P}_{\infty}^{i}(\eta) \leq C \eta e^{-\eta^{2} / 4}
$$

and such that

$$
\lim _{n \rightarrow+\infty} e^{\eta^{2} / 8}\left|\underline{p}^{i}\left(\tau_{n}, \eta\right)-\underline{P}_{\infty}^{i}(\eta)\right|=0, \quad \eta>0.
$$

\subsection*{Proof of Lemma $\ref{nonlinearasymptotics}$}

We denote by \( X \) the space of bounded uniformly continuous functions \( u(\eta) \) on \( \mathbb{R}_+ \) such that \( e^{\eta^2/8} u(\eta) \) is also bounded and uniformly continuous. From the convergence of the upper and lower barriers for \( p^{i}(\tau, \eta) \) that solve system \eqref{selfsimilarnonlinear2} we deduce the existence of a sequence \( \tau_n \to +\infty \) for which \( p^{i}(\tau_n, \cdot) \) converges in \( X \) to a limit \( P^{i}_\infty \in X \), satisfying \( P^{i}_\infty \equiv 0 \) on \( \mathbb{R}_- \) and \( P^{i}_\infty(\eta) > 0 \) for all \( \eta > 0 \).

With a bootstrap argument as in \cite{convergencetothewave} we can upgrade this to full convergence as \( \tau \to +\infty \). We consider the translated functions
\[
p^{i}_n(\tau, \eta) := p^{i}(\tau + \tau_n, \eta),
\]
which converge in \( X \), uniformly on compact time intervals, to \( p^{i}_\infty(\tau, \eta) \) that solve the linear system
\[
\begin{aligned}
& (\partial_\tau + L) p^{i}_\infty + (k-i)p^{i}_\infty = \alpha p^{i+1}_\infty, \eta > 0, 1 \leq i \leq k-1 \\
& (\partial_\tau + L) p^{k}_\infty = 0,\eta > 0, \\
& p^{i}_\infty(\tau, 0) = 0, 1 \leq i \leq k\\
& p^{i}_\infty(0, \eta) = P^{i}_\infty(\eta), 1 \leq i \leq k. \\
\end{aligned}
\]

We have that \( p^{i}_\infty(\tau, \eta) \) is the solution to the unperturbed problem \eqref{selfsimilarlinearunperturbed} with initial conditions $P^{i}_{\infty}(\eta)$. Thus by Lemma \ref{linearunperturbedasymptotics3}, we have that $p^{i}_{\infty}$ converges in $ X$ as $\tau \to +\infty$  to 
$
\bar{\psi^{i}}(\eta) := \alpha^{i}_\infty \eta e^{-\eta^2/4},
$

where $\alpha^{i}_\infty=\frac{\alpha^{k-i}}{(k-i)! 2\sqrt{\pi}}\int_{0}^{+\infty} \eta P^{k}_{\infty}(0,\eta) d \eta. $

For any \( \varepsilon > 0 \), there exists \( T_\varepsilon > 0 \) such that
\[
\left| p^{i}_\infty(\tau, \eta) - \alpha^{i}_\infty \eta e^{-\eta^2/4} \right| \leq \varepsilon \eta e^{-\eta^2/8}, \quad \text{for all } \tau > T_\varepsilon, \ \eta > 0.
\]

Fix such a \( T_\varepsilon \). Then for all \( n > N_\varepsilon \) (for some sufficiently large \( N_\varepsilon \)), we have
\[
\left| p^{i}\left(T_\varepsilon + \tau_n, \eta + e^{-(1/2 - \gamma) T_\varepsilon} \right) - p^{i}_\infty(T_\varepsilon, \eta) \right| \leq \varepsilon \eta e^{-\eta^2/8}.
\]
It follows that for $1 \leq i \leq k$
\begin{equation} \label{boundinitialconditions}
\alpha^{i}_\infty \eta e^{-\eta^2/4} - 2\varepsilon \eta e^{-\eta^2/8}
\leq p^{i}\left(\tau_{N_\varepsilon} + T_\varepsilon, \eta + e^{-(1/2 - \gamma) T_\varepsilon} \right)
\leq \alpha^{i}_\infty \eta e^{-\eta^2/4} + 2\varepsilon \eta e^{-\eta^2/8}.
\end{equation}

We can now construct upper and lower barriers for the system satisfied by 

$
p^{i}\left(\tau + \tau_{N_\varepsilon} + T_\varepsilon, \eta + e^{-(1/2 - \gamma) T_\varepsilon} \right), 1 \leq i \leq k
$ as we did previously and using the bound \eqref{boundinitialconditions} for the initial conditions. Applying Lemma~\ref{differentialinequalities} to these barriers shows that any limit point \( \phi^{i} \in X \) of \( p^{i}(\tau, \cdot) \) as \( \tau \to +\infty \) satisfies
\[
\left( \alpha^{i}_\infty - C\varepsilon \right) \eta e^{-\eta^2/4}
\leq \phi^{i}(\eta)
\leq \left( \alpha^{i}_\infty + C\varepsilon \right) \eta e^{-\eta^2/4}.
\]
Since \( \varepsilon > 0 \) is arbitrary, we conclude that
\[
p^{i}(\tau, \eta) \to \bar{\psi^{i}}(\eta) = \alpha^{i}_\infty \eta e^{-\eta^2/4} \quad \text{in } X \text{ as } \tau \to +\infty.
\]

Finally, applying Lemma~\ref{differentialinequalities} to the upper and lower barriers of $p^{i}(\tau,\eta)$, constructed from any time $\tau > 0$,  completes the proof of Lemma~\ref{nonlinearasymptotics}.

\subsection{Proof of Theorem \ref{convergencetothewavecascading}}

Having proven the stabilization Lemma \ref{stabilization}, we now proceed with the proof of Theorem \ref{convergencetothewavecascading}. Fix $\gamma \in(0,\frac{1}{3})$, and take any $\varepsilon>0$. We go back to $\widetilde{v^{i}}$, the solution of the original nonlinear problem in the logarithmic frames \eqref{logmovingframecascading}.
 Following Lemma $\ref{stabilization}$ there exist constants $\alpha^{i}_{\infty}>0$ and $T_{\varepsilon}$ so that for all $t>T_{\varepsilon}$ we have

 \begin{equation}
\begin{split}
&\left| \widetilde{v^{i}}\left(t, x_{\gamma}\right)-\alpha^{i}_{\infty} x_{\gamma} e^{-x_{\gamma}} e^{-\frac{x_{\gamma}^{2}}{4 t}}\right| \leq \varepsilon x_{\gamma} e^{-x_{\gamma}} e^{-\frac{x_{\gamma}^{2}}{6 t}} \\
\end{split}
\end{equation}

with $x_{\gamma}=t^{\gamma}$.

We consider the solution $\widetilde{v^{\alpha^{i}_{\infty}+\varepsilon}}$ for $t > T_{\varepsilon}$ to the following moving boundary problem  with $x \leq x_{\gamma}(t)$
\small
\begin{equation} \label{utheta}
\begin{split}
&(\widetilde{v^{\alpha^{i}_{\infty}+\varepsilon}})_{t}+\left(-2+\frac{\left( \frac{3}{2}+i-k\right)}{t}\right)(\widetilde{v^{\alpha^{i}_{\infty}+\varepsilon}})_{x} = (\widetilde{v^{\alpha^{i}_{\infty}+\varepsilon}})_{xx}+\widetilde{v^{\alpha^{i}_{\infty}+\varepsilon}}-(\widetilde{v^{\alpha^{i}_{\infty}+\varepsilon}})^{2}+\alpha \widetilde{v^{i+1}}(t,x+\ln(t)), 1 \leq i \leq k-1 \\
& (\widetilde{v^{\alpha^{k}_{\infty}+\varepsilon}})_{t}+ \left( -2+\frac{\frac{3}{2}}{t} \right)(\widetilde{v^{\alpha^{k}_{\infty}+\varepsilon}})_{x}= (\widetilde{v^{\alpha^{k}_{\infty}+\varepsilon}})_{xx}+\widetilde{v^{\alpha^{k}_{\infty}+\varepsilon}}-(\widetilde{v^{\alpha^{k}_{\infty}+\varepsilon}})^{2}, \quad x \leq x_{\gamma}(t)=t^{\gamma}\\
& \widetilde{v^{\alpha^{i}_{\infty}+\varepsilon}}\left(t, t^{\gamma}\right)=(\alpha^{i}_{\infty}+\varepsilon) t^{\gamma} e^{-t^{\gamma}-\frac{t^{2 \gamma-1}}{6}} \\
\end{split}
\end{equation}
\normalsize
with initial conditions $\widetilde{v^{\alpha^{i}_{\infty}+\varepsilon}}\left(T_{\varepsilon}, x\right)=\widetilde{v^{i}}\left(T_{\varepsilon}, x\right)$.

We have that for all $1 \leq i \leq k$, $\widetilde{v^{\alpha^{i}_{\infty}+\varepsilon}}(t,x) \geq \widetilde{v^{i}}(t,x)$ for all $x \leq t^{\gamma}, t > T_{\varepsilon}.$

The corresponding $z^{\alpha^{i}_{\infty}+\varepsilon}(t, x)=e^{x} \widetilde{v^{\alpha^{i}_{\infty}+\varepsilon}}(t, x)$ solve 
$$
\begin{aligned}
&z^{\alpha^{i}_{\infty}+\varepsilon}_{t}+\frac{\left(\frac{3}{2}+i-k \right)}{t} (z^{\alpha^{i}_{\infty}+\varepsilon}_{x}-z^{\alpha^{i}_{\infty}+\varepsilon} )=z^{\alpha^{i}_{\infty}+\varepsilon}_{x x}-e^{-x}(z^{\alpha^{i}_{\infty}+\varepsilon})^2+\frac{\alpha }{t }z^{i+1}(t,x+\ln t)  , 1 \leq i \leq k-1, x \leq t^{\gamma} \\
&z^{\alpha^{k}_{\infty}+\varepsilon}_{t}+\frac{3}{2 t}(z^{\alpha^{k}_{\infty}+\varepsilon}_{x}-z^{\alpha^{k}_{\infty}+\varepsilon})=z^{\alpha^{k}_{\infty}+\varepsilon}_{x x}-e^{-x}(z^{\alpha^{k}_{\infty}+\varepsilon})^2, \quad x \leq t^{\gamma} \\
& z^{\alpha^{i}_{\infty}+\varepsilon}\left(t, t^{\gamma}\right)= (\alpha^{i}_{\infty}+\varepsilon) t^{\gamma} e^{-\frac{t^{2 \gamma-1}}{6}} \\
\end{aligned}
$$

where $z^{i}$ solve the system \eqref{z_system}.
\noindent We choose the translation of the wave that exactly matches the behavior of $\widetilde{v^{\alpha^{i}_{\infty}+\varepsilon}}$ at the boundary $x=t^{\gamma}$.

We set

$$
\psi^{i}(t, x)=e^{x} U_{c_{*}}(x+\zeta^{i}(t))
$$

where $U_{c_{*}}(x)$ is the minimal speed traveling wave. We look for a function $\zeta^{i}(t)$ such that

$$
\psi^{i}\left(t, t^{\gamma}\right)=z^{\alpha^{i}_{\infty}+\varepsilon}\left(t, t^{\gamma}\right)
$$

We recall that the traveling wave $U_{c_{*}}(\xi)$ has the asymptotics

$$
U_{c_{*}}(\xi)=(\xi+k) e^{-\xi}+O\left(e^{-\left(1+\omega_{0}\right) \xi}\right), \quad \xi \rightarrow+\infty
$$

Based on this expansion, we should have, with some $\omega_{0}>0$ :

$$
e^{-\zeta^{i}(t)}\left(t^{\gamma}+\zeta^{i}(t)+k\right)+O\left(e^{-\omega_{0} t^{\gamma}}\right)=(\alpha^{i}_{\infty}+\varepsilon) t^{\gamma} e^{-\frac{1}{c t^{1-2 \gamma}}}
$$

which implies, for $\gamma \in(0, \frac{1}{2})$:

$$
e^{-\zeta^{i}(t)}\left(1+\zeta^{i}(t)t^{-\gamma}+k t^{-\gamma}\right)+O\left( t^{-\gamma} e^{-\omega_{0} t^{\gamma}}\right)=(\alpha^{i}_{\infty}+\varepsilon) e^{-\frac{1}{ c t^{1-2 \gamma}}}
$$

Taking the logarithm on both sides and keeping the leading order terms gives us
$$
-\zeta^{i}(t)+ \ln \left(1+\zeta^{i}(t)t^{-\gamma}+k t^{-\gamma}\right)
= \ln (\alpha^{i}_{\infty}+\varepsilon) 
$$

and using the approximation $ \ln (1 +z) \sim z$ we get 

$$
\zeta^{i}(t)(1-t^{-\gamma})=k t^{-\gamma} - \ln (\alpha^{i}_{\infty}+\varepsilon) 
$$

so that

$$
\zeta^{i}(t)=\left(\frac{k}{t^{\gamma}} -  \ln (\alpha^{i}_{\infty}+\varepsilon) \right)\left( 1+\frac{1}{t^{\gamma}}\right)
$$

and thus

$$
\zeta^{i}(t)=-\ln (\alpha^{i}_{\infty}+\varepsilon) -(\ln(\alpha^{i}_{\infty}+\varepsilon) -k) t^{-\gamma}+O\left(t^{-2 \gamma}\right)
$$

As a result, we also have  

$$
|(\zeta^{i})^{\prime}(t)| \leq \frac{C}{t^{1+\gamma}}
$$

We have set $\psi^{i}(t,x)=e^{x}U_{c_{*}}(x+\zeta^{i}(t))$, so 

$$
\begin{aligned}
&\psi^{i}_{t}=e^{x}U_{c_{*}}'(x+\zeta^{i}(t))(\zeta^{i}) '(t) \\
&\psi^{i}_{x}=e^{x}U_{c_{*}}(x+\zeta^{i}(t))+e^{x}U_{c_{*}}'(x+\zeta^{i}(t))=\psi^{i}(t,x) + e^{x}U_{c_{*}}'(x+\zeta^{i}(t)) \\
&\psi^{i}_{xx}=\psi^{i}(t,x)+2e^{x}U_{c_{*}}'(x+\zeta^{i}(t))+e^{x}U_{c_{*}}''(x+\zeta^{i}(t))
\end{aligned}
$$

and thus

$$
\psi^{i}_{t}=( \psi^{i}_{x}-\psi^{i})(\zeta^{i})'(t)
$$

The traveling wave $U_{c_{*}}$ satisfies the ODE
$$
2U_{c_{*}}'+U_{c_{*}}''+U_{c_{*}}-U_{c_{*}}^2=0
$$

As a result, we get the following equation for the function $\psi^{i}$ 

\small
$$
\begin{aligned}
&\psi^{i}_{t}-\psi^{i}_{x x}+\frac{\frac{3}{2}+i-k}{t}\left(\psi^{i}_{x}-\psi^{i}\right)+e^{-x} (\psi^{i})^{2}=(\psi^{i}_{x} -\psi^{i})(\zeta^{i})'-e^{x}(U_{c_{*}}+2U_{c_{*}}'+U_{c_{*}}'')+\frac{\frac{3}{2}+i-k}{ t}\left(\psi^{i}_{x}-\psi^{i}\right)+e^{x}U_{c_{*}}^2=\\
&= (\psi^{i}_{x} -\psi^{i})(\zeta^{i})'-e^{x}(U_{c_{*}}+2U_{c_{*}}'+U_{c_{*}}''-U_{c_{*}}^2)+\frac{\frac{3}{2}+i-k}{t}\left(\psi^{i}_{x}-\psi^{i}\right)=
(\psi^{i}_{x} -\psi^{i})(\zeta^{i})'+\frac{\frac{3}{2}+i-k}{ t}\left(\psi^{i}_{x}-\psi^{i}\right)= \\
&=O\left(\frac{x}{t}\right)=O\left(t^{-1+\gamma}\right),|x|<t^{\gamma}
\end{aligned}
$$

while $z^{\alpha^{i}_{\infty}+\varepsilon}(t,x)$ satisfies

$$
\begin{aligned}
&z^{\alpha^{i}_{\infty}+\varepsilon}_{t}+\frac{\left(\frac{3}{2}+i-k \right)}{t} (z^{\alpha^{i}_{\infty}+\varepsilon}_{x}-z^{\alpha^{i}_{\infty}+\varepsilon} )=z^{\alpha^{i}_{\infty}+\varepsilon}_{x x}-e^{-x}(z^{\alpha^{i}_{\infty}+\varepsilon})^2+\frac{\alpha }{t }z^{i+1}(t,x+\ln t) , 1 \leq i \leq k-1, x \leq t^{\gamma} \\
&z^{\alpha^{k}_{\infty}+\varepsilon}_{t}+\frac{3}{2 t}(z^{\alpha^{k}_{\infty}+\varepsilon}_{x}-z^{\alpha^{k}_{\infty}+\varepsilon})=z^{\alpha^{k}_{\infty}+\varepsilon}_{x x}-e^{-x}(z^{\alpha^{k}_{\infty}+\varepsilon})^2, \quad x \leq t^{\gamma} \\
& z^{\alpha^{i}_{\infty}+\varepsilon}\left(t, t^{\gamma}\right)= (\alpha^{i}_{\infty}+\varepsilon) t^{\gamma} e^{-\frac{t^{2 \gamma-1}}{6}} \\
&z^{\alpha^{i}_{\infty}+\varepsilon} \left(t, -t^{\gamma}\right)=O \left(e^{-t^{\gamma}} \right) \\
\end{aligned}
$$

\noindent In addition, $\psi^{i}(t,x)$  is exponentially small for $x<-t^{\gamma}$ because of the exponential factor.
\newline 
In particular $\psi^{i}(t, -t^{\gamma})=e^{-t^{\gamma}}U_{c_{*}}(-t^{\gamma}+\zeta^{i}(t))=O\left(e^{-t^{\gamma}}\right)$. Hence, the difference $s^{i}(t, x)=z^{\alpha^{i}_{\infty}+\varepsilon}(t, x)-\psi^{i}(t, x)$ satisfies for $|x| \leq t^{\gamma}$

$$
\begin{aligned}
&s^{i}_{t}-s^{i}_{x x}+\frac{\left( \frac{3}{2}+i-k \right)}{t}\left(s^{i}_{x}-s^{i}\right)+e^{-x}(\psi^{i}+z^{\alpha^{i}_{\infty}+\varepsilon}) s^{i}=\frac{\alpha }{t }z^{i+1}(t,x+\ln t) -(\psi^{i}_{x} -\psi^{i})\left((\zeta^{i})'+\frac{\frac{3}{2}+i-k}{ t}\right), 1 \leq i \leq k-1\\
&s^{k}_{t}-s^{k}_{x x}+\frac{3}{2t}\left(s^{k}_{x}-s^{k}\right)+e^{-x}(\psi^{k}+z^{\alpha^{k}_{\infty}+\varepsilon}) s^{k}=-(\psi^{k}_{x} -\psi^{k})\left((\zeta^{k})'+\frac{3}{ 2t}\right)=O\left(t^{-1+\gamma}\right) \\
& s^{i}\left(t,-t^{\gamma}\right)=O\left(e^{-t^{\gamma}}\right), \quad s^{i}\left(t, t^{\gamma}\right)=0, 1 \leq i \leq k.
\end{aligned}
$$
\normalsize
We will show the following proposition.
\begin{proposition} \label{sconvergence}
    
For any $\gamma \in(0,1 / 3)$, we have for $s^{i}(t,x)=z^{\alpha^{i}_{\infty}+\varepsilon}(t,x)-\psi^{i}(t,x)$, $1 \leq i \leq k$, 

$$
\lim _{t \rightarrow+\infty} \sup _{|x| \leq t^{\gamma}}|s^{i}(t, x)|=0
$$
\end{proposition}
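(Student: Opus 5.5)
We will follow the strategy of the single-equation case in \cite{convergencetothewave} and prove the statement by a downward induction on $i$. The case $i=k$ is exactly the scalar statement proven there, since the equation for $s^{k}$ has no coupling. For $i\le k-1$ we treat the equation for $s^{i}$ on the interval $|x|\le t^{\gamma}$ as a linear parabolic problem: a zeroth-order coefficient $e^{-x}(\psi^{i}+z^{\alpha^{i}_{\infty}+\varepsilon})\ge 0$, a drift of size $O(1/t)$, and boundary data that are $O(e^{-t^{\gamma}})$ at $x=-t^{\gamma}$ and $0$ at $x=t^{\gamma}$. We then bound $s^{i}$ by a maximum-principle comparison with explicit barriers. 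The key fact is that the forcing on the right-hand side is small in $\sup$ norm on $|x|\le t^{\gamma}$.

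First, we estimate the forcing. The term $(\psi^{i}_{x}-\psi^{i})\big((\zeta^{i})'+\frac{3/2+i-k}{t}\big)$ is $O(t^{-1+\gamma})$, using $|(\zeta^{i})'|\le Ct^{-1-\gamma}$ and $\psi^{i}_x-\psi^i=e^xU_{c_*}'=O(1+|x|)$. For the coupling term $\frac{\alpha}{t}z^{i+1}(t,x+\ln t)$, we use the upper bound from Theorem \ref{cascadingupperbound}, or more precisely the supersolution estimate $\widetilde{v^{i+1}}(t,y)\le \rho (y+1)e^{-y}$ in the frame $\xi_{i+1}$. This gives $z^{i+1}(t,y)\le C(1+|y|)$ for $y\ge 0$, and $z^{i+1}(t,y)\le e^{y}$ for $y<0$. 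Hence on $|x|\le t^\gamma$,
\[
\Big|\frac{\alpha}{t}z^{i+1}(t,x+\ln t)\Big|\le C\,\frac{t^{\gamma}+\ln t}{t}=O(t^{-1+\gamma}).
\]
The total forcing is therefore $F^{i}=O(t^{-1+\gamma})$ uniformly on $|x|\le t^\gamma$.

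Second, we build barriers on the moving interval. The naive bound $\sup|s^i|\lesssim \int^t \|F^i\|\,ds$ is useless, because the zeroth-order term $e^{-x}(\cdots)$ only gives damping for $x\lesssim 1$. Instead we exploit the diffusion across the interval of length $2t^\gamma$. Consider the supersolution
\[
\bar s(t,x)=A\,t^{-1+3\gamma}\Big(1-\frac{x^2}{4t^{2\gamma}}\Big)+Be^{-t^{\gamma}/2}+\delta_0 ,
\]
where $\delta_0>0$ is arbitrary. For $\gamma<1/3$, $-\bar s_{xx}=\frac{A}{2}t^{-1+\gamma}$ dominates $|F^i|=O(t^{-1+\gamma})$ once $A$ is large. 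The time derivative and drift terms are of lower order: $\partial_t(t^{-1+3\gamma})\sim t^{-2+3\gamma}$, and the drift contributes $\frac1t\cdot t^{-1+2\gamma}$. We keep the constant $\delta_0$ because the term $-\frac{3/2+i-k}{t}\bar s$ is nonnegative for $i\le k-2$; for $i=k-1$ it has the bad sign but is only $O(\bar s/t)$. That case is handled by replacing $\delta_0$ with $\delta_0 t^{1/2}$-type corrections on a finite time window, or more simply by using that $\int^\infty t^{-1}\bar s$ contributes $O(t^{-1+3\gamma}\ln t)$. The boundary values exceed those of $s^i$, and at the initial time $T_\varepsilon$ we take $\delta_0\ge \sup|s^i(T_\varepsilon,\cdot)|$, enlarging $T$ if needed. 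The same construction with $-\bar s$ gives a subsolution. Note that $\bar s\to\delta_0$, not $0$.

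Third, we remove $\delta_0$. We expect this to be the main obstacle, because the initial discrepancy $s^i(T_\varepsilon,\cdot)$ is not small, and the interval grows so diffusion alone need not kill it. As in \cite{convergencetothewave}, we use the absorption $e^{-x}(\psi^i+z)\ge c>0$ on a fixed region $x\in[-L,0]$, together with the Dirichlet-type decay at $x=-t^\gamma$ where $s^i=O(e^{-t^\gamma})$. To do this, we compare the homogeneous part with $\delta_0\,\phi(t,x)$, where $\phi$ solves the homogeneous linear problem with data $1$. We then show $\sup_{|x|\le t^\gamma}\phi(t,x)\to0$ by a heat-kernel argument: on the scale $t^{\gamma}$ with $\gamma<1/2$, a Brownian path started in $|x|\le t^\gamma$ exits or enters the absorbing region with probability tending to one. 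The exit happens before the boundary value $0$ at $x=t^\gamma$ or the absorption is felt, since the exit time scale is $t^{2\gamma}\ll t$. Combining the inhomogeneous barrier of size $O(t^{-1+3\gamma})$ with $\delta_0\phi\to0$, restarting the comparison at later times to handle the growing interval, yields $\sup_{|x|\le t^\gamma}|s^i|\to0$. This completes the induction.
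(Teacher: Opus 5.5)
Your argument is correct in outline, but it is organized differently from the paper's.

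\textbf{The paper's route.} The paper uses a single supersolution, $A\,t^{-\lambda}\cos\big(x/t^{\gamma+\delta}\big)$. On $|x|\le t^{\gamma}$ it is comparable to $t^{-\lambda}$. Its Dirichlet-eigenvalue gain $t^{-2\gamma-2\delta}$ dominates $\lambda/t$, the $O(1/t)$ drift terms, and the forcing $O(t^{-1+\gamma'})$; this is where $\gamma<1/3$ enters. So one barrier handles initial data, boundary data and forcing at once, and it decays like $t^{-\lambda}$.

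\textbf{Your route.} You split $s^{i}$ by linearity into two pieces. The forced part is controlled by the parabola $A t^{-1+3\gamma}\big(1-x^{2}/(4t^{2\gamma})\big)+Be^{-t^{\gamma}/2}$. This gives an explicit rate and makes the threshold $\gamma<1/3$ transparent as forcing times width squared. The homogeneous part is bounded by $\|s^{i}(T_\varepsilon,\cdot)\|_{\infty}\,\phi$.

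Your bound on the coupling term, via the a priori estimate $\widetilde{v^{i+1}}(t,y)\le\rho(y+1)e^{-y}$ from the proof of Theorem \ref{cascadingupperbound}, is cleaner than the paper's. The paper instead invokes the inductive hypothesis at the larger scale $t^{\gamma'}$, together with an implicit comparison $z^{i+1}\le z^{\alpha^{i+1}_\infty+\varepsilon}$. With your bound, the case $i$ no longer depends on the case $i+1$.

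\textbf{The step that needs repair.} You leave $\sup_{|x|\le t^{\gamma}}\phi(t,x)\to0$ at the level of heuristics, and the reasoning you give there is off.
\begin{itemize}
\item Your remark that diffusion alone need not kill the initial discrepancy is wrong.
\item The absorption $e^{-x}(\psi^{i}+z^{\alpha^{i}_\infty+\varepsilon})$ on $[-L,0]$ is not needed.
\item What makes $\phi$ decay is diffusion with zero Dirichlet data at \emph{both} ends $x=\pm t^{\gamma}$. The principal eigenvalue is about $\pi^{2}/(4t^{2\gamma})$, and $\int^{t}s^{-2\gamma}\,ds\to\infty$ for $\gamma<1/2$.
\item This decay beats the at most $t^{1/2}$ growth caused by the term $-\frac{3/2+i-k}{t}\phi$ when $i=k-1$.
\end{itemize}
The rigorous version of this is exactly the paper's barrier, $\phi\le C\,t^{-\lambda}\cos\big(x/t^{\gamma+\delta}\big)$. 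With it, the constant $\delta_0$ in your Step 2 and the restarting of the comparison become unnecessary.

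Your concern about the sign of $-\frac{3/2+i-k}{t}\bar s$ for $i=k-1$ is also moot. Since $\delta_0/(2t)\le\frac{A}{4}t^{-1+\gamma}$ once $A\ge 2\delta_0$, the diffusion term absorbs it directly.
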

\begin{proof}
Proposition 4.3 from \cite{convergencetothewave} shows this for $s^{k}(t,x)=z^{\alpha^{k}_{\infty}+\varepsilon}(t,x)-\psi^{k}(t,x)$. We suppose that the statement holds for $i+1$ and show that then it holds for $i$, in order to conclude that it holds for all $1 \leq i \leq k$.

We fix $0<\gamma < \frac{1}{3}$ and take $\gamma'$ with $\gamma < \gamma' < \frac{1}{3}$ such that $t^{\gamma}+\ln(t) < t^{\gamma'}$ for all $t$ large enough.
We have that then for all $|y| \leq t^{\gamma'}$, the corresponding solution to the moving boundary problem $z^{\alpha_{\infty}^{i}+\varepsilon}$ satisfies

$$
z^{\alpha_{\infty}^{i+1}+\varepsilon}(t,y)=\psi^{i+1}(t,y)+o(1), \text{ as } t \rightarrow \infty
$$

In particular for all $|x| < t^{\gamma}$ we have $t^{\gamma}+\ln(t) < t^{\gamma'}$ for $t$ large enough, so 
$$\frac{\alpha }{t }z^{i+1}(t,x+\ln t) =O\left(t^{-1+\gamma'}\right) , \text{ for }\quad|x|<t^{\gamma} $$

As a result we get for such $t> T_{\varepsilon}$ (potentially taking it larger so that the above hold),

\begin{equation} \label{sequation}
\begin{split}
&s^{i}_{t}-s^{i}_{x x}+\frac{\left( \frac{3}{2}+i-k \right)}{t}\left(s^{i}_{x}-s^{i}\right)+e^{-x}(\psi^{i}+z^{\alpha^{i}_{\infty}+\varepsilon}) s^{i}=\frac{\alpha }{t }z^{i+1}(t,x+\ln t)\\
&-(\psi^{i}_{x} -\psi^{i})\left((\zeta^{i})'+\frac{\frac{3}{2}+i-k}{ t}\right)=O\left(t^{-1+\gamma'}\right) , \quad|x| \leq t^{\gamma} \\
& s^{i}\left(t,-t^{\gamma}\right)=O\left(e^{-t^{\gamma}}\right), \quad s^{i}\left(t, t^{\gamma}\right)=0, 1 \leq i \leq k.
\end{split}
\end{equation}

 Following the strategy of \cite{convergencetothewave} which is due to the principal Dirichlet eigenvalue for the Laplacian in $\left(-t^{\gamma}, t^{\gamma}\right)$ being $\frac{\pi^{2}}{4 t^{2 \gamma}}$, we will have a supersolution of the form

$$
\overline{s^{i}}(t, x)=t^{-\lambda} \cos \left(\frac{x}{t^{\gamma+\delta}}\right)
$$

\noindent where $\lambda$ and $\delta$ will be chosen to be small enough.
\newline 
We can choose $\delta$ small enough so that there exist $c_1>0,c_2>0$ such that $c_1 t^{-\lambda} \leq \overline{s^{i}}(t, x) \leq c_2 t^{-\lambda}$ for all $|x| \leq t^{\gamma}$. We also have that

$$
\begin{aligned}
& \overline{s^{i}}_{x}=-t^{-\lambda-\gamma-\delta}\sin \left(\frac{x}{t^{\gamma+\delta}}\right),-\overline{s^{i}}_{x x}=t^{-(2 \gamma+2 \delta)} \overline{s^{i}}(t, x) \\
& \overline{s^{i}}_{t}=-\frac{\lambda}{t} \overline{s^{i}}+t^{-\lambda}\sin\left(\frac{x}{t^{\gamma+\delta}}\right)x t^{-\gamma-\delta-1}(\gamma+\delta)
\end{aligned}
$$

We set 
$g(t,x)=t^{-\lambda}\sin\left(\frac{x}{t^{\gamma+\delta}}\right)x t^{-\gamma-\delta-1}(\gamma+\delta)$
and have that
$$|g(t,x)| \leq C |x| t^{-\lambda-\gamma -\delta-1 } 
\leq Ct^{-\delta-1} \overline{s^{i}}(t,x) \text{ for all } |x| \leq t^{\gamma}.$$

We get

\begin{equation}
\begin{split}
&|\frac{\left(\frac{3}{2}+i-k\right)}{ t}\left(\overline{s^{i}}_{x}-\overline{s^{i}}\right)(t, x)| =|\frac{\left(\frac{3}{2}+i-k\right)}{t} \left( -t^{-\lambda-\gamma-\delta}\sin (\frac{x}{t^{\gamma+\delta}})-\overline{s^{i}}(t,x)\right)| \\
& \leq \frac{|\frac{3}{2}+i-k|}{t}| t^{-\lambda-\gamma-\delta}\sin (\frac{x}{t^{\gamma+\delta}})+\overline{s^{i}}(t,x)| \\
& \leq \frac{|\frac{3}{2}+i-k|}{t} \left(| t^{-\lambda-\gamma-\delta}\sin (\frac{x}{t^{\gamma+\delta}})|+|\overline{s^{i}}(t,x)| \right) \leq \frac{|\frac{3}{2}+i-k|}{t} \left( t^{-\lambda-\gamma-\delta}+|\overline{s^{i}}(t,x)| \right) \\
&\leq \frac{|\frac{3}{2}+i-k|}{t} \overline{s^{i}}(t,x) \left( \frac{1}{c_1}t^{-\gamma -\delta} +1 \right) \leq \frac{C_{i}}{t} \overline{s^{i}}(t,x)
\end{split}
\end{equation}
for some constant $C_{i}>0$.
As a result we get that there exists $q_{i}>0$ such that, for $t$ large enough:
\small
\begin{equation}
\begin{split}
& \left(\partial_{t}-\partial_{x x}+\frac{\frac{3}{2}+i-k}{t}\left(\partial_{x}-1\right)\right) \overline{s^{i}}(t, x) \geq \left(-\lambda t^{-1}-Ct^{-\delta-1}+t^{-(2\gamma+2\delta)}-C_{i} t^{-1}\right) \overline{s^{i}}(t,x)  \\
& \geq q_{i} t^{-(2 \gamma+2 \delta)} \overline{s^{i}}(t,x) \geq q_{i} c_1 t^{-(2 \gamma+2 \delta +\lambda)} \geq O\left(\frac{1}{t^{1-\gamma'}}\right)
\end{split}
\end{equation}
\normalsize

\noindent as long as $\delta$ and $\lambda$ are small enough, since $\gamma,\gamma' \in(0,1 / 3)$. 
\newline 
Since the right-hand side does not depend on $\overline{s^{i}}$, the inequality extends to all $t \geq T_{\varepsilon}$ by replacing $\overline{s^{i}}$ by $A \overline{s^{i}}$ with $A$ large enough. As a result, we have that $\overline{s^{i}}$ is a supersolution to $\eqref{sequation}$ and thus $s^{i}(t,x) \leq \overline{s^{i}}(t,x) $ for all $|x| \leq t^{\gamma}$ and we get $\lim_{t \rightarrow \infty}\sup_{ |x| \leq t^{\gamma}}|s^{i}(t,x)|=0.$ We showed the statement for $i$, so we can conclude that this holds for $1 \leq i \leq k$ by closing the induction. 

\end{proof}

Similarly, we can consider the solution $\widetilde{v^{\alpha^{i}_{\infty}-\varepsilon}}$ for $t > T_{\varepsilon}$ to the moving boundary problem where we drop the coupling term, $x \leq x_{\gamma}(t)=t^{\gamma}$
\small
\begin{equation} \label{utheta}
\begin{split}
&(\widetilde{v^{\alpha^{i}_{\infty}-\varepsilon}})_{t}+\left(-2+\frac{\left( \frac{3}{2}+i-k\right)}{t}\right)(\widetilde{v^{\alpha^{i}_{\infty}-\varepsilon}})_{x} = (\widetilde{v^{\alpha^{i}_{\infty}-\varepsilon}})_{xx}+\widetilde{v^{\alpha^{i}_{\infty}-\varepsilon}}-(\widetilde{v^{\alpha^{i}_{\infty}-\varepsilon}})^{2}, x \leq x_{\gamma}(t), 1 \leq i \leq k \\
& \widetilde{v^{\alpha^{i}_{\infty}-\varepsilon}}\left(t, t^{\gamma}\right)=(\alpha^{i}_{\infty}-\varepsilon) t^{\gamma} e^{-t^{\gamma}-\frac{t^{2 \gamma-1}}{4}} \\
\end{split}
\end{equation}
\normalsize
with initial conditions $\widetilde{v^{\alpha^{i}_{\infty}-\varepsilon}}\left(T_{\varepsilon}, x\right)=\widetilde{v^{i}}\left(T_{\varepsilon}, x\right)$.
We have that for all $1 \leq i \leq k$, $\widetilde{v^{\alpha^{i}_{\infty}-\varepsilon}}(t,x) \leq \widetilde{v^{i}}(t,x)$ for all $x \leq t^{\gamma}, t > T_{\varepsilon}.$ 

The corresponding $z^{\alpha^{i}_{\infty}-\varepsilon}(t, x)=e^{x} \widetilde{v^{\alpha^{i}_{\infty}-\varepsilon}}(t, x)$ solve 
$$
\begin{aligned}
&z^{\alpha^{i}_{\infty}-\varepsilon}_{t}+\frac{\left(\frac{3}{2}+i-k \right)}{t} (z^{\alpha^{i}_{\infty}-\varepsilon}_{x}-z^{\alpha^{i}_{\infty}-\varepsilon} )=z^{\alpha^{i}_{\infty}-\varepsilon}_{x x}-e^{-x}(z^{\alpha^{i}_{\infty}-\varepsilon})^2 , x \leq t^{\gamma},  1 \leq i \leq k, \\
& z^{\alpha^{i}_{\infty}-\varepsilon}\left(t, t^{\gamma}\right)= (\alpha^{i}_{\infty}-\varepsilon) t^{\gamma} e^{-\frac{t^{2 \gamma-1}}{4}} \\
\end{aligned}
$$
As before we will consider $$
\psi^{i}(t, x)=e^{x} U_{c_{*}}(x+\zeta^{i}(t))
$$

where $U_{c_{*}}(x)$ is the minimal speed traveling wave profile and look for a function $\zeta^{i}(t)$ such that

$$
\psi^{i}\left(t, t^{\gamma}\right)=z^{\alpha^{i}_{\infty}-\varepsilon}\left(t, t^{\gamma}\right)
$$

Proceeding as we did earlier, we get $$
\zeta^{i}(t)=\left(\frac{k}{t^{\gamma}} -  \ln (\alpha^{i}_{\infty}-\varepsilon) \right)\left( 1+\frac{1}{t^{\gamma}}\right)
$$
A direct modification of Proposition 4.3 from \cite{convergencetothewave} gives us that for any $\gamma \in(0,1 / 3)$, we have that for $s^{i}(t,x)=z^{\alpha^{i}_{\infty}-\varepsilon}(t,x)-\psi^{i}(t,x)$, $1 \leq i \leq k$

$$
\lim _{t \rightarrow+\infty} \sup _{|x| \leq t^{\gamma}}|s^{i}(t, x)|=0.
$$

Thus, for any $t\geq T_{\varepsilon}$ we have 

$$
\widetilde{v^{\alpha^{i}_{\infty}-\varepsilon}}(t,x) \leq \widetilde{v^{i}}(t, x) \leq \widetilde{v^{\alpha^{i}_{\infty}+\varepsilon}}(t,x) \text{ 
for all } x \leq t^{\gamma}$$.

and moreover, 

$$
e^{x}\left[\widetilde{v^{\alpha^{i}_{\infty} \pm \varepsilon}}(t,x)-U_{c_{*}}\left(x+\zeta^{i}_{ \pm}(t)\right)\right]=o(1), \text { as } t \rightarrow+\infty
$$

uniformly in $x \in\left(-t^{\gamma}, t^{\gamma}\right)$, with

$$
\zeta^{i}_{ \pm}(t)=- \ln \left(\alpha^{i}_{\infty} \pm  \varepsilon\right)+O\left(t^{- \gamma}\right)
$$

Because $\varepsilon>0$ is arbitrary, we have

\begin{equation} \label{convergencetothewaveequation}
\lim _{t \rightarrow+\infty}\left( \tilde{v^{i}}(t, x)-U_{c_{*}}\left(x+x_{\infty}^{i}\right)\right)=0
\end{equation}

with $x_{\infty}^{i}=-\ln(\alpha^{i}_{\infty})$, uniformly on compact sets. This completes the proof of Theorem \ref{convergencetothewavecascading}.


\subsection{Connecting Theorem \ref{convergencetothewavecascading} with cascading branching Brownian motion}
Note that in particular, we have found the constant $$\alpha^{i}_\infty=  \frac{\alpha^{k-i}}{(k-i)!2\sqrt{\pi}}\int_{0}^{+\infty} \eta P^{k}_{\infty}(0,\eta) d \eta= \lim_{n \rightarrow \infty} \frac{\alpha^{k-i}}{(k-i)!2\sqrt{\pi}}\int_{0}^{+\infty} \eta p^{k}(\tau_{n},\eta) d \eta.$$

In fact, $\alpha^{k}_\infty= \lim_{n \rightarrow \infty} \frac{1}{2\sqrt{\pi}}\int_{0}^{+\infty} \eta p^{k}(\tau_{n},\eta) d \eta:=C^{*}$ is the constant for the single Fisher-KPP equation (by taking $\tau_n$ to be a further subsequence of the sequence $\tau_n$ of the $k$-th component in the proof), for which the front then has asymptotics $2t-\frac{3}{2}\ln(t)+\ln(\alpha^{k}_{\infty})$ as shown in \cite{convergencetothewave}.

We have thereby shown that $\alpha^{i}_{\infty}=\frac{\alpha^{k-i}}{(k-i)!}\alpha^{k}_{\infty}$. In particular, connecting this to the cascading branching Brownian motion through \eqref{cdfcascadingbbmequation}, this proves the conjecture in \cite{belloumpaper}, regarding the convergence in distribution of the centered maximum particle. We denote the maximum particle location of the cascading branching Brownian motion by $M_{t}=\max _{u \in \mathcal{N}_{t}} X_{u}(t)$, and $m(t)=2t-\frac{3}{2}\ln(t)+(k-1)\ln(t)$.

As shown by Lalley and Sellke in \cite{lalleysellke}, the probabilistic representation of the traveling wave is given by 
$
U_{c*}(x)=1-\mathbb{E} \exp \left(-e^{-x} Z_{\infty}\right) 
$, where $Z_{\infty}$ is the limit of the derivative martingale

$$
Z_{t}=\sum_{u \in N_{t}}\left(2 t-X_{u}(t)\right) e^{X_{u}(t)-2 t}.
$$

We have 
$$
\begin{aligned}
&U_{c_{*}}(x+x^{i}_{\infty})=1-\mathbb{E} \exp \left(-e^{-x-x^{i}_{\infty}} Z_{\infty}\right) = 1-\mathbb{E} \exp \left(-e^{-x+\ln(\alpha^{i}_{\infty})} Z_{\infty}\right)=  1-\mathbb{E} \exp \left(-e^{-x} \alpha^{i}_{\infty} Z_{\infty}\right)= \\
&=1-\mathbb{E} \exp \left(-e^{-x} \frac{\alpha^{k-i}}{(k-i)!}C^{*}Z_{\infty}\right) \text{ where } C^{*}=\alpha^{k}_{\infty} \text{ is the single Fisher-KPP equation constant.}
\end{aligned}
$$

Thus the convergence of $\tilde{v^{1}}$ to the traveling wave in \eqref{convergencetothewaveequation}, combined with \eqref{cdfcascadingbbmequation} implies for $M_{t}=\max _{u \in \mathcal{N}_{t}} X_{u}(t)$, the maximum particle of the cascading system:

$$\lim _{t \rightarrow \infty} \mathbb{P}\left(M_{t} \leq m(t) +x\right)=\mathbb{E} \exp \left(-e^{-x} \frac{\alpha^{k-1}}{(k-1)!}C^{*}Z_{\infty}\right)$$ for all $x \in \mathbb{R}$.

We thus obtain as conjectured in \cite{belloumpaper}, that $M_{t}-m(t)$ converges in distribution to a randomized Gumbel scaled by the constant $\frac{\alpha^{k-1}}{(k-1)!}C^{*}$ where $C^{*}$ is the constant of the single Fisher-KPP equation.

\section*{Acknowledgment}
The author is grateful to Lenya Ryzhik for his constant guidance and encouragement, and thanks Montie Avery, Cole Graham, and Matt Holzer for helpful discussions.

\printbibliography

\end{document}